
\documentclass{amsart}
\usepackage{amsmath,amsthm}
\usepackage{amsfonts,amssymb}

\usepackage{graphicx}
\usepackage{sidecap}
\usepackage{wrapfig}
\usepackage{ifpdf}

\hfuzz1pc 


\newtheorem{thm}{Theorem}[section]

\newtheorem{prop}[thm]{Proposition}

\newtheorem{defn}[thm]{Definition}

\theoremstyle{remark}

\makeatletter
\newcommand{\bddots}{%
  \mathinner{\mkern1mu\raise\p@\vbox{\kern7\p@\hbox{.}}\mkern2mu
    \raise4\p@\hbox{.}\mkern2mu\raise7\p@\hbox{.}\mkern1mu}}
\makeatother

\def\a{{\alpha}}

\def\ab{{\mathbf a}}
\def\ib{{\mathbf i}}

\def\kb{{\mathbf k}}

\def\jb{{\mathbf j}}
\def\kb{{\mathbf k}}
\def\lb{{\mathbf l}}

\def\sb{{\mathbf s}}
\def\tb{{\mathbf t}}

 \def\NN{{\mathbb N}}
 \def\CA{{\mathcal A}}
 
 \def\CP{{\mathcal P}}
 \def\CK{{\mathcal K}}
 \def\CI{{\mathcal I}}
 \def\CL{{\mathcal L}}
 \def\CT{{\mathcal T}}
 \def\CTC{{\mathcal T\!C}}
 \def\CTS{{\mathcal T\!S}}
 
 \def\CH{{\mathcal H}}
 
 \def\HH{{\mathbb H}}
 \def\KK{{\mathbb K}}

 \def\RR{{\mathbb R}}
 \def\ZZ{{\mathbb Z}}
 \def\A{{\mathcal A}}
 
 \def\S{{\mathcal S}}
\newcommand{\e}{\mathrm{e}}
\newcommand{\ve}{\mathrm{v}}
\newcommand{\tr}{{\mathsf {tr}}}
\newcommand{\TC}{{\mathsf {TC}}}
\newcommand{\TS}{{\mathsf {TS}}}

 \def\sign{\operatorname{sign}}
 
 \def\sspan{\operatorname{span}}
\def \la {\langle}
\def \ra {\rangle}

\newcommand{\wh}{\widehat}

 \ifpdf
 \DeclareGraphicsExtensions{.pdf,.png,.jpg}
 \else
 \DeclareGraphicsExtensions{.eps}
 \fi
 \graphicspath{{images/}}

\begin{document}

\title[Discrete Fourier analysis with lattice]
{Discrete Fourier analysis with lattices on planar domains}

\author{Huiyuan Li}
\address{Institute of Software\\
Chinese Academy of Sciences\\ Beijing 100190, China}
\email{hynli@mail.rdcps.ac.cn}
\author{Jiachang Sun}
\address{Institute of Software\\
Chinese Academy of Sciences\\ Beijing 100190, China}
\email{sun@mail.rdcps.ac.cn}
\author{Yuan Xu}
\address{  Department of Mathematics\\ University of Oregon\\
    Eugene, Oregon 97403-1222.}
\email{yuan@math.uoregon.edu}

\begin{abstract}
A discrete Fourier analysis associated with translation lattices is developed
recently by the authors. It permits two lattices, one determining the integral
domain and the other determining the family of exponential functions. Possible
choices of lattices are discussed in the case of lattices that tile $\RR^2$ and
several new results on cubature and interpolation by trigonometric, as well
as algebraic, polynomials are obtained.
\end{abstract}

\date{\today}
\keywords{Discrete Fourier series, trigonometric, lattices, cubature,
interpolation}
\subjclass{41A05, 41A10}
\thanks{The first  authors was supported by NSFC Grants
10601056 and 10971212. The second author was supported by NSFC Grant 60970089 and  National Basic
Research Program Grant 2005CB321702.  }

\maketitle

\section{Introduction}
\setcounter{equation}{0}

A framework of discrete Fourier analysis associated with translation
tiling was developed recently in \cite{LSX1}, based on the principle that
if $\Omega$ is a bounded open set that tiles $\RR^d$ with the lattice
$L= A\ZZ^d$,  then the family of exponentials $\{\e^{ 2\pi i \alpha \cdot x} :
\alpha \in L^\perp\}$, where $L^\perp = A^{-\tr}\ZZ^d$ is the dual lattice
of $L$, forms an orthonormal basis in $L^2(\Omega)$ (\cite{F}). Our
set up permits two lattices, one determining the integral domain and the
other determining the exponentials that are orthogonal under the discrete
inner product. The case that both lattices have the regular hexagon as
fundamental domain was studied in \cite{LSX1} to illustrate the main set up,
which leads to new cubature formula and Lagrange interpolation for
trigonometric polynomials on hexagonal domains and equilateral triangles,
as well as results for algebraic polynomials on the region bounded by
Steiner's hypocycloid. This is extended to three dimension in \cite{LX1},
giving results on cubature and interpolation on the rhombic dodecahedron
and the tetrahedron, and further extended to $\RR^d$ in \cite{LX2} for
$\CA_d$ type lattice. In \cite{LSX2}, the two lattices are chosen differently
with fundamental domains being a square and a rhombic (rotation of the
square by $90^\circ$), respectively. The choice leads to, surprisingly, one
family of minimal cubature for product Chebyshev weight on $[-1,1]^2$, first
discovered by working with common zeros of orthogonal polynomials of
two variables.  An extension to three dimension gives a family of cubature
formulas on the cube that have the smallest number of nodes among all
known formulas, which coincides, rather surprisingly, with the cubature
discovered in \cite{MVX} by a totally different method.

The two lattices in \cite{LSX2} were chosen for the purpose of obtaining
algebraic cubature formulas on the square. Its success prompts us to ask
what other choices are possible. In the present paper, we try to answer this
question in the case of $\RR^2$. Up to affine transforms, there are essentially
two types of translation tiling in $\RR^2$ with fundamental domain being
either squares or regular hexagons. Their combinations in our discrete
Fourier analysis, however, yield several distinct cases, including several cases
not covered in our previous studies. One that is of particular interesting has
one tiling sets as the regular hexagon and the other as the rotation of the
regular hexagon by $90^\circ$ (see Section 3.5), which leads to another
set of cubature and interpolation on the equilateral triangle, different from
those obtained in \cite{LSX1}. In order to present the main idea without
being overwhelmed by notations and numerous formulas,  we shall work mostly
with cubature formulas, a central part but by no means all of discrete Fourier
analysis, unless other results are deemed novel enough to warrant
inclusion.

The paper is organized as follows. In the following section we recall the
framework developed in \cite{LSX1} and use it to treat the classical product
discrete Fourier analysis on the plane, which illustrates well what can be
expected in the non-classical settings. Section 3 is divided into a number
of subsections, each deals with one distinct choice of two lattices.

\section{Discrete Fourier analysis with lattice}
\setcounter{equation}{0}

In the first subsection, we give a succinct recount of the framework of discrete
Fourier analysis with tiling in \cite{LSX1}. We refer to \cite{CS} for lattices, tiling
and various related topics, and refer to \cite{DM, Ma} for some applications
of discrete Fourier analysis in several variables. In the second subsection, we
illustrate the general theory by using it to recover the classical product discrete
Fourier analysis on the square.

\subsection{Discrete Fourier analysis}
A lattice $L$ in $\RR^d$ is a discrete subgroup $L =  L_A := A\ZZ^d$, where
$A$, called a generator matrix, is nonsingular. A bounded set $\Omega$
of $\RR^d$, called the fundamental domain of $L$, is said to tile
$\RR^d$ with the lattice $L$ if
$$
  \sum_{\alpha \in L} \chi_\Omega (x + \alpha) = 1, \qquad
      \hbox{for almost all $x \in \RR^d$},
$$
where $\chi_\Omega$ denotes the characteristic function of $\Omega$. We
write this as $\Omega + L = \RR^d$.  For a given lattice $L_A$, the
dual lattice $L_A^\perp$ is given by $L_A^\perp =  A^{- \tr}\ZZ^d$.
According to a result of Fuglede \cite{F}, a bounded open set $\Omega$ tiles
$\RR^d$ with the lattice $L$ if, and only, $\{\e^{2 \pi i \alpha \cdot x}: \alpha
\in L^\perp\}$ is an orthonormal basis with respect to the inner product
\begin{equation} \label{ipOmega}
  \langle f, g \rangle_\Omega = \frac{1}{\mathrm{mes}(\Omega)}
      \int_\Omega f(x) \overline{g(x)} dx.
\end{equation}
For $L = A\ZZ^d$, the measure of $\Omega$ is equal to $|\det (A)|$. Since
$L^\perp_A = A^{-\tr}\ZZ^d$, we can write $\alpha = A^{-\tr} k$ for $\alpha \in
L_A^\perp$ and $k \in \ZZ^d$, so that $\e^{2\pi i \alpha \cdot x}
= \e^{2 \pi i k^{\tr} A^{-1} x}$.

For our discrete Fourier analysis, the boundary of $\Omega$ matters. We shall
fix an $\Omega$ such that $0 \in \Omega$ and  $\Omega + A\ZZ^d =  \RR^d$
holds {\it pointwisely} and {\it without overlapping}.

\begin{defn} \label{def:N}
Let  $\Omega_A$ and $\Omega_B$ be the fundamental domains of $A\ZZ^d$
and $B\ZZ^d$, respectively. Assume {\it all entries of the matrix
$N:=B^\tr A$ are integers}.
Define
$$
\Lambda_N :=   \{ k \in \ZZ^d: B^{-\tr}k \in \Omega_A\} \quad \hbox{and} \quad
\Lambda_{N}^\dag:=   \{ k \in \ZZ^d: A^{-\tr}k \in \Omega_B\}.
$$
Furthermore, define the finite dimensional subspace of exponential functions
$$
\CH_N := \sspan \left\{\e^{2\pi i\, k^\tr A^{-1} x}, \, k \in \Lambda_{N}^\dag \right \}.
$$
\end{defn}

The main result in the discrete Fourier analysis is the following theorem:

\begin{thm} \label{thm:df}
Let $A, B$ and $N$ be as in Definition \ref{def:N}.  Define
$$
    \langle f, g \rangle_N =  \frac{1}{|\det (N)|}
        \sum_{j \in \Lambda_N } f(B^{-\tr} j ) \overline{g(B^{-\tr} j )}
$$
for $f, g$ in $C(\Omega_A)$, the space of continuous functions on $\Omega_A$.
Then
\begin{equation}\label{c-d-inner}
    \langle f, g \rangle_{\Omega_A} =  \langle f, g \rangle_{N}, \qquad f, g \in \CH_N.
\end{equation}
\end{thm}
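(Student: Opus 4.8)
The plan is to establish \eqref{c-d-inner} by verifying it on a spanning set of $\CH_N$ and then invoking bilinearity. Since $\CH_N = \sspan\{\e^{2\pi i\, k^\tr A^{-1}x} : k \in \Lambda_N^\dag\}$, it suffices to show that for any two multi-indices $k, l \in \Lambda_N^\dag$ one has
$$
\la \e^{2\pi i\, k^\tr A^{-1}x},\, \e^{2\pi i\, l^\tr A^{-1}x}\ra_{\Omega_A}
 = \la \e^{2\pi i\, k^\tr A^{-1}x},\, \e^{2\pi i\, l^\tr A^{-1}x}\ra_N.
$$
By Fuglede's theorem (applied to $\Omega_A$ tiling with $A\ZZ^d$), the left-hand side equals $1$ if $k = l$ and $0$ otherwise, because $k^\tr A^{-1}x - l^\tr A^{-1}x = (A^{-\tr}(k-l))\cdot x$ with $A^{-\tr}(k-l) \in L_A^\perp$. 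So the real content is evaluating the discrete sum on the right: I must show
$$
\frac{1}{|\det N|}\sum_{j \in \Lambda_N} \e^{2\pi i\, (k-l)^\tr A^{-1} B^{-\tr} j}
 = \begin{cases} 1, & k = l,\\ 0, & k \neq l.\end{cases}
$$

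First I would simplify the exponent. Writing $m = k - l$, note $A^{-1}B^{-\tr} = (B^\tr A)^{-1} = N^{-1}$, so the summand is $\e^{2\pi i\, m^\tr N^{-1} j}$. Thus the claim reduces to the purely arithmetic identity
$$
\sum_{j \in \Lambda_N} \e^{2\pi i\, m^\tr N^{-1} j} = |\det N|\,\delta_{m,0}
$$
valid for the relevant range of $m$. The case $m = 0$ is the assertion that $|\Lambda_N| = |\det N|$, i.e. that the fundamental domain $\Omega_A$ of $A\ZZ^d$, when sampled at the finer lattice points $B^{-\tr}\ZZ^d$, contains exactly $|\det N|$ of them; this is a standard counting fact, since $\Omega_A$ tiles $\RR^d$ under $A\ZZ^d$ pointwise without overlap and $[B^{-\tr}\ZZ^d : (B^{-\tr}N^\tr)\ZZ^d]\cdots$ — more directly, $B^{-\tr}\ZZ^d / A\ZZ^d \cong \ZZ^d / N^\tr \ZZ^d$ has order $|\det N|$, and $\Lambda_N$ is a complete set of coset representatives because $\Omega_A$ is a fundamental domain. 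For $m \neq 0$, I would argue that $\Lambda_N$ is (in bijection with) a full set of coset representatives of $\ZZ^d/N^\tr\ZZ^d$, so $j \mapsto \e^{2\pi i\, m^\tr N^{-1}j}$ is a character of this finite abelian group; summing a nontrivial character over the group gives $0$. Nontriviality is where the hypothesis $k, l \in \Lambda_N^\dag$ enters: one must check $m^\tr N^{-1} \notin \ZZ^d$ (equivalently $N^{-\tr} m \notin \ZZ^d$) whenever $m = k-l \neq 0$ with $k,l \in \Lambda_N^\dag$. Here $N^{-\tr}m = A^{-\tr}B^{\tr}... $ wait — $N^{-\tr} = (B^\tr A)^{-\tr} = A^{-\tr}B^{-1}$... the point is that $A^{-\tr}k, A^{-\tr}l \in \Omega_B$, and if $N^{-\tr}m \in \ZZ^d$ then $A^{-\tr}k$ and $A^{-\tr}l$ would differ by an element of $B\ZZ^d$; since $\Omega_B$ tiles under $B\ZZ^d$ without overlap, that forces $k = l$, a contradiction.

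The main obstacle is the careful bookkeeping around the two "dual" index sets $\Lambda_N$ and $\Lambda_N^\dag$ and the transposes: one must pin down precisely why $\Lambda_N$ represents $\ZZ^d/N^\tr\ZZ^d$ while the nonvanishing of the character is controlled by membership in $\Lambda_N^\dag$ (which governs $\Omega_B$, not $\Omega_A$). The interplay $N = B^\tr A$, $N^{-1} = A^{-1}B^{-\tr}$, $N^{-\tr} = A^{-\tr}B^{-1}$ must be used consistently, and the "pointwise, without overlap" normalization of both fundamental domains is exactly what makes the coset-representative claims exact rather than almost-everywhere. Once these two arithmetic facts — $|\Lambda_N| = |\det N|$ and nontriviality of the character for $m \neq 0$ — are in hand, the orthogonality of characters on the finite group $\ZZ^d/N^\tr\ZZ^d$ finishes the proof, and bilinearity extends \eqref{c-d-inner} from the spanning exponentials to all of $\CH_N$.
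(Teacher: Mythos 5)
Your overall strategy is the right one, and it is essentially the argument the paper relies on: the paper itself states Theorem~\ref{thm:df} without proof, deferring to \cite{LSX1}, and the proof there is exactly this reduction to a character sum over a finite quotient group, combined with Fuglede's orthonormality for the continuous inner product. The sesquilinearity reduction to pairs of exponentials, the identity $A^{-1}B^{-\tr}=N^{-1}$, and the use of the ``pointwise, without overlap'' tiling of $\Omega_B$ to show the character is nontrivial when $k\neq l$ in $\Lambda_N^\dag$ are all correct and are the essential content.

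Two transpose slips in your bookkeeping should be fixed, since this is precisely the delicate part. First, $\Lambda_N$ is a complete set of coset representatives of $\ZZ^d/N\ZZ^d$, not of $\ZZ^d/N^{\tr}\ZZ^d$: for $k,k'\in\ZZ^d$ one has $B^{-\tr}k-B^{-\tr}k'\in A\ZZ^d$ if and only if $k-k'\in B^{\tr}A\ZZ^d=N\ZZ^d$, and the pointwise non-overlapping tiling of $\Omega_A$ then gives exactly one representative per coset. This matters beyond counting: the map $j\mapsto \e^{2\pi i\, m^{\tr}N^{-1}j}$ descends to a character of $\ZZ^d/N\ZZ^d$ (since $m^{\tr}N^{-1}(Nu)=m^{\tr}u\in\ZZ$), but it is \emph{not} well defined on $\ZZ^d/N^{\tr}\ZZ^d$ in general, so as literally written your character-orthogonality step would not parse. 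Second, $N^{-\tr}=(A^{\tr}B)^{-1}=B^{-1}A^{-\tr}$, not $A^{-\tr}B^{-1}$; with the correct order, $N^{-\tr}(k-l)\in\ZZ^d$ does indeed say $A^{-\tr}k-A^{-\tr}l\in B\ZZ^d$, which contradicts $A^{-\tr}k,A^{-\tr}l\in\Omega_B$ unless $k=l$ --- the conclusion you drew is right, so this is only a typo, but the first slip should be corrected explicitly. The resulting duality (sum over $\ZZ^d/N\ZZ^d$, triviality governed by $m\in N^{\tr}\ZZ^d$, i.e.\ by $\Lambda_N^\dag$ and $\Omega_B$) is exactly the mechanism in \cite{LSX1}. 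With these repairs the proof is complete and matches the cited one.
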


It follows readily that \eqref{c-d-inner} gives a cubature formula exact for
functions in $\CH_N$. Furthermore, it also implies a Lagrange interpolation
by exponential functions. Let $\CI_N f$ denote the Fourier expansion of $f$
in $\CH_N$ with respect to the inner product $\la \cdot, \cdot \ra_N$, which
can be expressed as
\begin{equation} \label{interpolation}
 \CI_N f (x) = \sum_{k \in \Lambda_N} f(B^{-\tr}k)\Psi_{\Omega_B}^A(x-B^{-\tr}k),
 \qquad f \in C(\Omega_A),
\end{equation}
where
\begin{equation} \label{ell}
  \Psi_{\Omega_B}^A (x) = \frac{1}{|\det (N)|} \sum_{j \in \Lambda_{N}^{\dag}}
       \e^{2 \pi i  j^\tr A^{-1} x}.
\end{equation}

\begin{thm} \label{prop:interpolation}
Let $A, B$ and $N$ be as in Definition \ref{def:N}. Then
$\CI_N $
is the unique
interpolation operator
on $N$ in $\CH_N$; that is,
$$
    \CI_N f (B^{-\tr}j ) = f (B^{-\tr}j),\qquad  \forall j \in \Lambda_N.
$$
\end{thm}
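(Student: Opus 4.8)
The plan is to establish the interpolation property directly by unwinding the definitions, then deduce uniqueness from a dimension count. First I would verify that $\CI_N f$ actually lies in $\CH_N$: by \eqref{ell}, $\Psi_{\Omega_B}^A$ is a linear combination of the exponentials $\e^{2\pi i j^\tr A^{-1} x}$ with $j \in \Lambda_N^\dag$, and a translation $x \mapsto x - B^{-\tr} k$ only multiplies each such exponential by the constant $\e^{-2\pi i j^\tr A^{-1} B^{-\tr} k}$; hence every summand of \eqref{interpolation}, and therefore $\CI_N f$ itself, belongs to $\CH_N$. Next I would compute $\CI_N f (B^{-\tr} j)$ for $j \in \Lambda_N$. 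Substituting $x = B^{-\tr} j$ into \eqref{interpolation} and \eqref{ell} gives
$$
  \CI_N f (B^{-\tr} j) = \sum_{k \in \Lambda_N} f(B^{-\tr} k)\, \frac{1}{|\det(N)|}
     \sum_{\ell \in \Lambda_N^\dag} \e^{2\pi i \ell^\tr A^{-1} B^{-\tr}(j-k)}.
$$
The key identity is that the inner sum equals $|\det(N)|$ when $j \equiv k$ and vanishes otherwise; since $\Lambda_N \subset \Omega_A$-representatives of $\ZZ^d/N\ZZ^d$ are distinct, $j \equiv k$ forces $j = k$, and the right-hand side collapses to $f(B^{-\tr} j)$.

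The heart of the matter is therefore the exponential sum identity
$$
  \frac{1}{|\det(N)|} \sum_{\ell \in \Lambda_N^\dag} \e^{2\pi i \ell^\tr A^{-1} B^{-\tr} m}
     = \begin{cases} 1, & m \equiv 0 \!\!\mod N\ZZ^d,\\ 0, & \text{otherwise},\end{cases}
$$
for $m = j - k$ ranging over differences of elements of $\Lambda_N$. I would prove this by recognizing $A^{-1}B^{-\tr} = (B^\tr A)^{-1} = N^{-1}$, so the exponent is $2\pi i\, \ell^\tr N^{-1} m$; the sum is then the standard orthogonality relation for characters of the finite abelian group $\ZZ^d/N\ZZ^d$, using that $\Lambda_N^\dag$ is (by Definition \ref{def:N} and the tiling property of $\Omega_B$) a complete set of coset representatives for $\ZZ^d/N^\tr\ZZ^d$, which has the same cardinality $|\det(N)|$. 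This orthogonality is exactly the discrete inner product content of Theorem \ref{thm:df} applied to the exponential basis, so I could alternatively cite \eqref{c-d-inner} with $f, g$ ranging over $\{\e^{2\pi i \ell^\tr A^{-1} x}\}_{\ell \in \Lambda_N^\dag}$ to get that these functions are orthonormal under $\la\cdot,\cdot\ra_N$, and then observe that $\Psi_{\Omega_B}^A$ is precisely the reproducing kernel of $\CH_N$ for this inner product, whence the interpolation property is immediate.

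For uniqueness, I would note that $\dim \CH_N = |\Lambda_N^\dag| = |\det(N)| = |\Lambda_N|$, the last equality again from the tiling hypotheses (both $\Lambda_N$ and $\Lambda_N^\dag$ index complete residue systems mod $N\ZZ^d$ and mod $N^\tr\ZZ^d$ respectively, and $|\det N| = |\det N^\tr|$). Thus $\CH_N$ is a space of functions whose dimension equals the number of interpolation nodes $\{B^{-\tr} j : j \in \Lambda_N\}$, and the evaluation map $\CH_N \to \CC^{\Lambda_N}$ is injective precisely because the kernel identity above shows the Gram-type matrix $(\e^{2\pi i \ell^\tr N^{-1} j})_{\ell \in \Lambda_N^\dag,\, j \in \Lambda_N}$ is (a scalar multiple of) a unitary matrix, hence invertible; an injective linear map between spaces of equal finite dimension is bijective, so the interpolation problem has a unique solution, which must be $\CI_N f$. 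The main obstacle I anticipate is bookkeeping the two dual residue systems correctly — making sure that the roles of $N$ and $N^\tr$, and of $\Omega_A$ and $\Omega_B$, are not transposed — and confirming that the ``pointwise, without overlapping'' tiling convention fixed before Definition \ref{def:N} really does make $\Lambda_N$ and $\Lambda_N^\dag$ exact transversals rather than over- or under-counting boundary lattice points.
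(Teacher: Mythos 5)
Your proposal is correct and is essentially the intended argument: the paper states Theorem~\ref{prop:interpolation} without proof (it is recalled from \cite{LSX1}), and the proof there is exactly your route --- rewrite $\Psi_{\Omega_B}^A(B^{-\tr}(j-k))$ as the character sum $\frac{1}{|\det N|}\sum_{\ell\in\Lambda_N^\dag}\e^{2\pi i \ell^\tr N^{-1}(j-k)}$, use that $\Lambda_N^\dag$ and $\Lambda_N$ are exact transversals of $\ZZ^d/N^\tr\ZZ^d$ and $\ZZ^d/N\ZZ^d$ (equivalently, the orthonormality of the exponentials under $\la\cdot,\cdot\ra_N$ from Theorem~\ref{thm:df} together with the fact that a square matrix with orthonormal columns is unitary), and conclude uniqueness from $\#\Lambda_N=\#\Lambda_N^\dag=|\det N|=\dim\CH_N$. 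Your bookkeeping of the roles of $N$ versus $N^\tr$ and of the pointwise, non-overlapping tiling convention is exactly the point that needs care, and you have it right.
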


In particular, $\# \Lambda_N = \# \Lambda_{N^\tr} = |\det(N)|$. The cubature
formula and the Lagrange interpolation are for functions that are periodic with
respect to the lattice $A \ZZ^d$, which are functions satisfying
$$
      f (x + A k) = f(x) \qquad \hbox{for all $k \in \ZZ^d$}.
$$
The function $x \mapsto \e^{2\pi i k^\tr A^{-1} x}$ is periodic with respect to
the lattice $A\ZZ^d$.

\subsection{Classical discrete Fourier analysis} We deduce the classical result
on the plane (cf. \cite{DM, Z}) from the general theory described above. As mentioned in the
introduction, we shall limit our consideration to cubature formulas. The result
hints at what is possible in the non-classical cases in the rest of the paper.

For $n \in \NN$, let $A = I$, the identity matrix, and $B = 2n I$. Then $N = B^\tr A = 2n I$
has all integer entries. Let $\Omega_A = [-\frac12, \frac12)^2$, which tiles $\RR^2$
with $\ZZ^2$ pointwisely and without overlapping. We shall write
$\Lambda_n$, $\Lambda_n^\dag$, $\CH_n$ in place of $\Lambda_N$,
$\Lambda_N^\dag$, $\CH_N$. Then
$$
\Lambda_n = \Lambda_n^\dag =\{k \in \ZZ^2: k \in [-n,n)^2 \}  \quad \hbox{and} \quad
\CH_n = \sspan\{\e^{2\pi i k \cdot x}: k\in \Lambda_n^\dag \}.
$$
It is clear that $\# \Lambda_n = (2n)^2$. The equation \eqref{c-d-inner} in this setting becomes
\begin{equation} \label{classical_ip}
\int_{[-\frac12,\frac12]^2} f(x) \overline{g(x)}dx = \frac{1}{4n^2}
     \sum_{k_1=-n}^{n-1}\sum_{k_2=-n}^{n-1}
   f(\tfrac{k_1}{2n}, \tfrac{k_2}{2n}) \overline{g(\tfrac{k_1}{2n}, \tfrac{k_2}{2n})}, \quad
    f, g \in \CH_n.
\end{equation}
To illustrate what can be done on cubature, we state the results in stages.

{\bf Stage 1.} It is easy to see that \eqref{classical_ip} yields a cubature formula
\begin{align} \label{cuba-SS1}
 \int_{[-\frac12,\frac12]^2} f(x) dx = \frac{1}{4 n^2} \sum_{k_1=-n}^{n-1}\sum_{k_2=-n}^{n-1}
     f(\tfrac{k_1}{2n}, \tfrac{k_2}{2n}), \qquad \forall f  \in \CH_{2n-1}^*,
\end{align}
where $\CH_n^*: = \sspan \{ \e^{2 \pi i k \cdot x}: k \in [-n,n]^2\cap \ZZ^2\}$.
The set of nodes of cubature \eqref{cuba-SS1} is not symmetric on $[-\frac12,\frac12]^2$,
since it has points on only part of the boundary of the square. We would like to
have cubature whose nodes is symmetric on the square.

{\bf Stage 2.} We construct cubature formulas for $\CH_{2n-1}^*$ that have symmetric
nodes on the square. Such a cubature is invariant under sign changes in both variables.
We can in fact obtain two such formulas from \eqref{cuba-SS1}. The first one is obtained
upon using the periodicity of the functions in the sums in the right hand side,
\begin{align} \label{cuba-SS2a}
  \int_{[-\frac12,\frac12]^2} f(x) dx = \frac{1}{4 n^2} \sum_{k_1=-n}^{n}\sum_{k_2=-n}^{n}
    c_{k,n} f(\tfrac{k_1}{2n}, \tfrac{k_2}{2n}), \qquad \forall f  \in \CH_{2n-1}^*,
\end{align}
where $c_{k,n} = 1$ if $k \in (-n,n)^2$, $c_{k,n} = 1/2$ if either $k_1 = \pm n$
or $k_2 = \pm n$ but not both, and $c_{k,n} = 1/4$ if $k = (\pm n, \pm n)$. The
second one is obtained by applying \eqref{cuba-SS1} to the function
$f(\cdot + \frac{1}{4n})$ and using the periodicity of $f$ in the integral,
\begin{align} \label{cuba-SS2b}
  \int_{[-\frac12,\frac12]^2} f(x) dx = \frac{1}{4 n^2} \sum_{k_1=-n}^{n-1}\sum_{k_2=-n}^{n-1}
      f(\tfrac{k_1+\frac12}{2n}, \tfrac{k_2+\frac12}{2n}), \qquad f  \in \CH_{2n-1}^*.
\end{align}

The fact that the set of nodes in either \eqref{cuba-SS2a} or \eqref{cuba-SS2b}
is invariant under the group $\ZZ_2^2$ (sign changes) allows us to derive cubature
formulas for product cosine and produce sine functions.
Let $\CTC_n : = \sspan \{\cos 2 \pi k_1 x_1 \cos 2\pi k_2 x_2:  0 \le k_1,k_2 \le n\}$
and $\CTS_n : = \sspan \{\sin 2 \pi k_1 x_1 \sin 2\pi k_2 x_2:  1 \le k_1,k_2 \le n\}$,
which consist of functions in $\CH_n^*$ that are invariant or anti-invariant under $\ZZ_2^2$,
respectively.

{\bf Stage 3.}  Restricting \eqref{cuba-SS2b} to $\CTC_{2n-1}$, we obtain a trigonometric
cubature,
\begin{equation}\label{cuba-SS3}
\int_{[0,\frac12]^2} f(x) dx = \frac{1}{ 4n^2} \sum_{k_1=0}^{n-1} \sum_{k_2=0}^{n-1}
    f(\tfrac{k_1+\frac12}{2n}, \tfrac{k_2+\frac12}{2n}), \qquad  \forall f  \in \CTC_{2n-1},
\end{equation}
whereas restricting \eqref{cuba-SS2a} to $\CTC_{2n-1}$ gives another trigonometric
cubature for $\CTC_{2n-1}$. Furthermore, restricting \eqref{cuba-SS2a} or
\eqref{cuba-SS2b} to $\CTS_{2n-1}$ leads to cubature for $\CTS_{2n-1}$.

The Chebyshev polynomials of the first and the second kind are defined, respectively,
by $T_n(\xi)=\cos 2\pi n \theta$ and $U_n(\xi)= \sin 2 \pi (n+1) \theta /\sin 2\pi \theta$, where
$\xi = \cos 2 \pi \theta$ with $0 \le \theta \le 1/2$. These are orthogonal polynomials with respect
to $w_0(\xi):= (1-\xi^2)^{-1/2}$ and $w_1(\xi):=(1-\xi^2)^{1/2}$, respectively, on $[-1,1]$.
Consequently, under the change of variables
\begin{equation}\label{SS:x-y}
       (x_1,x_2) \mapsto  (y_1, y_2) =  (\cos 2 \pi x_1,\cos 2 \pi x_2) \in [-1,1]^2,
\end{equation}
the space $\CTC_n$ is mapped into the product space $\Pi_n \times \Pi_n$, where
$\Pi_n$ denotes the space of algebraic polynomials of one variable, and $\CTS_n$
is mapped into $\Pi_{n-1} \times \Pi_{n-1}$.

{\bf Stage 4.} Under the map $x \mapsto y$ of \eqref{SS:x-y}, the cubature
\eqref{cuba-SS3} becomes
\begin{equation}\label{cuba-SS4}
     \frac{1}{\pi^2}\int_{[-1,1]^2} f(y)W_0(y) dy  =
    \frac{1}{ n^2} \sum_{k_1=0}^{n-1} \sum_{k_2=0}^{n-1}
        f(\cos \tfrac{\pi (2k_1+1)}{2 n}, \cos \tfrac{ \pi (2k_2+1)}{2n}),
\end{equation}
for $f  \in \Pi_{2n-1} \times \Pi_{2n-1}$, where $W_0(y) =w_0(y_1)w_0(y_2)$, which
is in fact the product Chebyshev-Gauss cubature of the first kind. Applying the same
procedure on the cubature \eqref{cuba-SS2a}, we obtain the product
Chebyshev-Gauss-Lobatto cubature. Furthermore, if we apply this procedure on
the cubature for $\CTS_{2n-1}$ that were mentioned in Stage 3, we obtain the
product cubature for the product Chebyshev weight $W_1(y) = w_1(y_1)w_1(y_2)$
of the second kind.


\section{Discrete Fourier analysis on planer domains}
\setcounter{equation}{0}

We now apply the general theory in Section 2.1 on the non-classical choices
of lattices. The guideline of our choices is the program that we outlined for the
classical case in subsection 2.2. We list the cases according to the shapes
of the fundamental domains of lattices. For example, the classical case in Section 2.2
is Square-Square. The main ones that we consider are the regular domains such
as square, rhombus, and regular hexagon, which are depicted below.
\begin{figure}[h]
\centering
\includegraphics[width=0.3\textwidth]{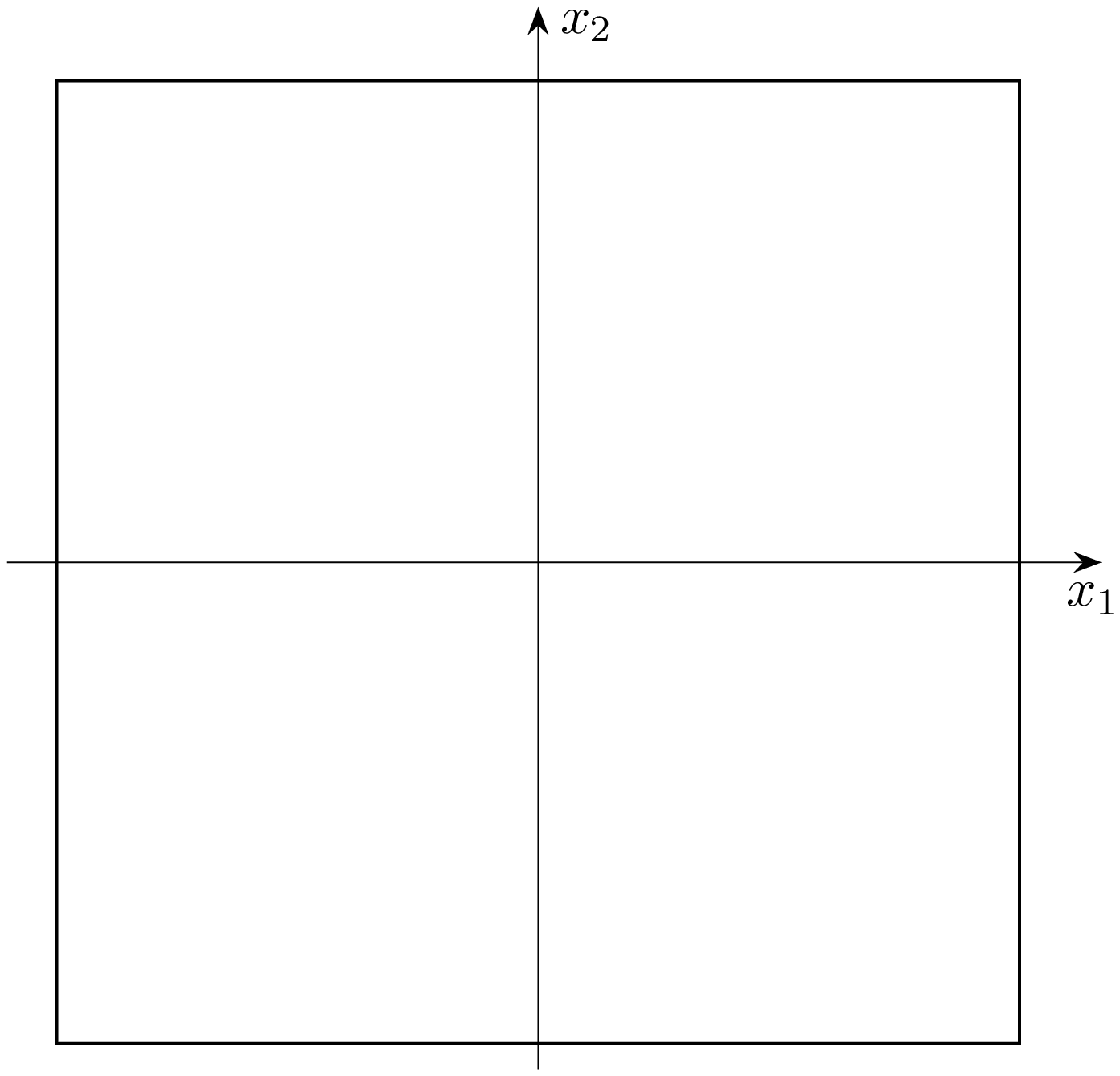}
\includegraphics[width=0.3\textwidth]{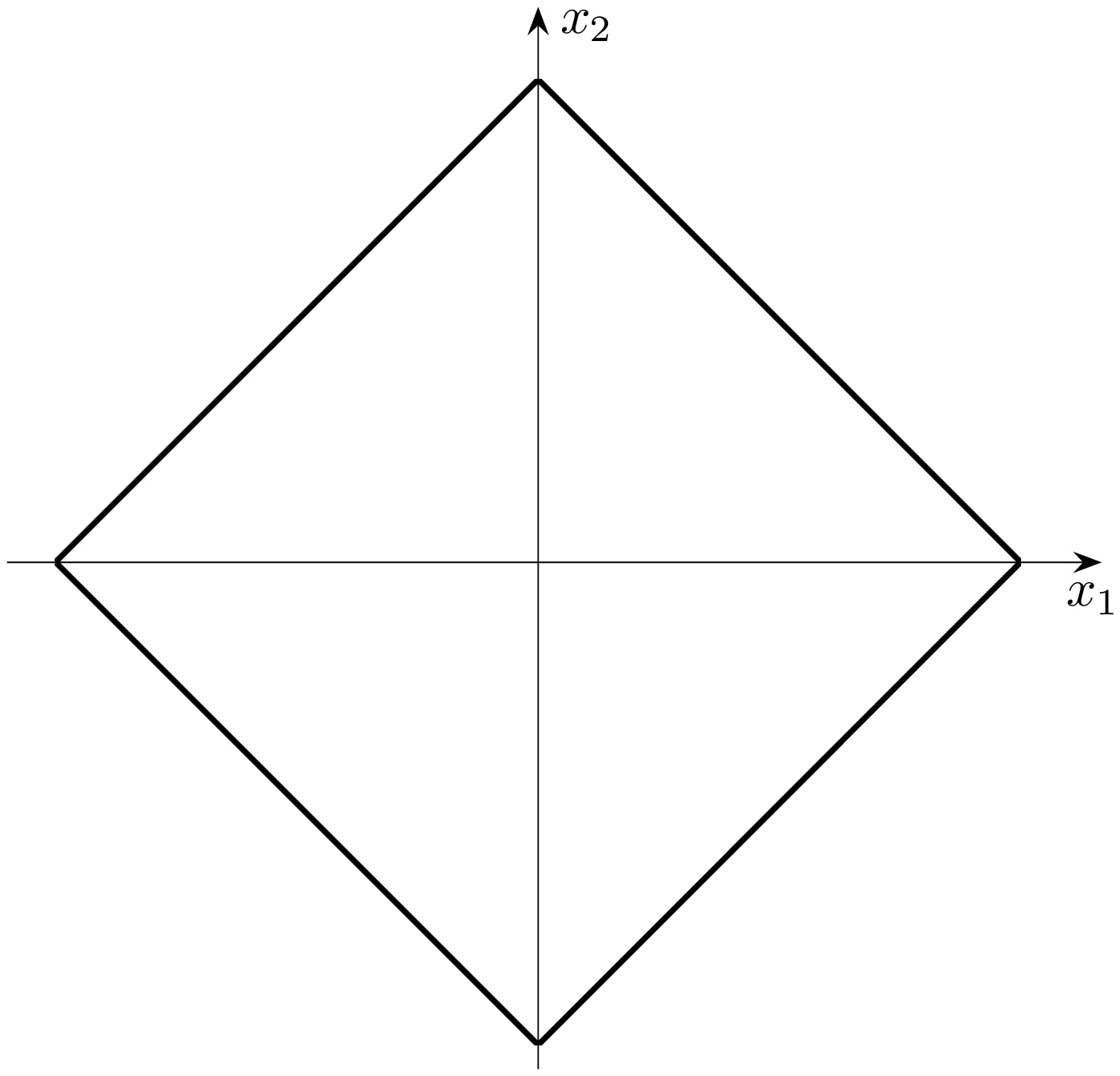}
\includegraphics[width=0.3\textwidth]{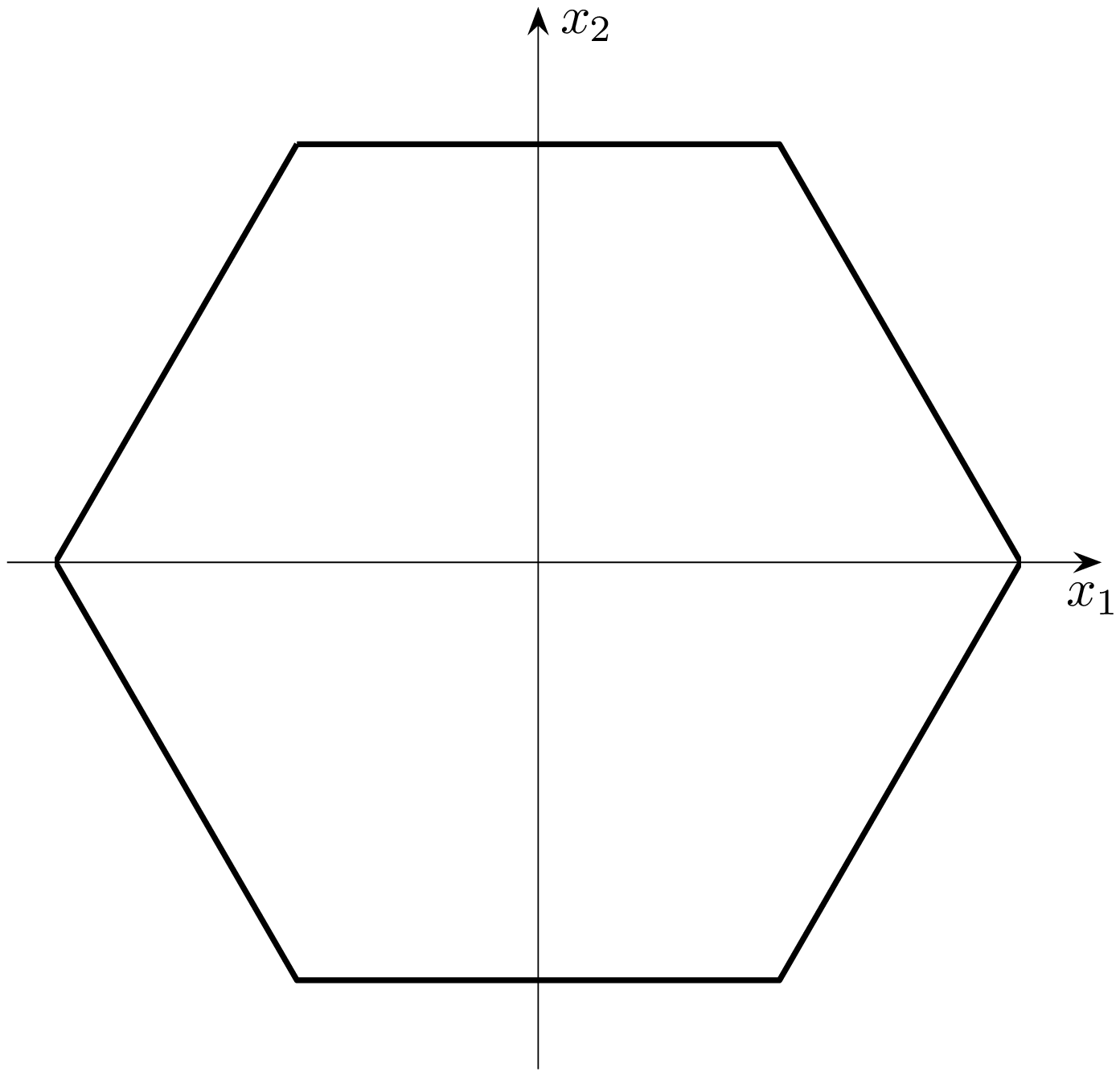}
\caption{Square, rhombus, regular hexagon}
\end{figure}
\subsection{Square-Rhombus} In this case we choose $A = I$ with $\Omega_A
= [ -\frac12, \frac 12)^2$ being the square and choose $B = n R$, where $R \ZZ^2$
has rhombic as its fundamental domain,
$$
 R: =   \left[ \begin{matrix} 1 & 1\\ -1 & 1 \end{matrix} \right] \quad
  \hbox{with}\quad  \Omega_B = \{x \in \RR^2 : -n \le x_1+x_2 < n, \,\, -n \le x_2 - x_1 < n\}.
$$
This case was studied in \cite{LSX2}. We shall be brief. Here $\Lambda_N =
\Lambda_N^\dag  = :\Lambda_n$, where
$$
 \Lambda_n  =  \{j \in \ZZ^2:  -n \le j_2 \pm j_1 <n\} \quad\hbox{and}\quad
    \Lambda_n^*: =  \{j \in \ZZ^2:  -n \le j_2 \pm j_1 \le n\}.
$$
The set $\Lambda_n$ is not symmetric on $[-n,n]^2$ but $\Lambda_n^*$ is.
The cardinality of $\Lambda_n$ is $|\Lambda_n| = 2 n^2$. We follow the program in
Section 2.2: In Stage 1 we deduce a cubature from Theorem \ref{thm:df}, which
has nodes indexed by $\Lambda_n$, then in Stage 2 we derive a cubature by
periodicity that has nodes indexed by $\Lambda_n^*$. By considering functions that
are even in both variables, we deduce in Stage 3 a cubature for trigonometric polynomials,
which we state as follows.  Changing variables from $j$ to $k = 2n B^{-\tr}j$, or
$k_1 =j_1+j_2$ and $k_2 = j_2 - j_1$,  it follows easily that $j \in \Lambda_n^*$ is
equivalent to
$$
   k  \in X_n^* =
   \left \{ 2 k: - \tfrac{n}{2} \le k_1, k_2 \le \tfrac{n}{2}\}   \cup  \{ 2 k+1:
      - \tfrac{n+1}{2}  \le k_1, k_2 \le \tfrac{n-1}{2}\right\}\!.
$$
Let $X_n^\circ$, $X_n^e$ and $X_n^v$ denote the set of points in $X_n^*$ that lie
in the interior, the edges excluding corners, and the corners of $[-n,n]^2$, respectively.

Throughout the rest of the paper, we will adopt the convention that $X^\circ$, $X^e$
and $X^v$ are subsets of $X$ defined as above, whenever the domain to which the
interior, edges and corners relate to is clear.

\begin{thm} \label{thm:cuba1}
For $n \ge 2$, the cubature formula
\begin{equation} \label{cuba1}
   \int_{[-\frac12,\frac12]^2} f(x) dx =  \frac{1}{2n^2} \sum_{k \in X_n^*} c_k^{(n)} f(\tfrac{k}{2n})
     \quad\hbox{with} \quad
            c_k^{(n)} = \begin{cases} 1, & k \in X_n^\circ \\ \frac12, & k \in X_n^e \\
                                             \frac14, & k \in X_n^v   \end{cases}
\end{equation}
 is exact for $f \in \CT_{2n-1}^*$, where  $\CT_m^* : = \mathrm{span}
      \left\{\e^{2\pi i\, k\cdot x} : \ k \in \Lambda_{m}^* \right\}.$
\end{thm}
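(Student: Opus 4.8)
The plan is to follow the three-stage program laid out in the classical case of Section~2.2, specialized to the Square--Rhombus configuration. First I would invoke Theorem~\ref{thm:df} directly with $A = I$ and $B = nR$: since $\langle f, g\rangle_{\Omega_A} = \langle f, g\rangle_N$ for all $f, g \in \CH_N$, taking $g \equiv 1$ (which lies in $\CH_N$ since $0 \in \Lambda_N^\dag$) yields the raw cubature
\begin{equation*}
  \int_{[-\frac12,\frac12]^2} f(x)\, dx = \frac{1}{2n^2} \sum_{j \in \Lambda_n} f(B^{-\tr}j), \qquad f \in \CH_N,
\end{equation*}
using $|\det N| = |\det(nR)| = 2n^2$. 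Here the nodes $B^{-\tr}j = \frac{1}{2n}(j_1+j_2,\, j_2-j_1)$ are indexed by the non-symmetric set $\Lambda_n$. The content of Stage~1 is just unwinding Definition~\ref{def:N} to check that $\Lambda_N = \Lambda_n$ and $\Lambda_N^\dag = \Lambda_n$ as described, which is the routine linear-algebra computation $B^{-\tr}k \in \Omega_A \iff -n \le k_2 \pm k_1 < n$.

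Stage~2 replaces $\Lambda_n$ by the symmetric set $\Lambda_n^*$ at the cost of introducing the boundary weights. The point is that $f$ in the integrand is $\ZZ^2$-periodic (every exponential $\e^{2\pi i k\cdot x}$ with $k \in \ZZ^2$ is), so each node of $\Lambda_n$ lying on the "missing" half of the boundary of $[-n,n]^2$ can be duplicated to its periodic image on the opposite face, splitting the weight $1$ into two halves (or a corner weight into four quarters). Carrying out the change of variables $k = 2nB^{-\tr}j$, i.e. $k_1 = j_1+j_2$, $k_2 = j_2 - j_1$, converts $\Lambda_n^*$ into the set $X_n^*$ as written, and the weight bookkeeping produces exactly $c_k^{(n)} = 1, \tfrac12, \tfrac14$ according to whether $k$ is interior, on an edge, or at a corner of $[-n,n]^2$. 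This gives a cubature valid for all $f \in \CH_N$, hence in particular for $f \in \CT_{2n-1}^*$; the final step is to verify the exactness range, namely that $\Lambda_{2n-1}^* \subseteq \Lambda_N^\dag$ modulo the lattice $2n\ZZ^2$ in the appropriate sense (so that no aliasing occurs among the exponentials being integrated), which is where the "$2n-1$" rather than "$2n$" enters — the same loss-of-one phenomenon seen in \eqref{cuba-SS2a}.

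I expect the main obstacle to be precisely this last bookkeeping: showing that the symmetrization in Stage~2 does not corrupt exactness, i.e. that when we fold a boundary node to its periodic partner the two contributions agree for every $f \in \CT_{2n-1}^*$ and that the enlarged index set $\Lambda_{2n-1}^*$ still injects into $\Lambda_n \bmod 2n\ZZ^2$. The restriction $n \ge 2$ presumably guarantees $\Lambda_n^*$ actually has points on all four faces so the weighting scheme is well-defined. None of this is conceptually hard given Theorem~\ref{thm:df}, but it requires care with the lattice arithmetic; since the Square--Rhombus case was treated in \cite{LSX2}, I would lean on that reference for the detailed verification and present here only the reduction to Theorem~\ref{thm:df} together with the explicit node-and-weight computation.
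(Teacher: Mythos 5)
Your outline follows the same route the paper takes for this theorem, which is itself only a sketch: Stage 1 from Theorem \ref{thm:df} with $A=I$, $B=nR$ and $|\det N|=2n^2$, Stage 2 by periodic folding of the boundary nodes, then the change of variables $k=2nB^{-\tr}j$ carrying $\Lambda_n^*$ onto $X_n^*$, with the details deferred to \cite{LSX2}. The node computation and the weight bookkeeping $1,\tfrac12,\tfrac14$ are correct as you describe them.

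There is, however, a genuine gap in exactly the step you flag as the main obstacle. The clause ``valid for all $f\in\CH_N$, hence in particular for $f\in\CT_{2n-1}^*$'' is backwards: $\CH_N$ is spanned by exponentials indexed by $\Lambda_n$ (dimension $2n^2$), whereas $\CT_{2n-1}^*$ is indexed by the much larger set $\Lambda_{2n-1}^*$, so nothing follows ``in particular.'' The condition you then propose to check --- that $\Lambda_{2n-1}^*$ injects into $\Lambda_n$ modulo $2n\ZZ^2$ --- is both false (the points $(n,0)$ and $(-n,0)$ lie in $\Lambda_{2n-1}^*$ and differ by $(2n,0)=nR(1,1)^\tr$) and not what exactness requires. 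What is needed is only that no nonzero index of $\CT_{2n-1}^*$ aliases to the constant on the node set, i.e.\ $\Lambda_{2n-1}^*\cap nR\ZZ^2=\{0\}$: the equal-weight sum of $\e^{2\pi i\,k\cdot B^{-\tr}j}$ over $j\in\Lambda_N$ is a character sum over the group $B^{-\tr}\ZZ^2/\ZZ^2$ and hence vanishes unless $B^{-1}k\in\ZZ^2$, that is, unless $k\in nR\ZZ^2$; since a nonzero $k\in nR\ZZ^2$ has $|k_2+k_1|$ or $|k_2-k_1|$ at least $2n$, while every $k\in\Lambda_{2n-1}^*$ has both at most $2n-1$, every nonconstant exponential in $\CT_{2n-1}^*$ is integrated to zero by the formula, as required. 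I would also caution against the seemingly natural alternative of writing each $k\in\Lambda_{2n-1}^*$ as $k'-k''$ with $k',k''\in\Lambda_n$ and invoking \eqref{c-d-inner}: the parity constraint hidden in $\Lambda_n$ excludes extreme points such as $(2n-1,0)$ from the difference set $\Lambda_n-\Lambda_n$, so only the character-sum argument covers all of $\Lambda_{2n-1}^*$. With that step repaired, the rest of your argument stands.
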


The index set $\Lambda_n^*$ is most suitable for dealing with algebraic polynomials.
In fact, under the change of variables $x \mapsto y$ in \eqref{SS:x-y}, the space
$\CTC_m^*$ becomes the space $\Pi_m^2$ of algebraic polynomial of total degree
$m$ and the cubature \eqref{cuba1} becomes a cubature for the product
Chebyshev weight $W_0(y)$ that is exact for $\Pi_{2n-1}^2$. Let
\begin{equation*} 
  \Xi_n: = \{(2k_1,{2 k_2}): 0 \le k_1,k_2 \le \tfrac{n}{2}\}
       \cup  \{({2k_1+1},{2k_2+1}): 0 \le k_1,k_2 \le \tfrac{n-1}{2}\}.
\end{equation*}
Then, in Stage 4, \eqref{cuba1} becomes the following:

\begin{thm}
Let $z_{k} = (\cos \frac{k_1 \pi}{n},\cos \frac{k_2 \pi}{n})$. Then the cubature below is exact for $f \in \Pi_{2n-1}^2$,
\begin{equation} \label{cubaT}
 \frac{1}{\pi^2} \int_{[-1,1]^2} f(y) W_0(y) dy
        = \frac{1}{2 n^2}\sum_{k \in \Xi_n} \lambda_k^{(n)} f(z_k),
        \quad
  \lambda_k^{(n)} : = \begin{cases} 4, & k \in \Xi_n^\circ, \\
     2, & k \in \Xi_n^e, \\
     1, & k \in \Xi_n^v. \end{cases}
\end{equation}
\end{thm}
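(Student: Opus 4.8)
The plan is to obtain \eqref{cubaT} directly from Theorem~\ref{thm:cuba1} by pushing the trigonometric cubature \eqref{cuba1} through the change of variables \eqref{SS:x-y}, exactly as in Stage~4 of the classical case. First I would restrict \eqref{cuba1} to the subspace of functions even in both $x_1$ and $x_2$: since the node set $X_n^*/(2n)$ is symmetric on $[-\frac12,\frac12]^2$ (it is invariant under $\ZZ_2^2$ sign changes in $x$), averaging $f$ over the orbit $\{(\pm x_1,\pm x_2)\}$ shows that \eqref{cuba1} remains valid for $f\in\CTC_{2n-1}^*$, and the integral collapses from the full square to the quadrant $[0,\frac12]^2$ with a factor $4$. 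The symmetry folding identifies antipodal nodes $\tfrac{k}{2n}$ and $-\tfrac{k}{2n}$ and, more generally, all nodes in a $\ZZ_2^2$-orbit, so after the fold one is left with representatives having $k_1,k_2\ge 0$; tracking how the weights $c_k^{(n)}$ combine under this identification is the bookkeeping that produces the reduced index set $\Xi_n$ (keeping only the nonnegative representatives of $X_n^*$) together with the multiplicities $1,2,4$ according to whether a point lies in the interior, on an edge, or at a corner of the folded region.

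Next I would apply the substitution $y_j=\cos 2\pi x_j$. As recalled in the excerpt, this maps $\CTC_m^*$ onto $\Pi_m^2$, so exactness for $\CTC_{2n-1}^*$ becomes exactness for $\Pi_{2n-1}^2$, which is the desired polynomial degree. Under $x_j\mapsto y_j$ one has $dx_j = -\tfrac{1}{2\pi}(1-y_j^2)^{-1/2}dy_j$, so the plain Lebesgue integral over the quadrant turns into $\tfrac1{\pi^2}\int_{[-1,1]^2} f(y)W_0(y)\,dy$ with $W_0(y)=w_0(y_1)w_0(y_2)$, the product Chebyshev weight of the first kind; the factors of $2\pi$ from the two variables, combined with the factor $4$ picked up in the folding step, are exactly what convert the prefactor $\tfrac1{2n^2}$ of \eqref{cuba1} into the prefactor $\tfrac1{2n^2}$ of \eqref{cubaT} after the measure normalization. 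Finally, the node $\tfrac{k}{2n}=(\tfrac{k_1}{2n},\tfrac{k_2}{2n})$ maps to $(\cos\tfrac{k_1\pi}{n},\cos\tfrac{k_2\pi}{n})=z_k$, matching the stated nodes, and I would check that $k\in\Xi_n$ ranges precisely over the nonnegative representatives of $X_n^*$ by writing out the two congruence classes: the even part $\{(2k_1,2k_2):0\le k_i\le \tfrac n2\}$ and the odd part $\{(2k_1+1,2k_2+1):0\le k_i\le\tfrac{n-1}{2}\}$, which is exactly the displayed definition of $\Xi_n$.

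The only genuinely delicate point is the weight accounting, i.e.\ verifying that $\lambda_k^{(n)}$ comes out as $4,2,1$ on $\Xi_n^\circ,\Xi_n^e,\Xi_n^v$ rather than, say, off by a factor somewhere. Two effects must be disentangled: (i) the size of the $\ZZ_2^2$-orbit of a node, which is $4$ for a generic node but drops to $2$ or $1$ when some coordinate equals $0$ or $\pm n$ (after scaling, when $x_j\in\{0,\tfrac12\}$, i.e.\ $y_j\in\{\pm1\}$); and (ii) the original boundary weights $c_k^{(n)}\in\{1,\tfrac12,\tfrac14\}$ in \eqref{cuba1}. The clean way to handle this is to note that the pushed-forward cubature must again be a product Gauss–Chebyshev-type rule, whose weights are forced, so one only needs to confirm the product structure $\lambda_k^{(n)}=\mu_{k_1}^{(n)}\mu_{k_2}^{(n)}$ with $\mu^{(n)}_{k_j}=2$ for $0<k_j<n$ and $\mu^{(n)}_{k_j}=1$ for $k_j\in\{0,n\}$, and then observe that the corner weight $\tfrac14$ and the edge weight $\tfrac12$ in \eqref{cuba1} are precisely what make this factorization consistent after folding. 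With that check in place the theorem follows; I would also remark that \eqref{cubaT} is the familiar product Chebyshev–Gauss–Lobatto cubature of the first kind on $[-1,1]^2$, so its exactness for $\Pi_{2n-1}^2$ can alternatively be cited, but the derivation above keeps everything internal to the lattice framework.
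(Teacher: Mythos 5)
Your overall route is the same as the paper's: fold \eqref{cuba1} over the $\ZZ_2^2$ symmetry of $X_n^*$, restrict to functions even in both variables, and push the result through $y_j=\cos 2\pi x_j$, under which $\CTC_{2n-1}^*$ goes to the total-degree space $\Pi_{2n-1}^2$ and the constants $4\cdot(2\pi)^{-2}=\pi^{-2}$ match. The one step you single out as delicate, however, is handled by an argument that is wrong. You propose to verify the weights by asserting that ``the pushed-forward cubature must again be a product Gauss--Chebyshev-type rule, whose weights are forced,'' and you close by identifying \eqref{cubaT} with the product Chebyshev--Gauss--Lobatto cubature. It is not that rule: $\Xi_n$ contains only the grid points with $k_1\equiv k_2\pmod 2$, so $|\Xi_n|=\frac{n(n+1)}{2}+\lfloor\frac n2\rfloor+1$, roughly half of the $(n+1)^2$ nodes of the product rule and just one more than M\"oller's lower bound \cite{M}; this is exactly why the paper identifies \eqref{cubaT} with the near-minimal formula of \cite{X94,MP} rather than with a product formula. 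A cubature supported on a non-product node set and exact only for total degree $2n-1$ has no product structure forcing its weights, so the shortcut does not justify the values $4,2,1$, and the exactness of the product CGL rule (which concerns the product space $\Pi_{2n-1}\times\Pi_{2n-1}$ on a different node set) cannot be ``alternatively cited'' here.

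The bookkeeping you mention first, but do not carry out, is what actually proves the weight values, and it is short: for $k\in\Xi_n$ the combined weight is $\sum_{k'}c_{k'}^{(n)}$ over the $\ZZ_2^2$-orbit of $k$ inside $X_n^*$. If $0<k_1,k_2<n$ the orbit has $4$ points of $X_n^\circ$, giving $4\cdot 1=4$. If $k$ lies on an edge of $[0,n]^2$ but not at a corner, the orbit is either $2$ points of $X_n^\circ$ (when some $k_j=0$) or $4$ points of $X_n^e$ (when some $k_j=n$), giving $2\cdot1=4\cdot\frac12=2$. At the corners of $[0,n]^2$ one gets $1\cdot 1$ at $(0,0)$, $2\cdot\frac12$ at $(n,0)$ and $(0,n)$, and $4\cdot\frac14$ at $(n,n)$, i.e.\ $1$ in every case. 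With this computation replacing the product-rule appeal, your derivation is complete and coincides with the paper's.
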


The cardinality of $\Xi_n$ is $|\Xi_n| = \frac{n(n+1)}{2} + \lfloor \frac{n}{2} \rfloor +1$,
which is just one more than the theoretic lower bound for all such cubature (\cite{DX, M}).
The formula \eqref{cubaT} first appeared in \cite{X94}, where it is constructed by
considering the common zeros of orthogonal polynomials of two variables; see also
\cite{MP}. We can also derive similarly cubature for the product Chebyshev weight
$W_1(y)$ of the second kind.

The Lagrange interpolation polynomials based on the points in $\Xi_n$ were
defined and studied in \cite{X96}. The result there has also been recovered in \cite{LSX2},
with \eqref{interpolation} as the starting point, by utilizing the discrete Fourier analysis.

\subsection{Rhombic-Square} In this case we choose $A = R$ with fundamental
domain $\Omega_R = \{x \in \RR^2: -1 \le x_2 \pm x_1 < 1\}$, the rhombic, and
$B = n I$. Again write $\Lambda_n$ ... in place of $\Lambda_N$ ....
It is then easy to verify that $\Lambda_n = \Lambda_n^\dag$ with
$$
  \Lambda_n = \{j\in \ZZ^2: - n \le k_2 \pm k_1 < n\} \quad \hbox{and}\quad
  \Lambda_n^* : = \{j\in \ZZ^2: - n \le k_2 \pm k_1 \le n\}.
$$
Furthermore, the space of exponential functions $\CH_n$ is given by
$$
  \CH_n = \{\e^{\pi i ( (k_1+k_2)x_1+(k_2-k_1)x_2)}: k \in \Lambda_n\}
$$
and $\CH_n^*$ is likewise defined in terms of $\Lambda_n^*$. Changing variables
shows that
$$
 \CH_n^* :  = \{\e^{\pi i (j_1 x_1+ j_2 x_2)}:  -n \le j_1, j_2 \le n, \,\, j_1 \equiv j_2 \pmod 2\}.
$$
Following the program in Section 2.1, it is easy to see that the cubature in Stage 2 that
has symmetric set of nodes, indexed by $\Lambda_n^*$, takes the form
\begin{equation}\label{cuba-RS}
  \frac{1}{2} \int_{\Omega_R} f(x) dx = \frac{1}{2n^2} \sum_{k \in \Lambda_n^*} c_{k,n}
      f( \tfrac{k}{n}), \quad f \in \CH_{2n-1}^*, \quad c_{k,n} =
        \begin{cases} 1, & k \in \Lambda_n^\circ, \\  \frac12, & k \in \Lambda_n^e,
                   \\ \frac14, & k \in \Lambda_n^v. \end{cases}
\end{equation}
The subspace of functions in $\CH_n^*$ that are even in both variables becomes
\begin{equation} \label{eq:Tn}
\CT_n :=\sspan \{\cos \pi j_1x_1\cos\pi j_2x_2: 0 \le j_1,j_2 \le n, \,\, j_1 \equiv j_2 \pmod 2\}.
\end{equation}
For functions in $\CT_{2n-1}$, we only need to consider the triangle $T_R :=
\{x: 0 \le x_1,x_2, \,\ x_1+x_2 \le 1\}$. Thus, in Stage 3,  cubature \eqref{cuba-RS}
becomes
\begin{equation}\label{cuba-RS2}
 2  \int_{T_R} f(x) dx = \frac{1}{2n^2} \sum_{k \in \Xi_n} \lambda_{k,n}
      f( \tfrac{k}{n}), \qquad f \in \CT_{2n-1},
\end{equation}
where $T_R$ is the triangular domain
$\Xi_n =\{(k_1,k_2): 0 \le k_1, k_2,  k_1 + k_2 \le n\}$, and
$\lambda_{k,n} = 4$ if $k \in \Xi_n^\circ$, $\lambda_{k,n} = 2$
if $k \in \Xi_n^e$, $\lambda_{(0,0),n} =1$, and $\lambda_{(n,0),n} = \lambda_{(0,n),n} =1/2$.

Under the mapping $x \mapsto y=(\cos\pi x_1, \cos \pi x_2)$, 
the boundary $x_1+ x_2 =1$
of the triangle $T_R$ is mapped onto $y_1 + y_2 =0$, so that $T_R$ is mapped
onto the triangle $T_S = \{(y_,y_2): -1 \le y_1,y_2 \le 1, y_1 + y_2 \ge 0\}$, which is
half of the square $[-1,1]^2$. The cubature \eqref{cuba-RS2} in Stage 4 becomes
a cubature with respect to the product Chebyshev weight $W_0$ over $T_S$ that
is exact for the subspace of polynomials $\Pi_{2n-1}^* = \{T_{k_1}(x_1)T_{k_2}(x_2):
0 \le k_1,k_2 \le 2 n-1, k_1 \equiv k_2\pmod 2 \}$, the image of $\CT_{2n-1}$
under the same mapping. Since $\Pi_n^*$ does not contain polynomials of
total degree, we shall not write this cubature explicitly out. It is easy to see,
however, that this cubature is in fact half of the product Chebyshev-Gaussian-Lobatto
cubature, in the sense that its domain is half and it is exact for half of the polynomials
of the latter cubature.

\subsection{Rhombic-Rhombic} Here we choose $A = R$ and $B = n R^{-\tr}
= \frac{n}{2} A$, so that $N = B^{\tr} A = nI$ have integer entries. Then
$\Omega_A = \Omega_R$ as in the previous case. Again denote $\Lambda_N$,
... by $\Lambda_n$, ... . It is easy to see that $\Lambda_n = \Lambda_n^\dag$ with
$$
\Lambda_n   = \{j \in \ZZ^2: - \tfrac{n}{2} \le - j_1, j_2 < \tfrac{n}{2}\} \quad\hbox{and} \quad
\Lambda_n^* := \{j \in \ZZ^2: - \tfrac{n}{2} \le  j_1, j_2 \le \tfrac{n}{2}\}.
$$
Moreover, the space of exponential functions $\CH_n$ is given by, as in Section 3.2,
$$
  \CH_n = \{\e^{ \pi i ( (k_1+k_2)x_1+(k_2-k_1)x_2)}: k \in \Lambda_n\}
$$
and $\CH_n^*$ is likewise defined with $\Lambda_n$ replaced by $\Lambda_n^*$.
In this case, the cubature derived from Theorem \ref{thm:df}, in Stage 1,
takes the form
\begin{equation}\label{cuba-RR}
  \frac{1}{2} \int_{\Omega_R} f(x) dx = \frac{1}{n^2} \sum_{k \in \Lambda_n}
      f(\tfrac{k_1+k_2}{n}, \tfrac{k_2-k_1}{n}), \quad \forall f \in \CH_{2n-1}^*.
\end{equation}
The set of nodes of this cubature is on $\Omega_R$, and it contains no points on
the boundary of $\Omega_R$ when $n$ is an odd integer, whereas it contains
points on half of the boundary of $\Omega_R$ when $n$ is an even integer.
In the latter case, we can again derive a cubature, exact for $\CH_{2n-1}^*$,
that has notes indexed by $\Lambda_n^*$ as in Stage 2. Let us consider,
however, only the case of $n$ being an odd integer below. As can be seen upon
changing variables $j_1 = k_1+k_2$ and $j_2 = k_2-k_1$, the subspace of
functions in $\CH_n^*$ that are even in both variables is exactly $\CT_n$ in
\eqref{eq:Tn}. Thus, just like in the case of Rhombic-Square, restricting
\eqref{cuba-RR} to functions in $\CH_{2n-1}^*$ that are even
in both variables, we deduce a cubature of Stage 3 on the triangle $T_R$,
\begin{equation}\label{cuba-RR2}
 2  \int_{T_R} f(x) dx = \frac{1}{n^2} \sum_{k \in \Xi_n} \lambda_{k,n}
      f( \tfrac{k_1+k_2}{n}, \tfrac{k_2-k_1}{n}), \qquad \forall f \in \CT_{2n-1},
\end{equation}
where $\Xi_n = \{(k_1,k_2): 0 \le k_1,k_2, k_1+k_2 \le \frac{n}{2} \}$;
$\lambda_{k,n} = 4$ if $k \in \Xi_n^\circ$,
$\lambda_{k,n} = 2$ if   either $k_1 =0$ or $k_2=0$ or $k_1 + k_2 = \frac{n}{2}$ but not both (i.e.,$k \in \Xi_n^e$),
and $\lambda_{(0,0),n} =1$, $\lambda_{(0,n),n} =1$ .

Finally, under the mapping $x \mapsto y = (\cos \pi x_1, \cos \pi x_2)$, the cubature
\eqref{cuba-RR2} becomes a cubature with respect to the product Chebyshev
weight $W_0$ over the triangle domain $T_S$ for the polynomial subspace
$\Pi_{2n-1}^*$ defined in the previous subsection. This cubature is exactly
half of the algebraic cubature  in the Square-Rhombic case.

\subsection{Hexagon-Hexagon}
In this case we choose $A = H$ and $B  =  \frac{n}{2} H$, where
$$
   H =  \begin{pmatrix} \sqrt{3} & 0\\ -1 & 2\end{pmatrix} \quad \hbox{with} \quad
   \Omega_H=\left\{x\in \RR^2:\  -1\leq x_2, \tfrac{\sqrt{3} x_1}{2} \pm
\tfrac{x_2}{2}  < 1 \right\}.
$$
This case was studied in \cite{LSX1}. We shall be brief, but recall necessary definitions
that are needed in the following subsection. As shown in \cite{LSX1,Sun}, it is more
convenient to use homogeneous coordinates  $(t_1,t_2,t_3)$ defined by
\begin{align}\label{coordinates}
\begin{pmatrix}t_1 \\ t_2 \\ t_3 \end{pmatrix} =
 \begin{pmatrix}  \frac{\sqrt{3}}{2} & -\frac12 \\
                          0 &  1\\
                  -\frac{\sqrt{3}}{2} & -\frac12
 \end{pmatrix}  \begin{pmatrix}x_1 \\ x_2 \end{pmatrix} := E x,
\end{align}
which satisfies $t_1 + t_2 +t_3 =0$. We adopt the convention of using bold letters,
such as $\tb$ to denote points in homogeneous coordinates. We define by
$$
    \RR_H^3 : = \{\tb = (t_1,t_2,t_3)\in \RR^3: t_1+t_2 +t_3 =0\} \quad \hbox{and} \quad
     \HH := \ZZ^3 \cap \RR^3_H
$$
the spaces of points and integers in homogeneous coordinates, respectively. In such
coordinates, the hexagon $\Omega_H$ becomes
\begin{align*}
\Omega=\left\{\tb \in \RR_H^3:\  -1\le  t_1,t_2,-t_3<1 \right\},
\end{align*}
which is the intersection of the plane $t_1+t_2+t_3=0$ with the cube $[-1,1]^3$.
The index sets $\Lambda_n$  and $\Lambda_n^\dag$ satisfy $\Lambda_n =
\Lambda_n^\dag = \HH_n$, where
$$
  \HH_n :=  \{\jb \in \HH: -n \le j_1,j_2, - j_3 < n \} \quad \hbox{and}\quad
   \HH_n^* :=  \{\jb \in \HH^*: -n \le j_1,j_2, - j_3 \le n \}.
$$
Furthermore, since, for $k = (k_1,k_2)$, $k^{\tr} H^{-1}x = \frac13 \kb \cdot \tb$ with
$\kb = (k_1,k_2,-k_1 - k_2)^\tr \in \HH$, the exponential functions and the space
$\CH_N$ become
$$
 \phi_\kb(\tb): = \e^{\frac{2 \pi i}{3}\kb^\tr \tb} \quad
       \hbox{and}\quad   \CH_n: =\{ \phi_{\kb}: \kb \in \HH_n\}.
$$
In homogeneous coordinates, $x \equiv y \pmod H$ becomes
$\tb \equiv \sb \pmod 3$, which is defined by $t_1-s_1 \equiv t_2-s_2 \equiv t_3-s_3
\pmod 3$, so that $f$ periodic in $H$, i.e. $f(x + H ) = f(x)$, becomes
$f(\tb) = f(\tb + \jb)$ whenever $\jb \equiv 0 \pmod 3$.

In this case, the cubature derived from Theorem \ref{thm:df} in Stage 1 has nodes
over $\{\frac{\jb}{n}: \kb \in \HH_n\}$, from which we derive another cubature, the set of
nodes of which is symmetric and indexed by $\HH_n^*$, as in Stage 2:

\begin{thm} \label{thm:HH1}
The following cubature is exact for $f\in \CH_{2n-1}^*$,
\begin{equation} \label{cuba-HH}
\frac{1}{|\Omega|} \int_{\Omega} f(\tb)d\tb  = \frac{1}{3n^2}
   \sum_{\jb\in \HH_n^*} c_\jb^{(n)} f(\tfrac{\jb}{n}), \quad
     c_{\jb}^{(n)} = \begin{cases} 1, & \jb \in \HH_n^\circ,\\
       \frac{1}{2},  & \jb \in \HH_n^e,\\ \frac{1}{3},    & \jb \in \HH_n^v,  \end{cases}
 \end{equation}
\end{thm}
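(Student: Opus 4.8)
The plan is to derive \eqref{cuba-HH} from \thmref{thm:df} by following the same four-stage program used in the classical Square-Square case and in \thmref{thm:cuba1}. First I would write out the cubature that comes directly from Theorem~\ref{thm:df} with the present choice $A = H$, $B = \frac{n}{2}H$: since $\la f,g\ra_{\Omega_A} = \la f,g\ra_N$ for $f,g \in \CH_N$, taking $g \equiv 1$ and recalling that $1 \in \CH_N$ (as $\mathbf{0} \in \HH_n$) gives, in homogeneous coordinates,
\begin{equation*}
  \frac{1}{|\Omega|}\int_\Omega f(\tb)\,d\tb = \frac{1}{|\det N|}\sum_{\jb \in \HH_n} f\!\left(\tfrac{\jb}{n}\right),
\end{equation*}
valid for all $f \in \CH_N\overline{\CH_N}$. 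The first routine point is to identify $|\det N| = \det(\frac{n}{2}H^{\tr}H) = 3n^2$ using $\det E^{\tr}E$-type computations, equivalently $|\Lambda_N| = 3n^2$, and to check that the product $\CH_N\cdot\overline{\CH_N}$ of exponentials indexed by $\HH_n$ contains $\CH_{2n-1}^* = \sspan\{\phi_\kb: \kb \in \HH_{2n-1}^*\}$; this is the analogue of the inclusion $\CH_n^* \cdot \overline{\CH_n^*} \supset \CH_{2n-1}^*$ checked termwise in the classical case, and amounts to the elementary fact that $\kb \in \HH_{2n-1}^*$ can be split as $\jb - \jb'$ with $\jb,\jb' \in \HH_n$.

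The substantive step is Stage 2: passing from the node set $\{\jb/n : \jb \in \HH_n\}$, which is not symmetric under the symmetry group of the hexagon, to the symmetric set indexed by $\HH_n^*$ with the boundary weights $c_\jb^{(n)}$. Here I would use the periodicity of $f \in \CH_{2n-1}^*$ modulo $3$ in homogeneous coordinates: a lattice point $\jb$ on the ``missing'' part of the boundary of $n\Omega$ (where some coordinate equals $n$ but the half-open conditions $-n \le j_1,j_2,-j_3 < n$ exclude it) is congruent mod $3n$ — hence, after scaling, the value $f(\jb/n)$ is reproduced — to its reflected partner on the opposite edge. Symmetrizing the one-sided sum over $\HH_n$ by averaging it with its images under the reflections that swap opposite edges produces the weights $1$ in the interior, $\frac12$ on edge points (shared by two copies), and $\frac13$ at the three vertices of the hexagon (each fixed by a three-fold stabilizer, or equivalently shared among three edge-orbits); this reproduces exactly the $c_\jb^{(n)}$ in \eqref{cuba-HH}. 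The bookkeeping of which points lie in $\HH_n^\circ$, $\HH_n^e$, $\HH_n^v$ and with what multiplicity, together with the verification that the total weight sums to $3n^2$ (so the formula integrates the constant correctly), is the part that requires care but no new ideas — it is entirely parallel to the Square–Square computation leading to \eqref{cuba-SS2a} and to \thmref{thm:cuba1}.

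The main obstacle I anticipate is purely combinatorial rather than conceptual: correctly classifying the boundary lattice points of the scaled hexagon $n\Omega$ under the congruence $\tb \equiv \sb \pmod{3}$ and tracking the sixfold dihedral symmetry in the three homogeneous coordinates $t_1,t_2,t_3$ with the constraint $t_1+t_2+t_3 = 0$, so that each orbit is counted with the right weight and the weighted node count lands on $3n^2 = |\Lambda_N|$. Everything else — the exactness for $\CH_{2n-1}^*$, the value of $|\det N|$, and the reduction to the quadrature identity for the constant function — follows directly from \thmref{thm:df} and the algebra of the homogeneous coordinates recalled above. Since this case was treated in \cite{LSX1}, I would keep the argument brief, citing it for the details and only highlighting the passage through Stages 1 and 2 in the homogeneous-coordinate language needed for the Hexagon–Rotated-Hexagon case in the next subsection.
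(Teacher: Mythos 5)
Your proposal follows the same route as the paper: the paper derives \eqref{cuba-HH} by first applying Theorem~\ref{thm:df} with $A=H$, $B=\frac{n}{2}H$ (so $|\det N|=3n^2$) to get the Stage~1 rule with nodes indexed by $\HH_n$, and then symmetrizes to $\HH_n^*$ with the weights $c_\jb^{(n)}$ by exploiting the periodicity $f(\tb)=f(\tb+\jb)$ for $\jb\equiv 0\pmod 3$, deferring the combinatorial details to \cite{LSX1} exactly as you do. Aside from some loose phrasing (the opposite-edge identification is a lattice translation rather than a reflection, and exactness on $\CH_{2n-1}^*$ comes from keeping both $f,g\in\CH_N$ rather than from setting $g\equiv 1$), your argument matches the paper's.
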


The group of isometries of the hexagon lattice is generated by the reflections in the
edges of the equilateral triangles inside the regular hexagon, which is the reflection
group $\A_2$. By considering the invariant and anti-invariant functions under $\CA_2$
in the space $\CH_n$, we end up with functions that are analogues of cosine and sine
functions on an equilateral triangle, and the cubature \eqref{cuba-HH} becomes a
cubature on the triangle for such functions. To be more precise, we choose the triangle as
\begin{align} \label{Delta}
   \Delta := & \{(t_1,t_2,t_3) : t_1 + t_2 + t_3 =0,   0 \le t_1,  t_2, -t_3 \le 1\}.
\end{align}
The region $\Delta$ and its relative position in the hexagon are depicted
in Figure \ref{DeltaF}, where the points are labeled in homogeneous coordinates.
\begin{figure}[h]
\centering
\begin{minipage}{0.4\textwidth}\centering \includegraphics[width=1\textwidth]{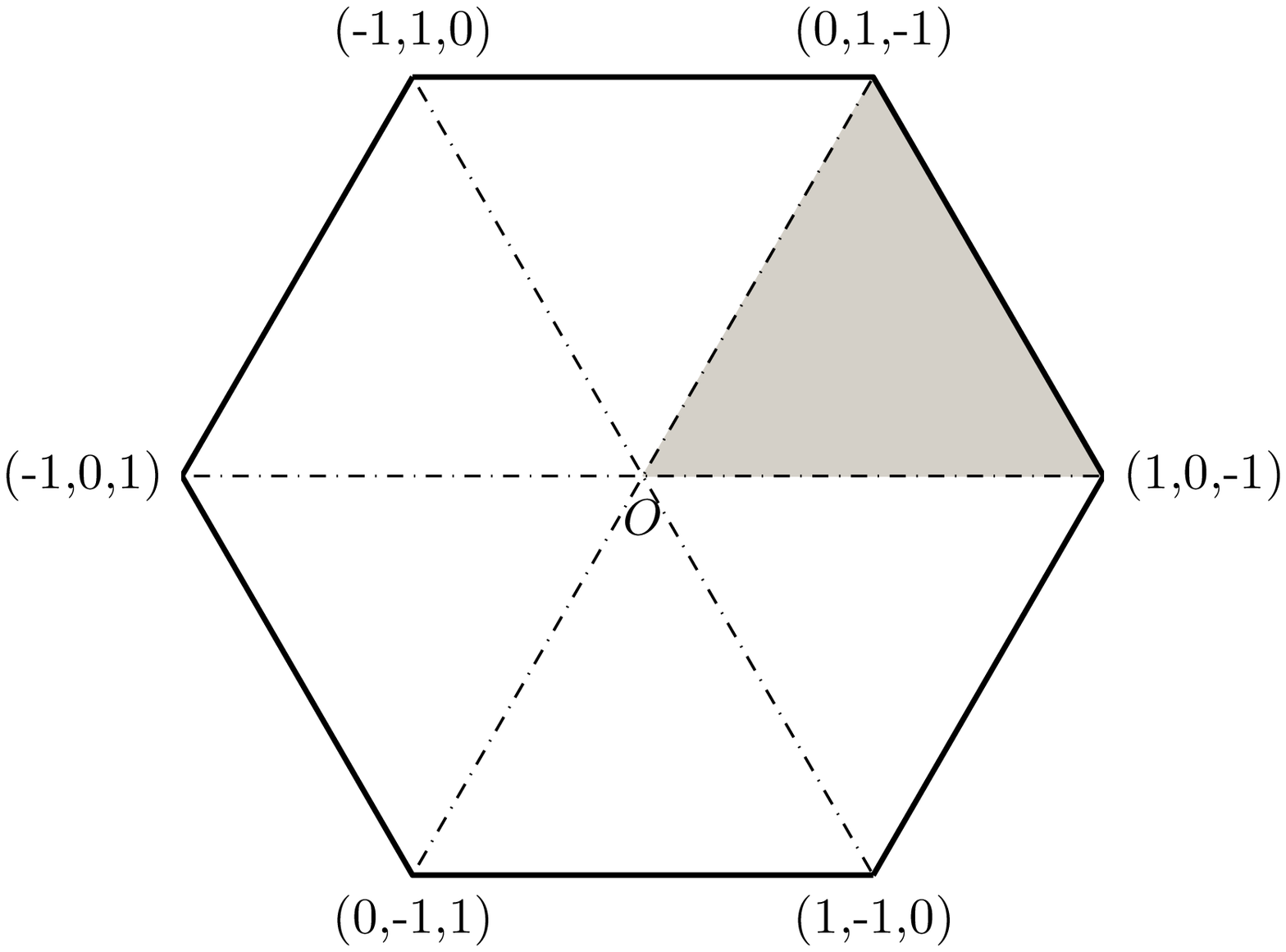}
\end{minipage}
\begin{minipage}{0.33\textwidth}\centering \includegraphics[width=1\textwidth]{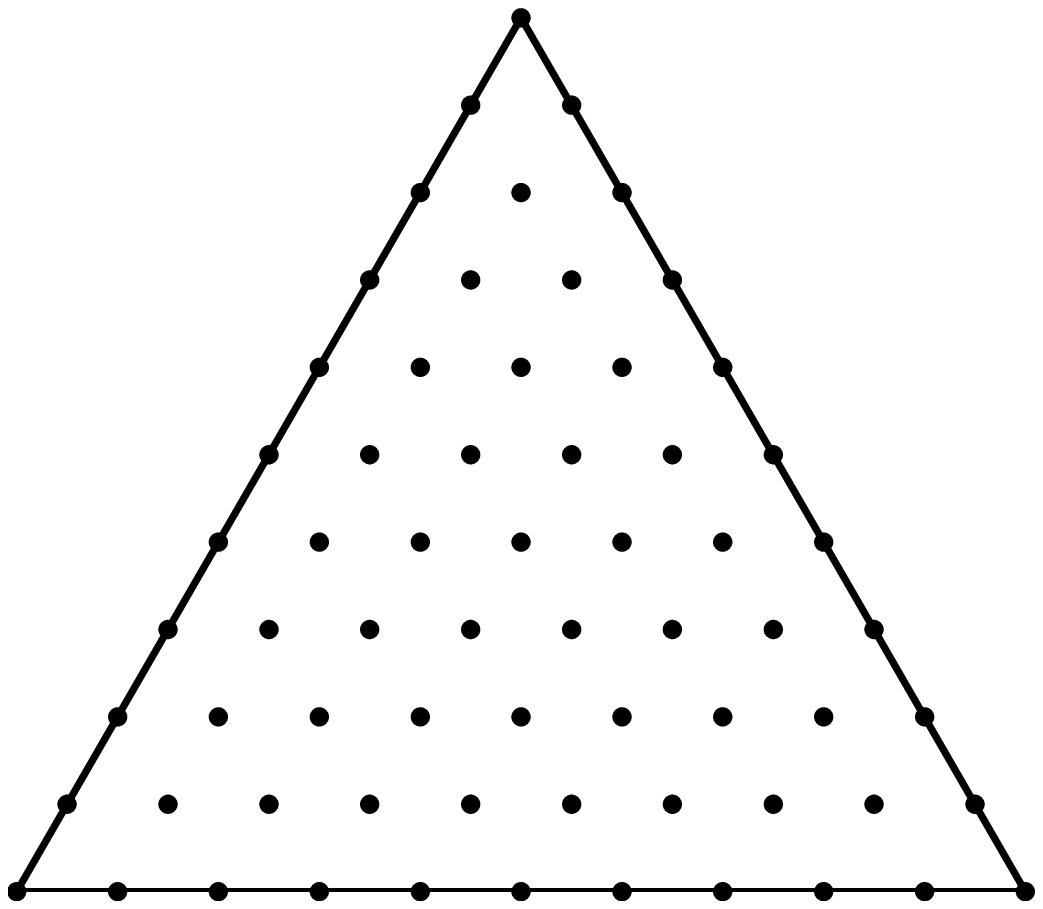}
\end{minipage}
\caption{The fundamental triangle of the regular hexagon.}
\label{DeltaF}
\end{figure}
The generalized cosine, $\TC_\kb$, and the generalized sine, $\TS_\kb$, are defined
in terms of
\begin{equation}\label{CP-pm}
  \CP^+ f(\tb) := \sum_{\sigma \in \CA_2} f(\tb \sigma) \quad \hbox{and}\quad
   \CP^- f(\tb) := \sum_{\sigma \in \CA_2} \sign(\sigma) f(\tb \sigma)
\end{equation}
as $\TC_\kb(\tb) := \CP^+ \phi_\kb(\tb)$ and $\TS_\kb(\tb) := \CP^- \phi_\kb(\tb)$,
respectively; more explicitly,
\begin{align}
 \TC_\kb(\tb) =&
 \frac{1}{3} \left[ \e^{\frac{i\pi}{3}(k_2-k_3)(t_2-t_3)}\cos k_1\pi t_1  \label{TC_cos}
  +    \e^{\frac{i\pi}{3}(k_2-k_3)(t_3-t_1)}\cos k_1\pi t_2 \right.\\
      & \left. +\e^{\frac{i\pi}{3}(k_2-k_3)(t_1-t_2)}\cos k_1\pi t_3\right],   \qquad \kb \in \Lambda,
         \notag\\
 \TS_\kb(\tb) =&
 \frac{1}{3} \left[
   \e^{\frac{i\pi}{3}(k_2-k_3)(t_2-t_3)}\sin k_1\pi t_1\label{TS_sin}
      +    \e^{\frac{i\pi}{3}(k_2-k_3)(t_3-t_1)}\sin k_1\pi t_2 \right.\\
       & \left.+\e^{\frac{i\pi}{3}(k_2-k_3)(t_1-t_2)}\sin k_1\pi t_3\right],
            \qquad \kb \in \Lambda^\circ, \notag
\end{align}
where  $\Lambda: = \{\kb \in \HH: k_1 \ge 0, k_2 \ge 0, k_3 \le 0\}$ and $\Lambda^\circ$ is the
interior of $\Lambda$. These functions are orthogonal with respect to the integral over
$\Delta$, and they are elements of $\CH_n$ that invariant and anti-invariant under $\CA_2$,
respectively. The cubature \eqref{cuba-HH} when restrict to invariant functions
becomes, as in Stage 3, the following:

\begin{thm} \label{thm:HH2}
Let $\CTC_n := \sspan\{ \TC_\kb: \kb \in \Lambda_n\}$. The cubature below is exact
for all $f \in \CTC_{2n-1}$,
\begin{equation}\label{cubaHH2}
  2  \int_{\Delta} f(t_1,t_2) dt_1 dt_2 = \frac{1}{3n^2}
      \sum_{j_1=0}^n\sum_{j_2=0}^{j_1}
          \lambda_\jb^{(n)} f(\tfrac{j_1}{n},\tfrac{j_2}{n}), \quad
          \lambda_\jb^{(n)} : =  \begin{cases}
       6, & \jb \in \Lambda_n^\circ,\\
       3 , & \jb \in \Lambda_n^\e,\\
       1, & \jb \in \Lambda_n^\ve.
       \end{cases}
\end{equation}
\end{thm}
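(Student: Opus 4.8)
The plan is to derive \eqref{cubaHH2} from the symmetric cubature \eqref{cuba-HH} by restricting to $\CA_2$-invariant functions, exactly mirroring Stage 3 in the Square--Square and Rhombic--Square cases. First I would recall that every $f\in\CTC_{2n-1}$ lies in $\CH_{2n-1}^*$ and is invariant under the action of the reflection group $\CA_2$ on homogeneous coordinates, so \eqref{cuba-HH} applies to it verbatim:
\begin{equation*}
\frac{1}{|\Omega|}\int_\Omega f(\tb)\,d\tb = \frac{1}{3n^2}\sum_{\jb\in\HH_n^*} c_\jb^{(n)} f(\tfrac{\jb}{n}).
\end{equation*}
Since $\Omega$ is tiled by six copies of $\Delta$ under $\CA_2$ (up to boundary) and $f$ is $\CA_2$-invariant, the left side collapses to $\frac{6}{|\Omega|}\int_\Delta f\,d\tb$, and because $|\Omega|$ equals $3$ times the Euclidean area of $\Delta$ (the hexagon has area $3$ in the homogeneous parametrization used, with $d\tb$ the induced area form on $t_1+t_2+t_3=0$; this normalization must be checked against \eqref{coordinates}), the constant in front becomes the $2\int_\Delta f(t_1,t_2)\,dt_1\,dt_2$ appearing in \eqref{cubaHH2}.

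The core combinatorial step is to fold the sum over $\HH_n^*$ onto a fundamental domain for the $\CA_2$-action. I would partition $\HH_n^*$ into $\CA_2$-orbits; since $f(\jb/n)$ is constant on each orbit, the sum over $\HH_n^*$ becomes a sum over orbit representatives, each weighted by (orbit size)$\times c_\jb^{(n)}$. Choosing the representatives to be the lattice points $\jb=(j_1,j_2,j_3)$ with $j_1\ge j_2\ge -j_3\ge 0$ — equivalently, after writing the triangular index set as $0\le j_2\le j_1\le n$ — one identifies three orbit types: generic interior points of $\Delta$ have orbit size $6$, points on an edge of $\Delta$ but not a vertex have orbit size $3$ (stabilized by one reflection), and the three vertices of $\Delta$ are fixed points of order-$3$ rotation with orbit size $1$ inside $\Omega$... here one must be careful: a vertex of $\Delta$ such as $(1,1,-2)$-type points actually has the full $\CA_2$-orbit inside $\HH_n^*$ of size depending on position, but after accounting that boundary points of $\Omega$ carry the reduced weight $c_\jb^{(n)}=\tfrac12$ or $\tfrac13$, the products (orbit size)$\cdot c_\jb^{(n)}$ come out to $6$, $3$, $1$ respectively, which is precisely $\lambda_\jb^{(n)}$ in \eqref{cubaHH2}. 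Matching these three cases one-by-one against the definitions of $\Lambda_n^\circ$, $\Lambda_n^\e$, $\Lambda_n^\ve$ is the heart of the argument.

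Finally I would verify the exactness range: $f\in\CTC_{2n-1}=\sspan\{\TC_\kb:\kb\in\Lambda_{2n-1}\}$, and each $\TC_\kb=\CP^+\phi_\kb$ with $\kb\in\Lambda_{2n-1}$ is a combination of $\phi_{\kb\sigma}$, $\sigma\in\CA_2$; since $\Lambda_{2n-1}^*$ (and hence $\HH_{2n-1}^*$) is $\CA_2$-invariant, all these $\phi_{\kb\sigma}$ lie in $\CH_{2n-1}^*$, so \eqref{cuba-HH} indeed integrates $f$ exactly, and the folding is an identity, not an approximation. The main obstacle is the bookkeeping of the boundary: one must correctly track which lattice points of $\HH_n^*$ lie on $\partial\Omega$ versus $\partial\Delta$, confirm that the reduced weights $c_\jb^{(n)}\in\{1,\tfrac12,\tfrac13\}$ combine with the orbit sizes to give exactly $\{6,3,1\}$ at \emph{every} point of the triangular grid $\{0\le j_2\le j_1\le n\}$ — including the three corners, where both the orbit structure and the weight degenerate simultaneously — and to get the overall normalization constant $\tfrac{1}{3n^2}$ to survive the factor of $6$ from the domain folding against the factor of $\tfrac16$ (on average) from the orbit folding. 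Everything else is the same routine invariance argument already used twice in the preceding subsections.
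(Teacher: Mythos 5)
Your proposal is correct and takes essentially the same route as the paper, which states the result by restricting the symmetric hexagon cubature \eqref{cuba-HH} to $\CA_2$-invariant functions and deferring the details of the folding to \cite{LSX1}: the hexagon decomposes into six copies of $\Delta$, the sum over $\HH_n^*$ collapses onto orbit representatives in the triangular grid, and the products of the weights $c_\jb^{(n)}$ with the orbit counts yield $\lambda_\jb^{(n)}\in\{6,3,1\}$. The boundary bookkeeping you flag (vertices and edge points of $\Delta$ lying on $\partial\Omega$) is precisely the content the paper omits, and your description of how it resolves is sound.
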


The nodes of the cubature \eqref{cubaHH2} are equally spaced points in $\Delta$ (Figure 2).

The generalized cosine and sine functions can be mapped into algebraic polynomials of two variables under the following mapping,
\begin{align} \label{x-y}
\begin{split}
x & =  \tfrac{4}{3} \cos\tfrac{\pi}{3}(t_2-t_1) \cos\tfrac{\pi}{3}(t_3-t_2)
  \cos\tfrac{\pi}{3}(t_1-t_3)-\tfrac{1}{3}, \\
y & = \tfrac{4}{3} \sin\tfrac{\pi}{3}(t_2-t_1) \sin\tfrac{\pi}{3}(t_3-t_2)
     \sin\tfrac{\pi}{3}(t_1-t_3),
\end{split}
\end{align}
which are the real and imaginary part of $\TC_{0,1,-1}(\tb)$, the first non trivial generalized
cosine function. Under this mapping, we call the polynomials
\begin{align*}
T_k^m(x,y) :   = \TC_{k, m-k, - m}(\tb) \quad \hbox{and}\quad
 U_k^m(x,y) :   = \frac{\TS_{k+1,m-k+1,-m-2}(\tb)}{\TS_{1,1,-2}(\tb)},
\end{align*}
where $0 \le k \le m$, generalized Chebyshev polynomials of the first and the second
kind, respectively. They are algebraic polynomials of total degree $n$ and are
orthogonal polynomials with respect to the weight function $w_{-\frac12}(x,y)$ and
$w_{\frac12}(x,y)$, respectively, where $w_\a(x,y)$ is defined by
$$
  w_\alpha(x,y) = \frac{4^\alpha}{27^\alpha} \pi^{4\alpha}
      \left[-3(x^2+y^2+1)^2 + 8 (x^3-3x y^2)+4 \right]^\alpha,
$$
and the integral domain is the region $\Delta^*$ bounded by the Steiner's hypocycloid,
depicted in Figure 3, which is the region on which $w_\a(x,y)$ is positive.
\begin{figure}[h]
 \includegraphics[width=0.3\textwidth,height=0.3\textwidth]{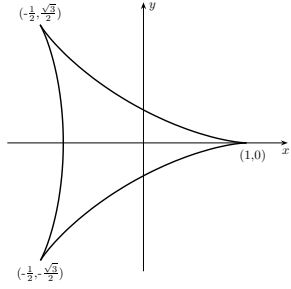}
 \caption{The region $\Delta^*$ bounded by Steiner's hypocycloid.}
\end{figure}
These polynomials were first studied in \cite{K}. As in Stage 4, the cubature
\eqref{cubaHH2} under the change of variable \eqref{x-y} becomes a cubature for
$w_{-\frac12}(x,y)dxdy$ on $\Delta^*$ that has $\dim \Pi_{n}^2$ nodes and is exact
for algebraic polynomials of degree $2n-1$. Furthermore, we can derive a cubature
from \eqref{cuba-HH} for anti-invariant functions in Stage 3, which becomes under
\eqref{x-y} a cubature for $w_\frac12(x,y)dxdy$ that has $\dim \Pi_{n-1}^2$ nodes
and is exact for algebraic polynomials of degree $2n-1$. The latter one provides an
example of a family of Gaussian cubature formulas, a rarity of only the second
example known so far (the first one appeared in \cite{SX}); see \cite{LSX1} for details.
We refer to \cite{DX, My, St} for the topic of Gaussian cubature.

We now address one question that was not addressed in \cite{LSX1}. Taking the
cue form the cubature \ref{cuba-SS2b} in the Square-Square case, we can apply the
cubature derived in Stage 1 on the functions $f(\tb+ \ab)$ and then use the hexagonal
periodicity of the integral to derive the following cubature in Stage 2,
\begin{equation} \label{cubaHH3}
  \frac{1}{|\Omega|} \int_\Omega f(\tb) d\tb =  \frac{1}{3n^2} \sum_{\jb \in \HH_n}
      f(\tfrac{\jb}{n} +\ab), \qquad f  \in \CH_{2n-1}^*,
\end{equation}
and hope to choose $\ab$ so that the set of nodes in \eqref{cubaHH3} is symmetric.
The question is if it is possible to find a $\ab$ so that the set of nodes
has full symmetry of $\CA_2$.

It is easy to see that if $\ab = (a_1,a_2,-a_1-a_2) \in \RR_H^3$ satisfies $|a_1|, |a_2| \le 1/n$,
then the set of nodes of \eqref{cubaHH3} will be inside the hexagon $\Omega$, although
not symmetric for most of the choices. The two cases that offer the most symmetry are
$$
  \ab_1: = (\tfrac1{3n}, \tfrac1{3n}, - \tfrac2{3n}) \quad \hbox{and} \quad
  \ab_2 := (-\tfrac1{3n}, -\tfrac1{3n},  \tfrac2{3n}),
$$
where, when $\ab_2$ is used, we need to use the periodicity of $f$ (or congruent relation 
with respect to $H$) to make sure that all points in \eqref{cubaHH3} are in $\Omega$. 
Neither of these two choices, however, offer complete symmetry under the group $\CA_2$.
In Figure 4, we depict the set of points resulted from these two choices.
\begin{figure}[h]
\hfill\includegraphics[width=0.35\textwidth]{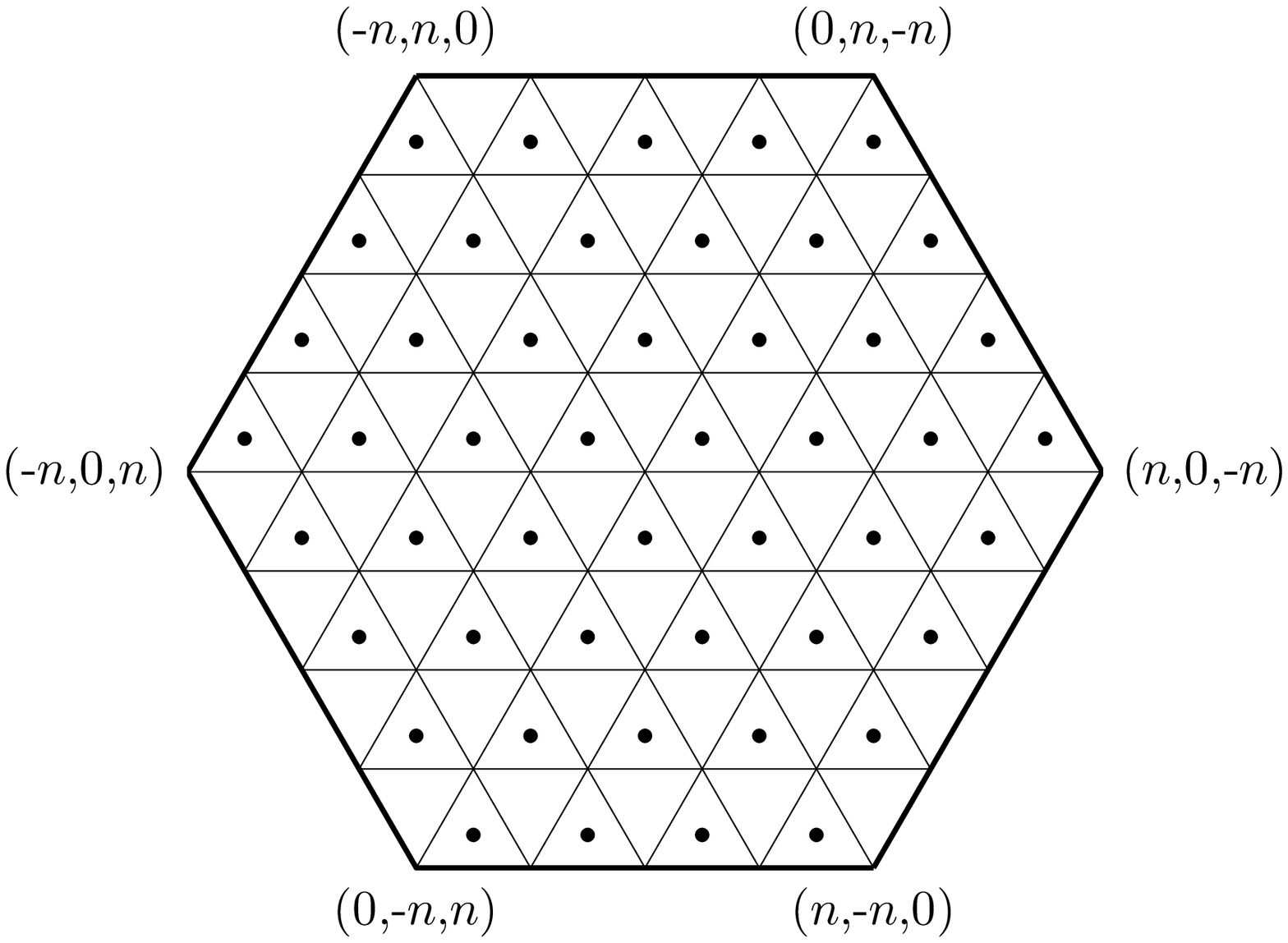}
\hfill\includegraphics[width=0.35\textwidth]{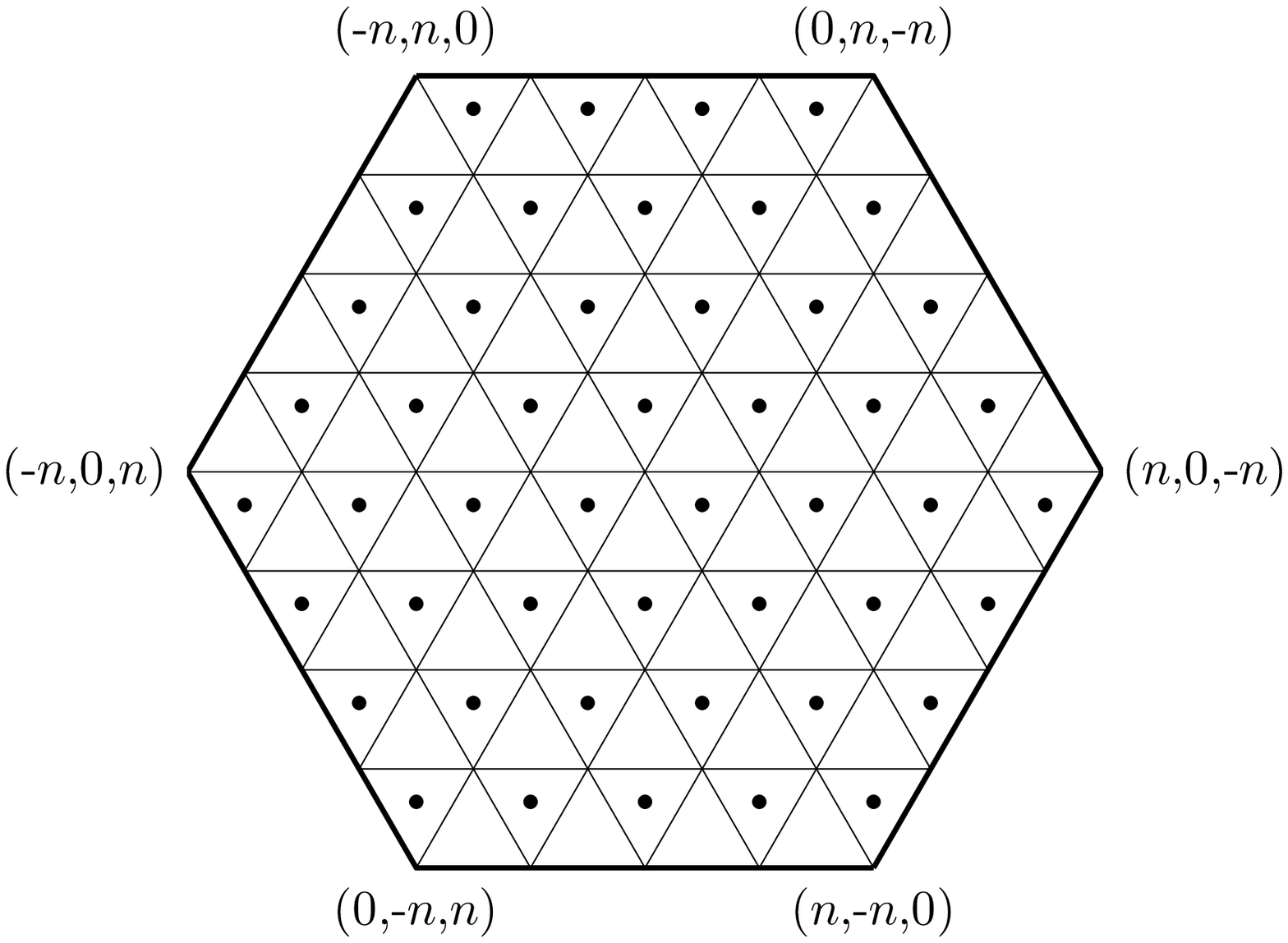}
\hspace*{\fill}
\caption{Nodes of \eqref{cubaHH3} with $\ab_1$ (left) and $\ab_2$ (right)}
\end{figure}
Each set of the points is invariant under a subgroup of $\CA_2$ of three rotations, but
neither is invariant under the group $\CA_2$. As a result, we cannot restrict the cubature
\eqref{cubaHH3} with either $\ab_1$ or $\ab_2$ to the generalized cosine or sine functions
in hopes of obtaining new cubature on the triangle in Stage 3, in contrast to
Square-Square case.

The interpolation on the hexagon and on the triangle were studied in \cite{LSX1}. In
particular, we have Lagrange interpolation based on equally space points on the triangle
$\Delta$, which enjoys a closed formula in trigonometric functions and has Lebesgue
constant in the order of $(\log n)^2$. One can also consider approximation on the hexagon
and the triangle (\cite{X09}) for functions that are periodic in $H$.

\subsection{Hexagon-Hexagon Transpose}
Here we choose $A = H$, the matrix for the hexagon lattice, and choose $B = n H^{-\tr}$
with $n \in \ZZ$,  so that $N = B^\tr A = n I$ has all integer entries. The fundamental
domain of the lattice $L_B$ is given by
$$
  \Omega_B = \left\{ x\in \RR^2: -\tfrac{n}{2\sqrt{3}} \le x_1,
        \tfrac{\sqrt{3}}{2} x_2 \pm \tfrac12 x_1 < \tfrac{n}{2\sqrt{3}}  \right\}.
$$
\subsubsection{Cubature}
It is again convenient to use homogeneous coordinates as defined in the previous
subsection. The $\Omega_B$ is the regular hexagon in Fig. 1 rotated by $90^\circ$,
as depicted in Figure 2, in which the right hand figure is labeled in homogeneous
coordinates.
\begin{figure}[h]
\hfill\includegraphics[width=0.3\textwidth]{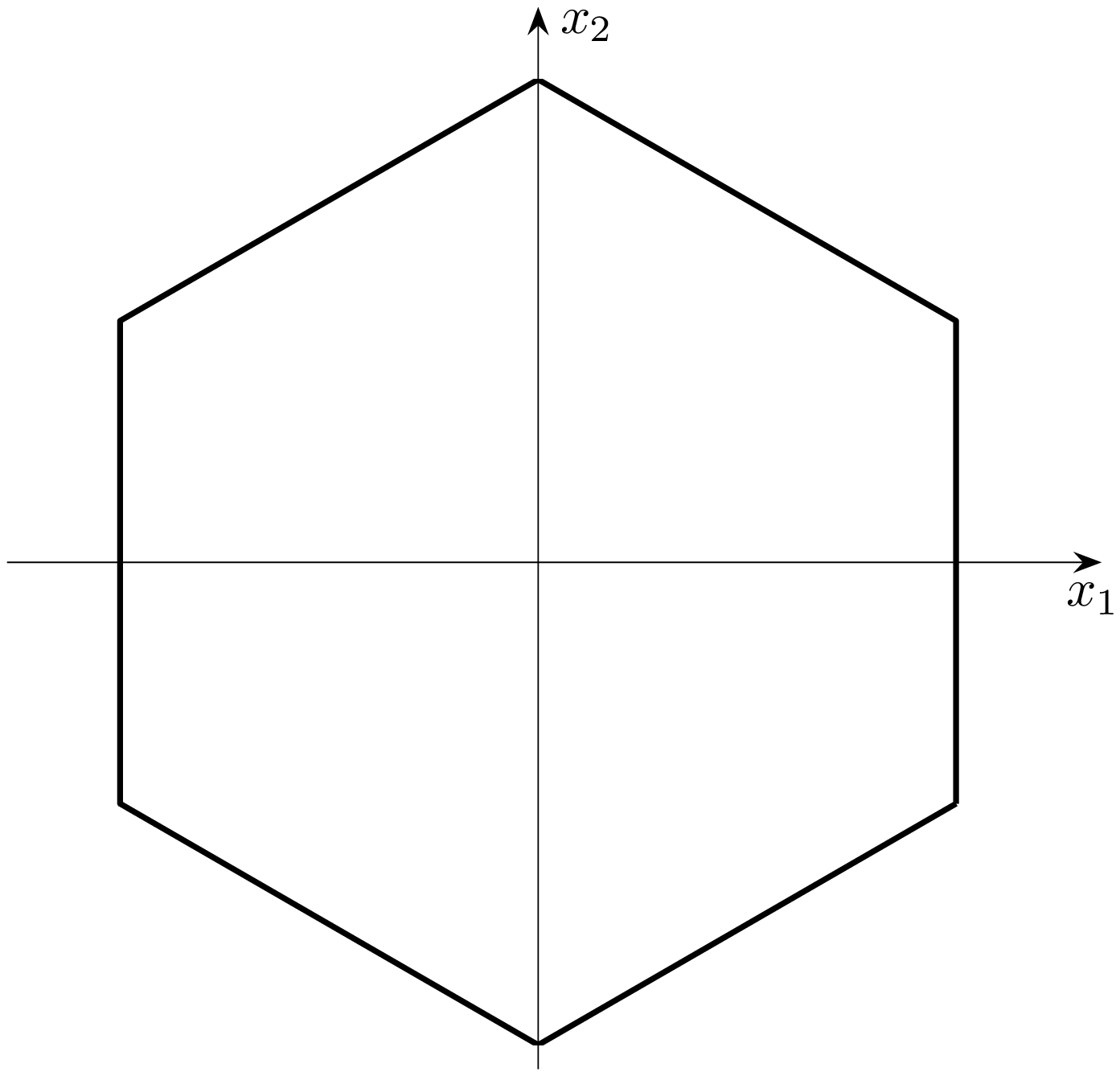}
\hfill\includegraphics[width=0.35\textwidth]{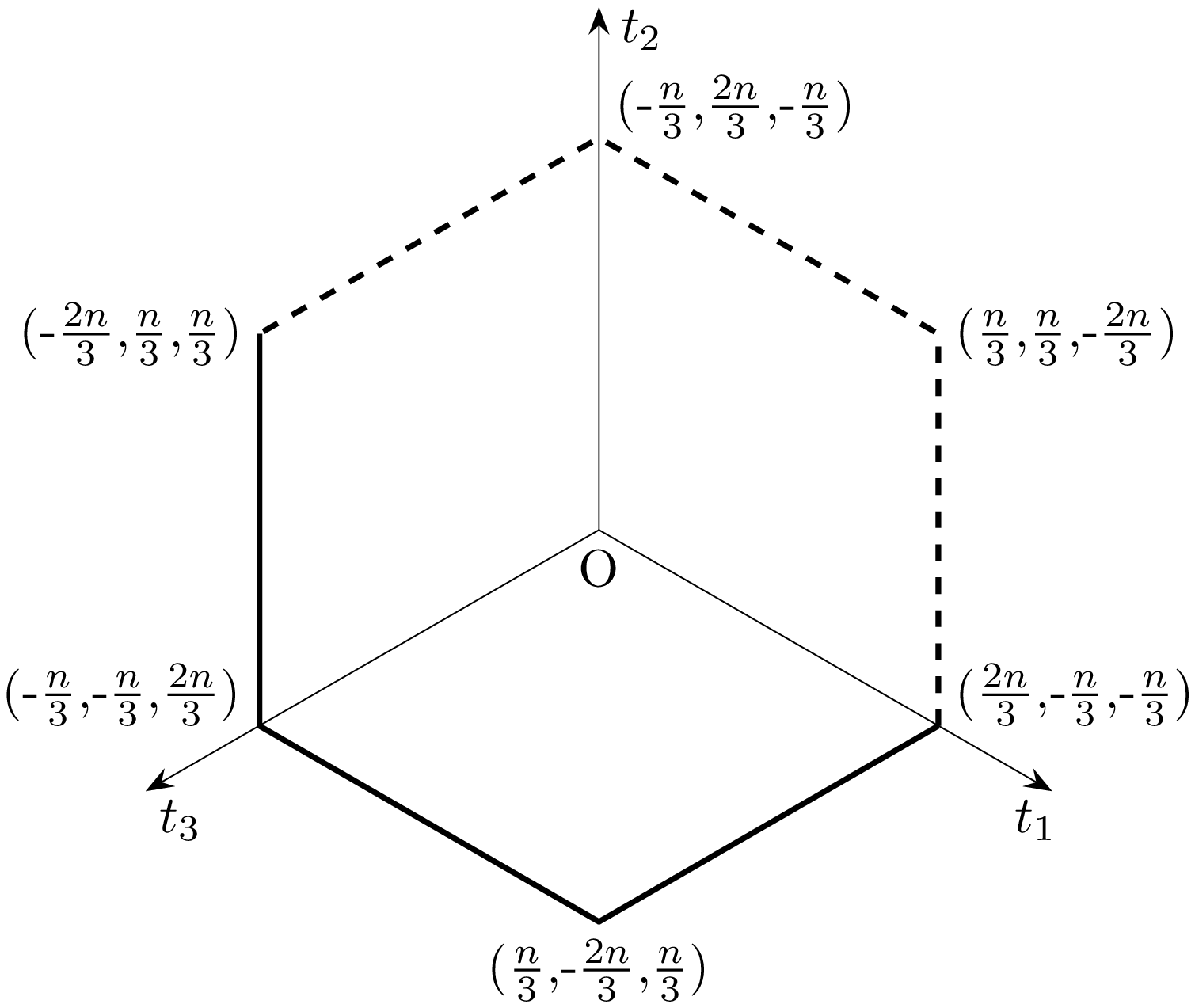}
\hspace*{\fill}
\caption{ The hexagon domain and $\Omega_B$}
\end{figure}
Here the index set $\Lambda_N^\dag = \left\{k\in \ZZ^2: \ -n\le 2k_2+k_1, k_2-k_1, 2k_1+k_2<n
\right\}$, which becomes in homogeneous coordinates $\KK_n^\dag$ defined by
\begin{align*}
  \KK_n^{\dag} : = \{\jb \in \HH: -n \le j_2-j_1,j_1-j_3, j_2- j_3 < n \}.
\end{align*}
We also have $\Lambda_N = \left\{k\in \ZZ^2: \ -n\le 2k_2-k_1, k_1+k_2, 2k_1-k_2<n \right\}$.
Recall that $\tb \equiv 0 \pmod 3$ means, by definition, $t_1\equiv t_2 \equiv t_3
 \pmod 3$. It is not hard to see that the set
$\Lambda_N $ becomes, in homogeneous coordinates, $\KK_n$ defined by
\begin{align*}
     \KK_n:  = \{\jb \in \HH: -n \le j_1,j_2,-j_3<n, \   \jb \equiv 0 \pmod 3 \}.
\end{align*}
We also denote by $\KK_n^{\dag*}$ and $\KK_n^*$ the sets defined with
$\le$ in place of $<$ in $\KK_n^\dag$ and $\KK_n$, respectively. The set $\KK_n^*$
can be obtained form a rotation of $\KK_n^{\dag*}$, as shown in the following
proposition, which can be easily verified.

\begin{prop} \label{prop:H*H}
For  $\kb=(k_1,k_2,k_3)\in \HH$, define $\wh{\kb}:=(k_3-k_2,k_1-k_3,k_2-k_1)$.
Then $\frac{\wh\kb}{3}\in \KK_n^{\dag*}$ if $\kb\in \KK_n^*$ and
 $\wh\kb\in \KK_n^{*}$ if $\kb\in \KK_n^{\dag*}$.
\end{prop}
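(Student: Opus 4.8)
\emph{Proof proposal.} The plan is to verify both claimed inclusions by a short direct computation, using throughout the identity $k_1+k_2+k_3=0$ that holds for every $\kb\in\HH$. A useful preliminary observation is that $\wh{\cdot}$ is a fixed linear map of $\RR^3_H$ into itself and that a routine substitution gives $\wh{\wh\kb}=-3\kb$; this explains why the two statements are natural companions, but each inclusion will still be checked on its own.

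For the first statement, take $\kb\in\KK_n^*$. The congruence $\kb\equiv 0\pmod 3$ means $k_1\equiv k_2\equiv k_3\pmod 3$, so all three entries of $\wh\kb=(k_3-k_2,k_1-k_3,k_2-k_1)$ are divisible by $3$ and hence $\m:=\tfrac13\wh\kb\in\ZZ^3$; since its entries sum to $0$, $\m\in\HH$. It remains to check the defining inequalities of $\KK_n^{\dag*}$. Using $k_1+k_2+k_3=0$ one finds $m_2-m_1=-k_3$, $m_1-m_3=-k_2$ and $m_2-m_3=k_1$, and each of these lies in $[-n,n]$ precisely because $\kb\in\KK_n^*$ requires $-n\le k_1,k_2,-k_3\le n$. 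Hence $\m\in\KK_n^{\dag*}$.

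For the second statement, take $\kb\in\KK_n^{\dag*}$. Then $\wh\kb$ has integer entries summing to $0$, and it satisfies $\wh\kb\equiv 0\pmod 3$ because $(k_3-k_2)-(k_1-k_3)=3k_3$ and, cyclically, $(k_1-k_3)-(k_2-k_1)=3k_1$ after substituting $k_1+k_2+k_3=0$. The box conditions defining $\KK_n^*$ require $-n\le k_3-k_2,\ k_1-k_3,\ k_1-k_2\le n$ (the last coming from the minus sign on the third coordinate), and these are exactly the hypotheses $-n\le k_2-k_1,\ k_1-k_3,\ k_2-k_3\le n$. Hence $\wh\kb\in\KK_n^*$.

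There is no substantive obstacle here: the whole argument is linear bookkeeping. The only points that need care are the sign conventions---the third coordinate carries a minus sign in the inequalities defining both $\KK_n$ and $\KK_n^\dag$---and retaining the closed boundary ($\le$) throughout, since it is the passage to the closures $\KK_n^*$ and $\KK_n^{\dag*}$ that makes $\wh{\cdot}$ an exact correspondence (and, via $\wh{\wh\kb}=-3\kb$, a bijection up to sign).
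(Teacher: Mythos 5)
Your proof is correct, and it is exactly the direct verification the paper has in mind: the paper states Proposition~\ref{prop:H*H} "can be easily verified" and omits the details, and your computation (reducing $m_2-m_1=-k_3$, $m_1-m_3=-k_2$, $m_2-m_3=k_1$ via $k_1+k_2+k_3=0$, and checking the mod-$3$ congruence through $3k_1$, $3k_3$) supplies precisely that verification, with the signs and closed inequalities handled correctly.
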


The finite dimensional space $\CH_n$ of exponential functions becomes
$$
  \CK_n : =   \sspan \left \{ \phi_\jb(\tb) = \e^{ \frac{2 \pi i}{3} \jb^\tr \tb}: \jb \in
    \KK_n^{\dag} \right \} \quad \hbox{and} \quad
  \CK_n^*: =   \sspan \left \{ \phi_\jb: \jb \in  \KK_n^{\dag*} \right \}.
$$
By induction, it follows that $\dim \CK_n = |\KK_n| = n^2$ and $\dim \CK_n^* = |\KK_n^*|
 = n^2 + n +1$ if $n=0,2 \pmod 3$ and  $|\KK_n^*| = n^2 + n -1$ if $n=1 \pmod 3$.
\begin{figure}[h]
\begin{minipage}{0.4\textwidth}\centering \includegraphics[width=1\textwidth]{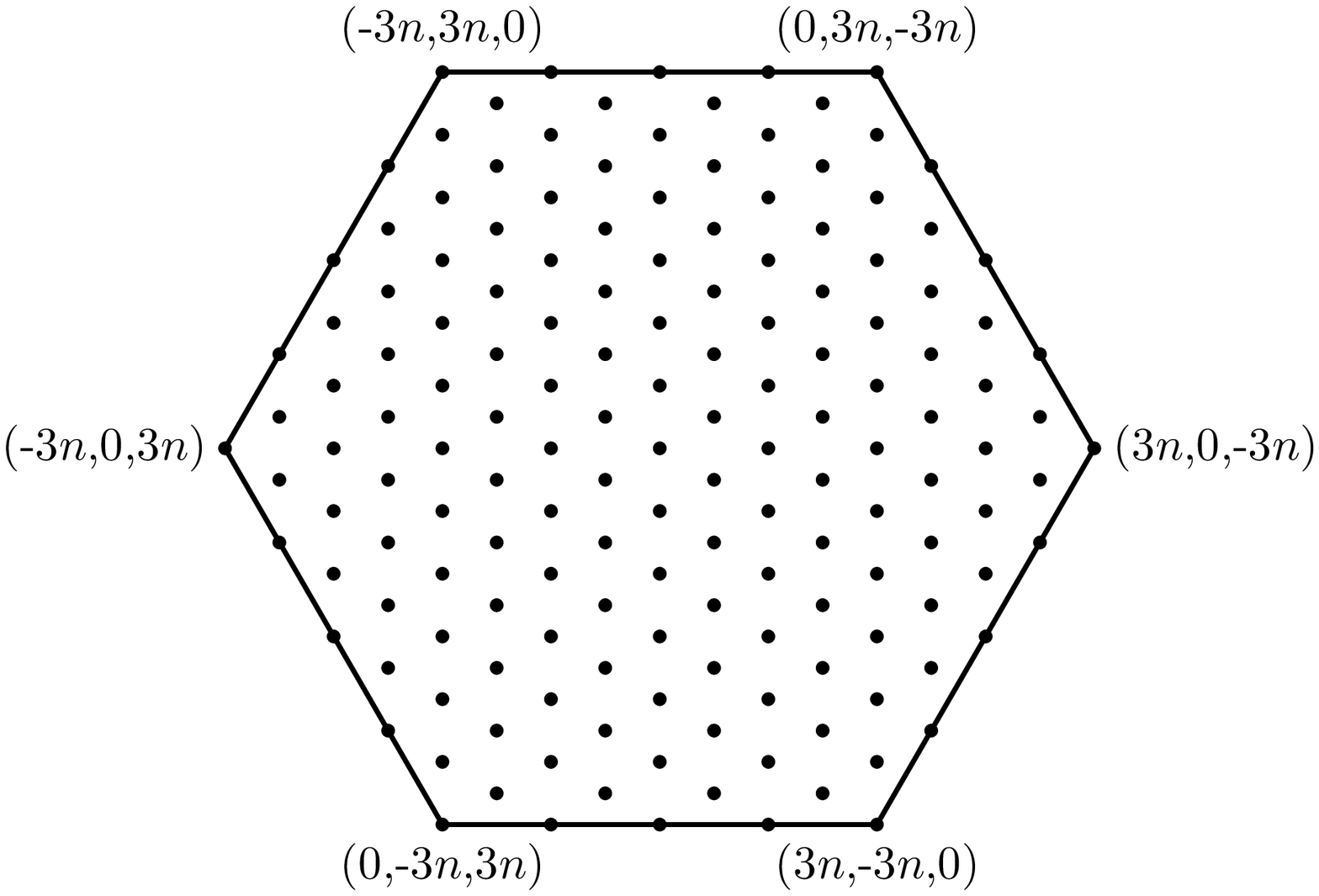}
\end{minipage}
\begin{minipage}{0.35\textwidth}\centering \includegraphics[width=1\textwidth]{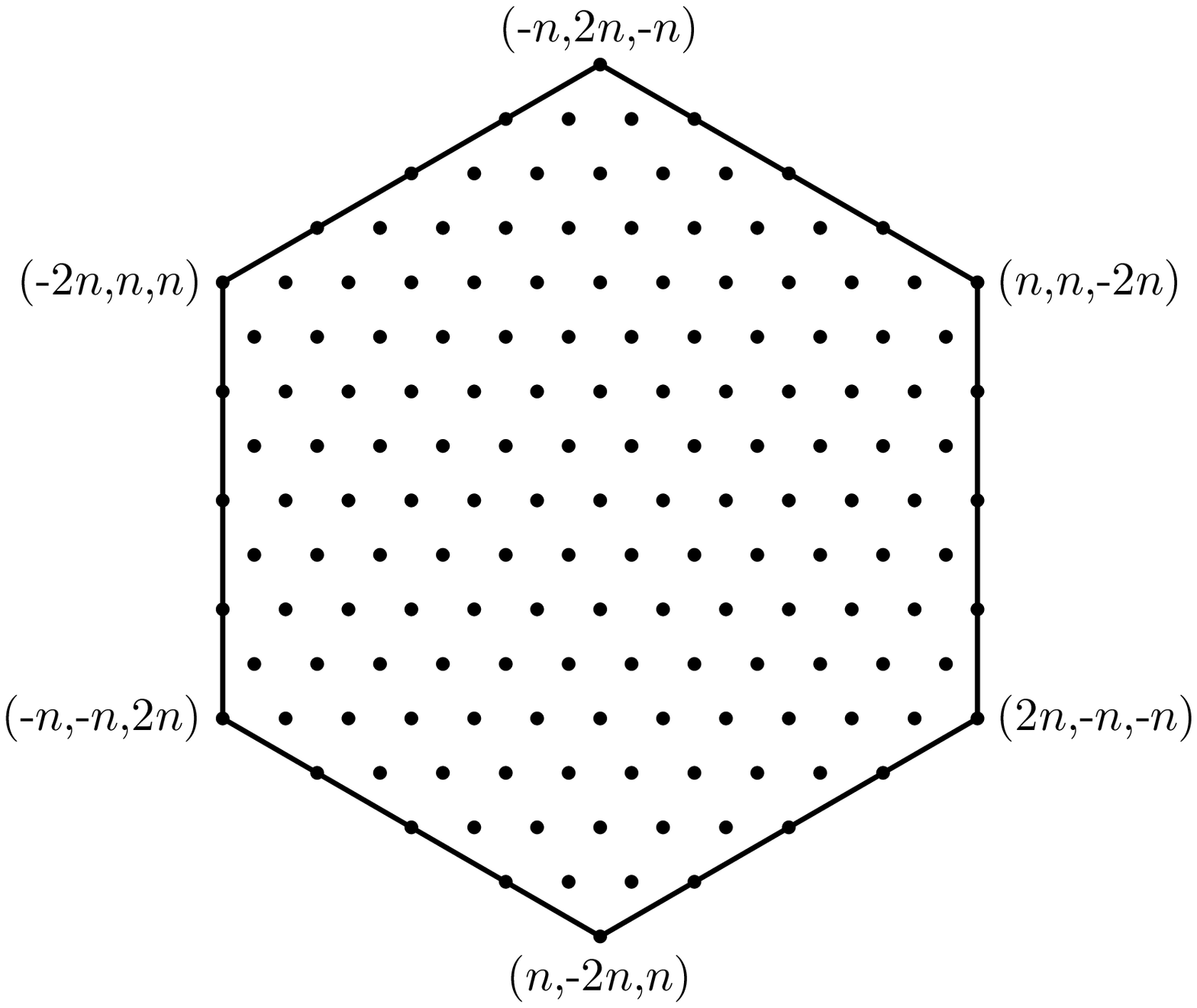}
\end{minipage}\caption{ The set $\KK^*_{3n}$  (left) and the set
   $\KK^{\dag*}_{3n}$ (right).}
\label{K*K}
 \end{figure}
The two sets $\KK_{3n}^*$ and $\KK_{3n}^{\dag*}$ take different shapes, which we
depict in Figure \ref{K*K}. Define
\begin{equation} \label{ipHH2}
\la f,g\ra_n:= \frac{1}{n^2} \sum_{\jb \in \KK_n} f(\tfrac{\jb}{n})\overline{f(\tfrac{\jb}{n})}
  \quad \hbox{and}\quad
\la f,g\ra_n^*=\frac{1}{n^2} \sum_{\jb \in \KK_n^*} c_{\jb}^{(n)}f(\tfrac{\jb}{n})
    \overline{f(\tfrac{\jb}{n})},
\end{equation}
where $c_{\jb}^{(n)}$ are as defined in \eqref{cuba-HH} with $\HH_n$ replaced by $\KK_n$.

\begin{thm}\label{thm:HH2ip}
For $n \ge 0$,
$ \frac{1}{|\Omega|}\int_\Omega f(x) \overline{g(x)} dx  = \la f, g\ra_n = \la f, g\ra_n^*$
for $f, g \in \CK_n$. In particular, $\la \phi_\jb, \phi_\kb\ra_n^* = 1$ if $\hat \jb \equiv \hat \kb \pmod{3n}$ and $\la \phi_\jb, \phi_\kb\ra_n^* = 0$ otherwise, for $\kb, \jb \in \KK_n$.
Moreover, we have the cubature
\begin{align}  \label{cuba-HHD}
   \frac{1}{|\Omega|} \int_{\Omega} f(\tb)d\tb  = \frac{1}{n^2} \sum_{\jb\in \CK_n^*} c_\jb^{(n)}
   f(\tfrac{\jb}{n}), \quad \forall f\in \CK_{2n-1}^*.
\end{align}
\end{thm}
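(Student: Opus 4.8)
The statement packages three assertions: the continuous inner product on $\Omega$ coincides with the two discrete inner products $\la\cdot,\cdot\ra_n$ and $\la\cdot,\cdot\ra_n^*$ on $\CK_n$; the explicit values of $\la\phi_\jb,\phi_\kb\ra_n^*$; and the cubature \eqref{cuba-HHD} exact for $\CK_{2n-1}^*$. The natural route is to deduce all of this from \thmref{thm:df} applied to the specific choice $A=H$, $B=nH^{-\tr}$, $N=nI$, and then to convert the non-symmetric node set $\KK_n$ into the symmetric $\KK_n^*$ by the usual periodicity argument already used in the Hexagon-Hexagon case (\thmref{thm:HH1}). First I would record that with $N=nI$ we have $|\det N|=n^2$, that the sampling points $B^{-\tr}j$ for $j\in\Lambda_N$ become, in homogeneous coordinates, the points $\tfrac{\jb}{n}$ with $\jb\in\KK_n$, and that $\Lambda_N^\dag$ becomes $\KK_n^\dag$, so that $\CH_N=\CK_n$; this is just unwinding Definition~\ref{def:N} together with the coordinate change \eqref{coordinates} and the identity $k^\tr H^{-1}x=\tfrac13\kb^\tr\tb$ recalled in \secref{Hexagon-Hexagon}. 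Then \thmref{thm:df} immediately gives $\frac{1}{|\Omega|}\int_\Omega f\bar g\,=\la f,g\ra_n$ for $f,g\in\CK_n$.

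Next I would pass from $\KK_n$ to $\KK_n^*$. The point is that $\phi_\jb$ is periodic with respect to $H\ZZ^2$, i.e. $\phi_\jb(\tb+\mb)=\phi_\jb(\tb)$ whenever $\mb\equiv 0\pmod 3$; hence a node $\tfrac{\jb}{n}$ on the boundary of $\Omega$ can be identified with a congruent node on the opposite face, and summing with the weights $c_\jb^{(n)}$ of \eqref{cuba-HH} (now with $\HH_n$ replaced by $\KK_n$) simply redistributes the mass of each such boundary orbit. Concretely, the faces of the hexagon $\Omega$ come in opposite pairs and the lattice $H\ZZ^2$ identifies them; a point on an edge (not a vertex) has exactly one congruent partner among the candidate nodes, and a vertex has two, which is exactly why the weights are $\tfrac12$ on $\KK_n^e$ and $\tfrac13$ on $\KK_n^v$. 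This yields $\la f,g\ra_n=\la f,g\ra_n^*$ on $\CK_n$, and combined with the previous paragraph gives the first displayed chain of equalities. Taking $g\equiv 1$ (which lies in $\CK_n$ for $n\ge1$) turns this into the cubature \eqref{cuba-HHD}, once one checks the degree bookkeeping: the product of two functions in $\CK_{2n-1}^*$, or more to the point a single function $f\in\CK_{2n-1}^*$ tested against $1$, lies in $\CK_{?}$ in the range where the identity is valid — here $\phi_\jb\phi_\kb=\phi_{\jb+\kb}$ and $\jb,\kb\in\KK_{2n-1}^{\dag*}$ forces $\jb+\kb$ into the set where orthogonality holds, exactly as in the Square-Square computation following \eqref{classical_ip}.

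Finally, the explicit formula for $\la\phi_\jb,\phi_\kb\ra_n^*$ with $\jb,\kb\in\KK_n$: since the $\phi$'s form an orthonormal basis of $\CK_n$ under $\la\cdot,\cdot\ra_n^*$ by what precedes, one gets $1$ when $\jb=\kb$ and $0$ otherwise as functions; the refinement to ``$\hat\jb\equiv\hat\kb\pmod{3n}$'' comes from \propref{prop:H*H}, which says the rotation $\kb\mapsto\wh\kb$ carries $\KK_n^*$ onto $3\KK_n^{\dag*}$ and intertwines the congruence $\pmod{3n}$ on the $\wh{\ }$-side with the equality of exponential functions on the $\phi$-side — i.e. $\phi_\jb=\phi_\kb$ on the node set precisely when $\wh\jb\equiv\wh\kb\pmod{3n}$. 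The main obstacle is the boundary combinatorics of the second step: one must verify carefully that the opposite-face identification of the rotated hexagon $\Omega_B$ pairs up precisely the points of $\KK_n^*\setminus\KK_n$ with interior-or-boundary points already counted, and that the weights $c_\jb^{(n)}$ compensate exactly — a tedious but entirely mechanical check, of the same flavor as the one behind \thmref{thm:HH1}, complicated slightly by the $\jb\equiv 0\pmod 3$ constraint defining $\KK_n$. Everything else is bookkeeping of coordinate changes plus a direct appeal to \thmref{thm:df}.
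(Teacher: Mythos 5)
Your plan is correct and follows essentially the same route the paper takes: the identity $\frac{1}{|\Omega|}\int_\Omega f\bar g = \la f,g\ra_n$ is a direct instance of Theorem~\ref{thm:df} with $A=H$, $B=nH^{-\tr}$, the passage to the symmetric node set $\KK_n^*$ and the cubature \eqref{cuba-HHD} are obtained by periodicity exactly as in Theorem~\ref{thm:HH1}, and the congruence form of the orthogonality relations comes from Proposition~\ref{prop:H*H} — which is precisely the (sketched) argument the authors give. The only blemish is the harmless slip $\phi_\jb\overline{\phi_\kb}=\phi_{\jb-\kb}$ (not $\phi_{\jb+\kb}$) in the degree-doubling step.
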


The part of the theorem on $\la f, g\ra_n $ is exactly Theorem \ref{thm:df}, while the part
on $\la f, g\ra_n^*$ and the cubature can be proved by periodicity, just like the proof of
Theorem \ref{thm:HH1} in \cite{LSX1}, upon using the Proposition \ref{prop:H*H}.
The cubature \eqref{cuba-HHD} is already one in Stage 2; we can also derive a cubature
with nodes indexed by $\KK_n$ as in Stage 1.

Next we consider the invariant and anti-invariant functions under $\CA_2$, which are
the generalized cosines $\TC_\kb$ and the generalized sines $\TS_\kb$ considered
in the previous subsection. By restricting to such functions, we again obtain cubature
on the triangle $\Delta$. The index set of the nodes of the cubature, denoted by
$\Upsilon$, is
\begin{align*}
 \Upsilon _n := \{\jb \in \HH: 0 \le j_1,j_2, -j_3 \le n, \jb \equiv 0 \pmod{3}\}
\end{align*}
derived by symmetry from $\KK_n^*$, whereas the index set of the invariant functions
being integrated exactly by the cubature, denoted by $\Upsilon^\dag$, is derived
from $\KK_n^{\dag*}$,
\begin{align*}
 \Upsilon _n^\dag  = \{\jb \in \HH: 0 \le j_1,j_2, -j_3 \le n,j_2 - j_3 \le n, j_1-j_3 \le n\},
\end{align*}
which is inside a quadrilateral;
Figure \ref{QuadF} shows its relative position in $\Omega_B$.
\begin{figure}[h]
\centering
\begin{minipage}{0.4\textwidth}\centering \includegraphics[width=1\textwidth]{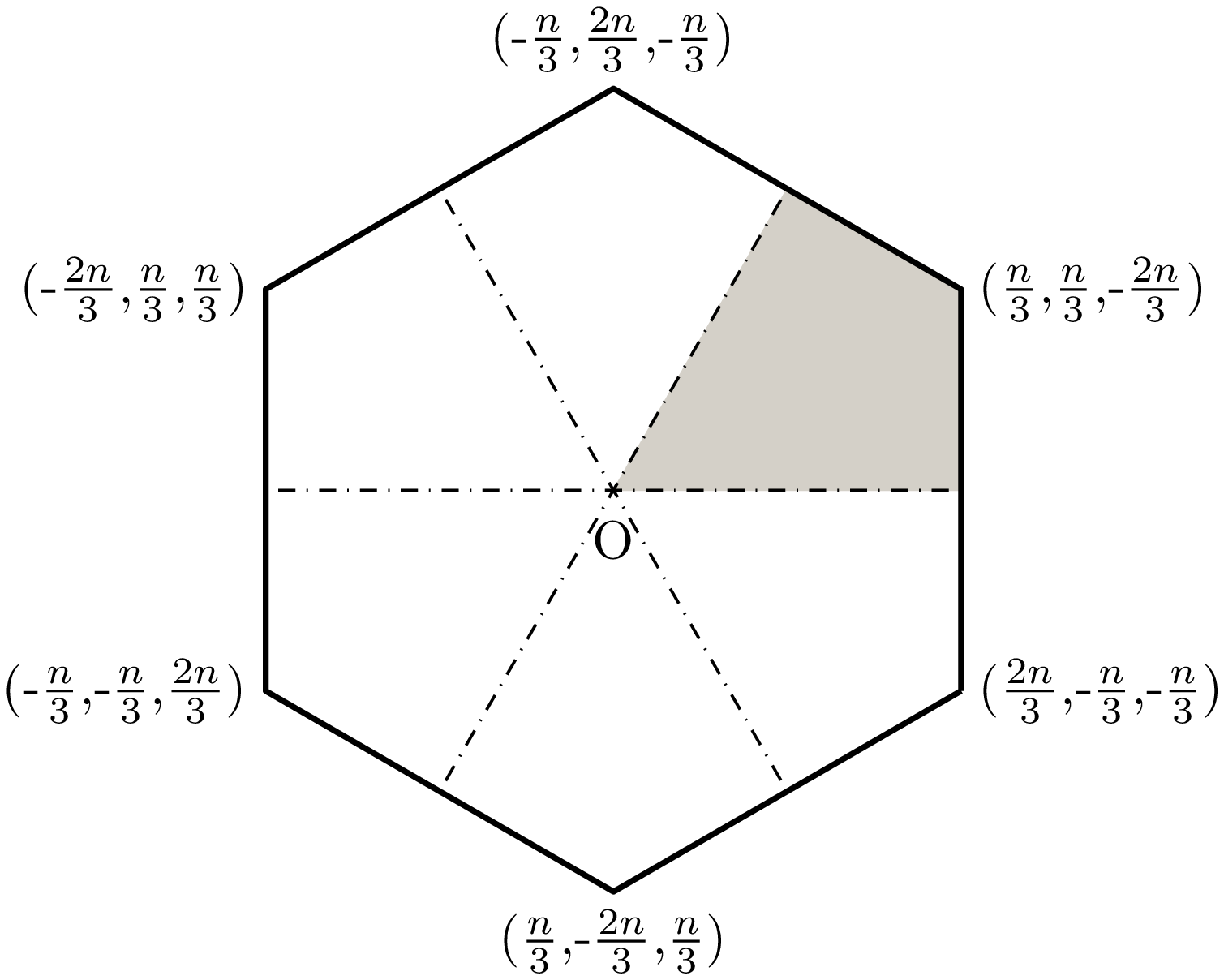}
\end{minipage}\qquad
\begin{minipage}{0.35\textwidth}\centering \includegraphics[width=1\textwidth]{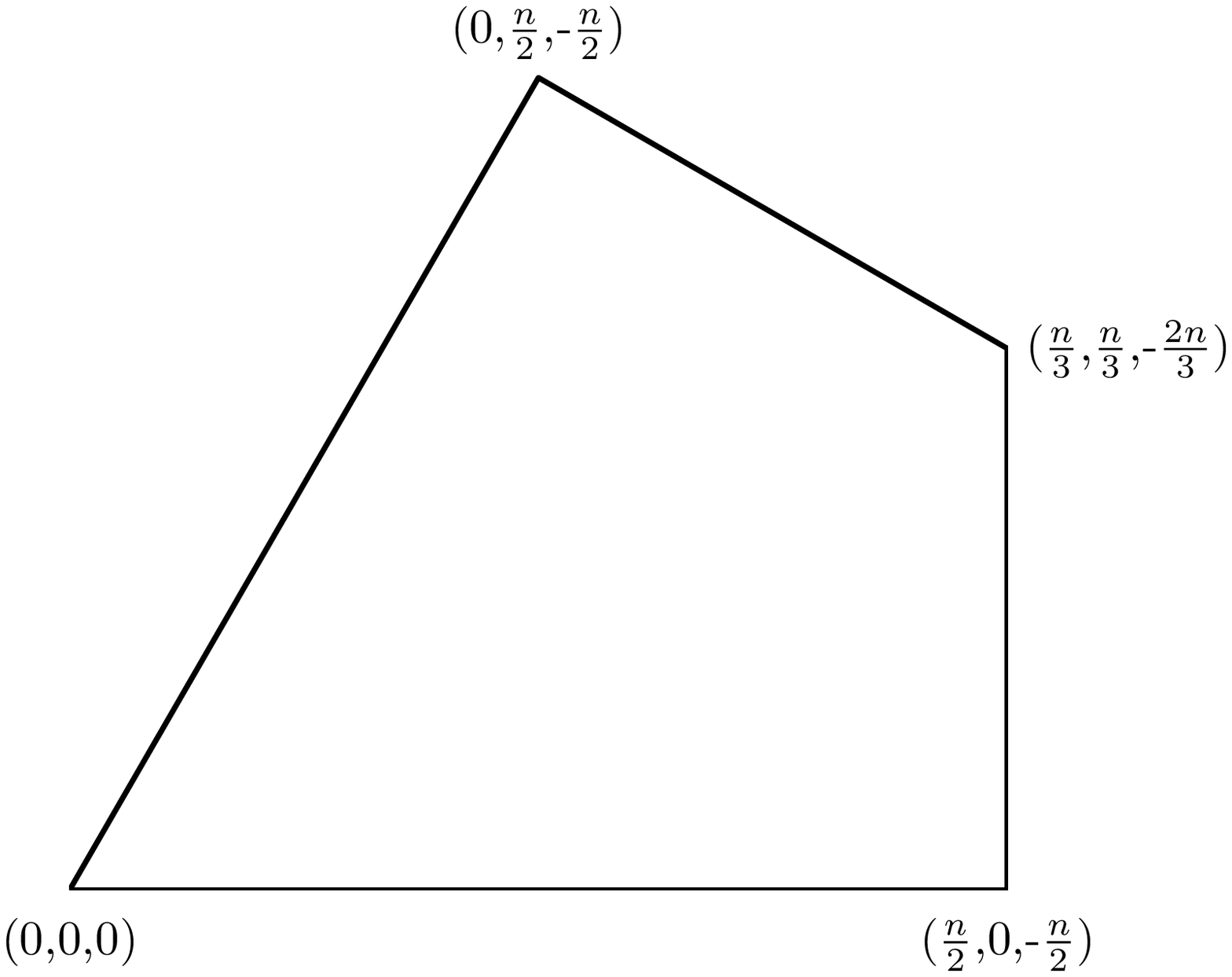}
\end{minipage}
\caption{The fundamental quadrilateral of $\Omega_B$ under $\A_2$}
\label{QuadF}
\end{figure}
We define the following subspaces of trigonometric functions,
$$
  \CTC_n = \sspan \{\TC_\kb: \kb \in \Upsilon_n^\dag\} \quad \hbox{and} \quad
  \CTS_n = \sspan \{\TS_\kb: \kb \in \Upsilon_n^{\dag \circ}\}.
$$
The set $\Upsilon_n$ takes a symmetric form when $n$ is a multiple of $3$. In
Figure \eqref{FourM} we depict the index sets $\Upsilon_{3n}$ and $\Upsilon_{3n}^\dag$.
\begin{figure}[h]
\centering
\begin{minipage}{0.3\textwidth}\centering \includegraphics[width=1\textwidth]{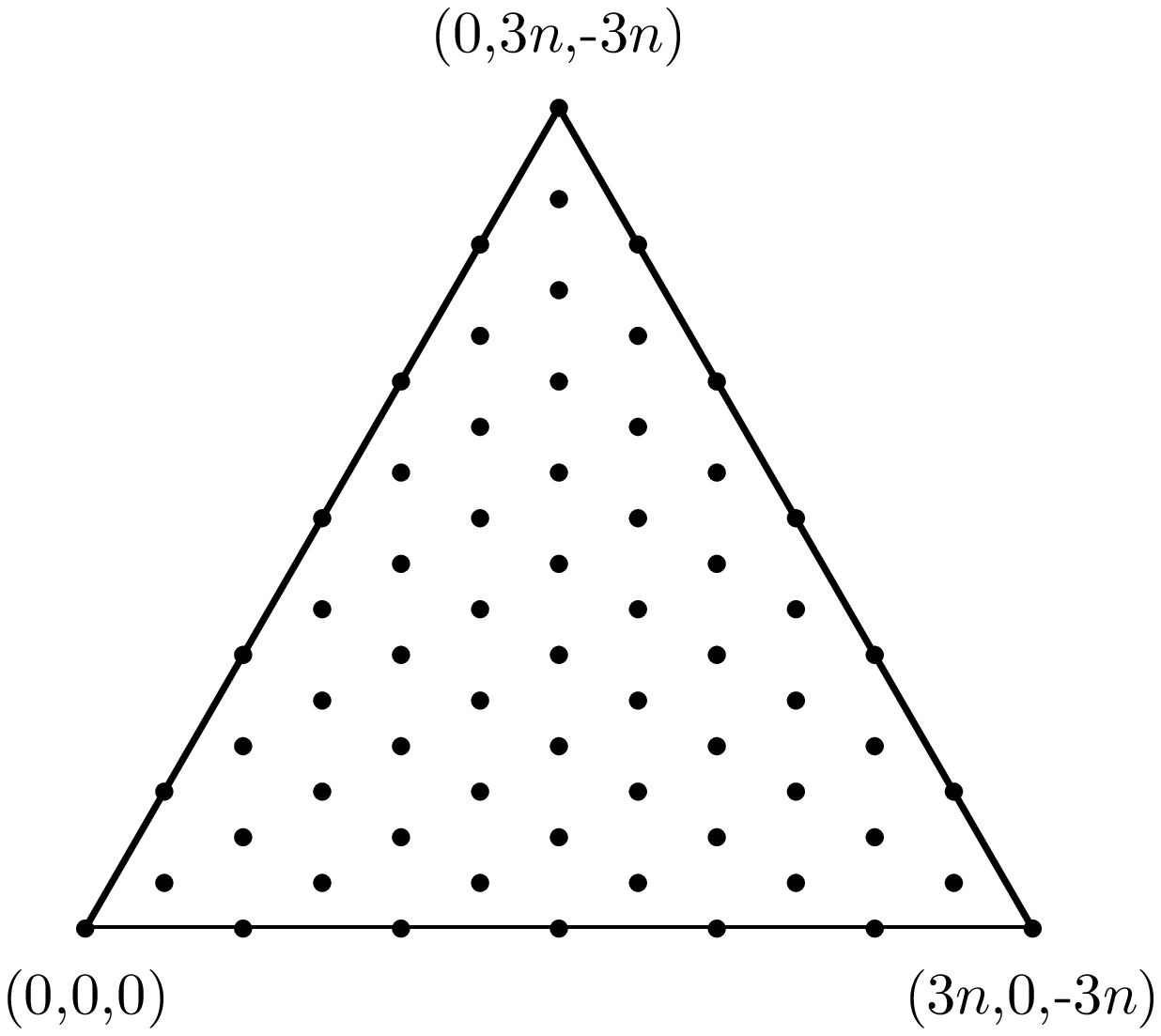}
\end{minipage}
\qquad
\begin{minipage}{0.31\textwidth}\centering \includegraphics[width=1\textwidth]{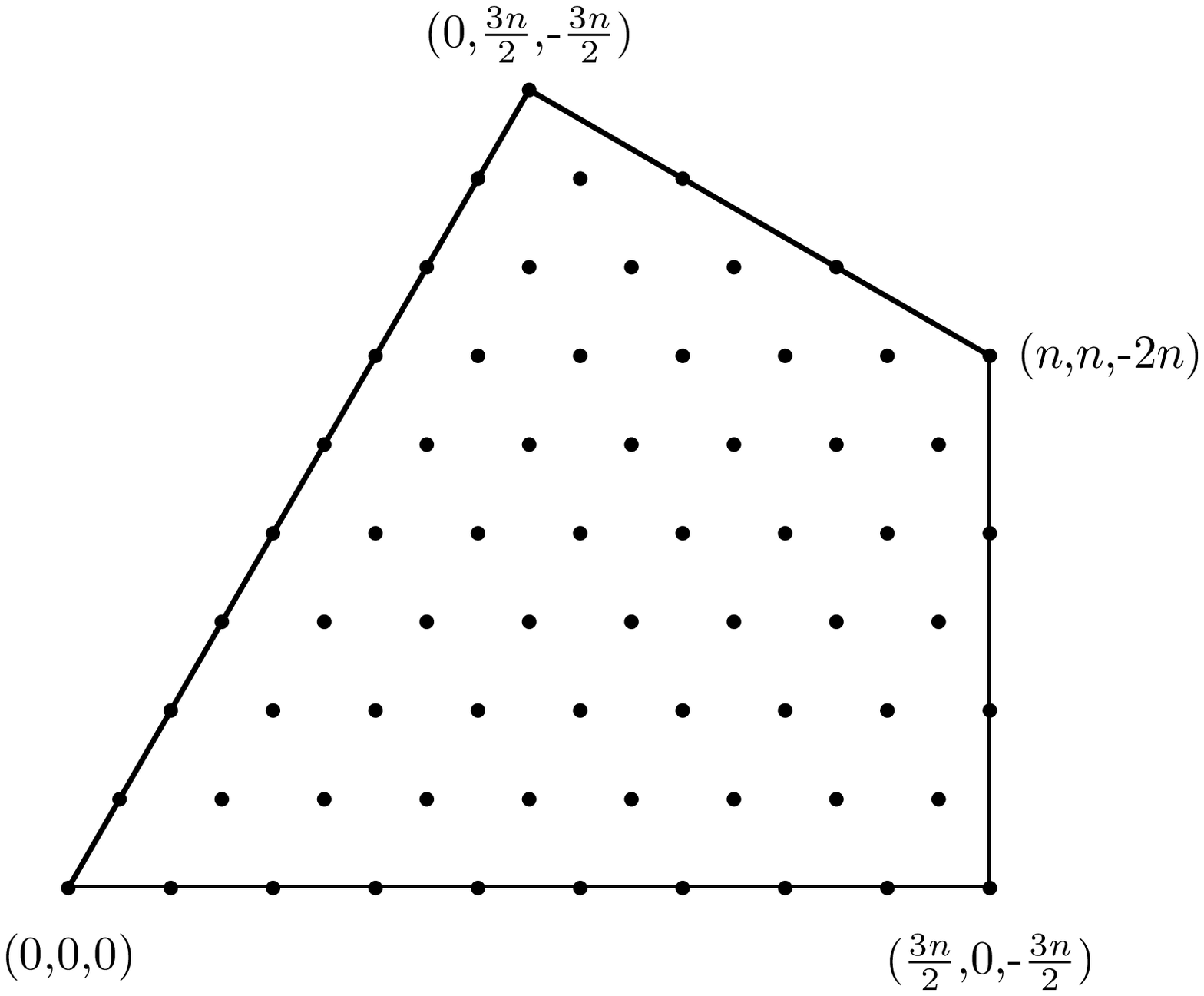}
\end{minipage}
\caption{the index sets $\Upsilon_{3n}$ (left) and $\Upsilon_{3n}^{\dag}$ (right).}
\label{FourM}
\end{figure}

\begin{thm} \label{thm:inner-triangle}
The following cubature is exact for all $f\in \CTC_{2n-1}$,
\begin{align} \label{cuba-HHT2}
\frac{1}{|\Delta|}\int_{\Delta}f(\tb) d\tb = \frac{1}{n^2} \sum_{\jb \in \Upsilon_n}
        \lambda_{\jb}^{(n)} f(\tfrac{\jb}{n}), \qquad \lambda_\jb^{(n)} : =  \begin{cases}
       6, & \jb \in \Upsilon^\circ_n,\\
       3 , & \jb \in \Upsilon^\e_n,\\
       1, & \jb \in \Upsilon^\ve_n.
       \end{cases}
\end{align}
\end{thm}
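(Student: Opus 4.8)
The plan is to obtain \eqref{cuba-HHT2} by restricting the cubature \eqref{cuba-HHD} of Theorem~\ref{thm:HH2ip} to the subspace of $\CA_2$-invariant functions, in exact parallel with the passage from \eqref{cuba-HH} to \eqref{cubaHH2} in the Hexagon-Hexagon case. First I would record that, since the lattice $L_B$ here is a $90^\circ$ rotation of the hexagon lattice, it is still invariant under the reflection group $\CA_2$ acting on $\RR_H^3$ by permuting the homogeneous coordinates; consequently $\Omega_B$ is $\CA_2$-invariant, the node set $\{\jb/n : \jb \in \KK_n^*\}$ of \eqref{cuba-HHD} is $\CA_2$-invariant (this is where Proposition~\ref{prop:H*H} and the congruence $\jb \equiv 0 \pmod 3$ defining $\KK_n$ enter), and the weights $c_\jb^{(n)}$ depend only on the $\CA_2$-orbit of $\jb$. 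The fundamental domain of $\Omega_B$ under $\CA_2$ is the triangle $\Delta$ of \eqref{Delta}, and I would note $|\Omega| = 6|\Delta|$ so that $\frac{1}{|\Omega|}\int_\Omega = \frac{1}{6|\Delta|}\int_{\Omega}$, while for an $\CA_2$-invariant $f$ one has $\int_\Omega f = 6\int_\Delta f$ minus the usual boundary-identification corrections on the lower-dimensional strata.

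Next I would apply \eqref{cuba-HHD} to a generalized cosine $\TC_\kb = \CP^+\phi_\kb$ with $\kb \in \Upsilon_n^\dag$. Because $\TC_{2n-1}$ is the span of such $\TC_\kb$ and each $\TC_\kb$ is a finite $\CA_2$-average of exponentials $\phi_{\kb\sigma}$ with $\kb\sigma \in \KK_{2n-1}^{\dag*}$ (here I need to check that $\Upsilon_n^\dag$, as defined by the inequalities $0 \le j_1, j_2, -j_3 \le n$, $j_2-j_3 \le n$, $j_1-j_3 \le n$, is precisely the set of $\CA_2$-representatives of $\KK_{2n-1}^{\dag*}$ lying in the positive cone $\Lambda$ — this is the combinatorial heart of the argument and the step I expect to be fussiest), the cubature \eqref{cuba-HHD} is exact for $\TC_\kb$. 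Summing the right-hand side of \eqref{cuba-HHD} over an $\CA_2$-orbit and collapsing orbits in $\KK_{2n-1}^*$ to their representatives in $\Upsilon_{2n-1}$, the interior orbits have size $6$, the edge orbits size $3$, and the vertex orbits (the three corners of $\Delta$) size $1$; multiplying the orbit sizes by the $c_\jb^{(n)}$-values $1, \tfrac12, \tfrac13$ and by the $6$ coming from $|\Omega| = 6|\Delta|$ produces exactly the weights $\lambda_\jb^{(n)} = 6, 3, 1$ claimed in \eqref{cuba-HHT2}.

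Finally I would verify that the set over which the right-hand side of \eqref{cuba-HHT2} is summed is indeed $\Upsilon_n$: a point $\jb \in \KK_{2n-1}^*$, reduced modulo $\CA_2$ into the cone $0 \le j_1, j_2, -j_3$, and then intersected with $\Omega_B$'s fundamental triangle, lands in $\{0 \le j_1, j_2, -j_3 \le n\} \cap \{\jb \equiv 0 \pmod 3\} = \Upsilon_n$; the congruence survives because $\KK_n$ was defined with $\jb \equiv 0 \pmod 3$ and the three generating reflections of $\CA_2$ (coordinate transpositions on $\RR_H^3$) preserve that congruence. The only genuine obstacle is the bookkeeping on the boundary strata: one must confirm that when an orbit of a boundary point of $\Delta$ is unfolded, the multiplicities from the $c_\jb^{(n)}$ in \eqref{cuba-HHD} (which already discount hexagon-edge and hexagon-vertex points by $\tfrac12$ and $\tfrac13$) combine correctly with the orbit-size discount to give $\lambda_\jb^{(n)}$ with no double counting along the edges $j_1 = 0$, $j_2 = 0$, $j_3 = -n$ of $\Delta$ and at its three corners. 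This is the same mechanism that was used for Theorem~\ref{thm:HH2}, so I would treat it by the identical argument, merely substituting $\KK_n$ for $\HH_n$ and $\Upsilon_n$ for the triangular index set there, and invoking Proposition~\ref{prop:H*H} wherever the earlier proof used the self-duality $\Lambda_n = \Lambda_n^\dag$ of the plain hexagon lattice.
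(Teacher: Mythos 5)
Your proposal follows essentially the same route as the paper, which derives \eqref{cuba-HHT2} from \eqref{cuba-HHD} by restricting to $\CA_2$-invariant functions and folding the orbit decomposition $\Omega = \bigcup_{\sigma\in\A_2}\{\tb\sigma : \tb\in\Delta\}$ onto the triangle, exactly as in the proof of \eqref{cubaHH2} in \cite{LSX1}. One caution: your phrase ``multiplying the orbit sizes by the $c_\jb^{(n)}$-values $1,\tfrac12,\tfrac13$ and by the $6$'' is not the right arithmetic as stated --- the factor $6$ from $|\Omega|=6|\Delta|$ cancels against the six-fold unfolding of the invariant integral, and the correct rule is simply $\lambda_\jb^{(n)}=\sum_{\kb\in\jb\A_2\cap\KK_n^*}c_\kb^{(n)}$, which yields $6,3,1$ because the two stratifications (walls of $\A_2$ versus boundary of the hexagon) happen to combine that way --- but you flag this boundary bookkeeping yourself and defer it to the argument of Theorem~\ref{thm:HH2}, which is precisely what the paper does.
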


The formula \eqref{cuba-HHT2} is derived from \eqref{cuba-HHD} by using the invariance
of the functions in $\CTC_{2n-1}$ and the fact
$\Omega = \big(\cup_{\sigma\in \A_2} \{ \tb\sigma :  \tb \in \Delta^{\circ}\} \big) \cup \big( \cup_{\sigma\in \A_2} \{ \tb\sigma :  \tb \in \partial\Delta \} \big)$. As the proof
is similar to that of \eqref{cubaHH2} in \cite{LSX1}, we shall omit the details.

Similarly, we can also derive a cubature for $\CTS_{2n-1}$ based on points in
$\Upsilon_n^\circ$. These are cubature in Stage 3. We note that the set of nodes
in \eqref{cuba-HHT2} is different from that in \eqref{cubaHH2}, see
Figure 2 and Figure 8, even though both are on the triangle.

As in the case of Hexagon-Hexagon, we can continue to Stage 4, where the cubature
\eqref{cuba-HHT2} is mapped by the change of variables \eqref{x-y} to an algebraic
cubature for $w_{-1/2}(x,y) dxdy$ on $\Delta^*$, the region bounded by SteinerÕs
hypocycloid, which is a cubature exact for all polynomials in $\Pi_{2n-1}^2$ but with
many more nodes than the one derived in the Hexagon-Hexagon case.

The set of cubature points in \eqref{cuba-HHT2} and its image in the region bounded by
Steiner's hypocycloid have also been studied in \cite{Mun}.

\subsubsection{Interpolation} Applying Theorem \ref{prop:interpolation} to the current
set up, we obtain an interpolation operator $\CI_n f$ that interpolates $f$ on $\KK_n$
in the hexagon. We would like to consider interpolation on the triangle based on points
in $\Upsilon_n$. For this purpose, we first construct a near interpolation operator on
the symmetric set of points $\KK_n^*$.

\begin{thm} \label{prop:tH-interpo}
Let $\S_\jb: = \left\{\kb \in \KK_n^*:  \kb  \equiv  \jb  \pmod {3n} \right\}.$
For $f \in C(\Omega)$, define
$$
 \CI_n^* f (\tb): = \sum_{\jb \in \KK_n^*}f(\tfrac{\jb}{n}) \Phi_n(\tb - \tfrac{\jb}{n}), \quad
     \Phi_n(\tb) := \frac{1}{n^2} \sum_{\jb \in \KK_n^{\dag*}} c_{\wh \jb}^{(n)}
         \phi_\jb(\tb).
$$
Then $\CI_n^* f \in \CK_n^*$ and $\CI_n^* f (\tfrac{\jb}{n}) = f(\tfrac{\jb}{n})$ if
$\jb \in \KK_n^\circ$, and $\CI_n^* f (\tfrac{\jb}{n}) = \sum_{\kb \in \S_\jb} f(\frac{\kb}{n})$
if $\jb \in \partial \KK_n^*$, the boundary of $\KK_n^*$. Furthermore, $\Phi_n^*(\tb)$ is a
real function and it is given by the following formula when $n=0 \pmod 3$,
\begin{align} \label{tPhi_n}
  \Phi_n(\tb) = & \frac{1}{n^2} \left[ -\frac{1}{2} \sum_{i=1}^3
    \frac{(2\cos \pi s_i+\cos \pi t_i) \sin \pi t_i \cos \frac{2\pi n t_i }{3} }
     {\sin \pi t_1 \sin \pi t_2 \sin \pi t_3}  \right. \\
  & \qquad
      \left.  - \frac{1}{3} \big( \cos \tfrac{2n\pi t_1 }{3}+ \cos \tfrac{2n\pi t_2}{3}
                + \cos  \tfrac{2n\pi s_3 }{3} \big) \right],    \notag
\end{align}
where $s_1=\frac{t_3-t_2}{3}, s_2=\frac{t_1-t_3}{3}, s_3=\frac{t_2-t_1}{3}$.
\end{thm}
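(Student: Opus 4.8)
The plan is to establish the three assertions in turn: (i) $\CI_n^* f \in \CK_n^*$; (ii) the interpolation/near-interpolation identities at the nodes $\frac{\jb}{n}$; and (iii) the closed trigonometric formula for $\Phi_n$ when $n \equiv 0 \pmod 3$. For (i), observe directly from the definition that $\Phi_n \in \sspan\{\phi_\jb : \jb \in \KK_n^{\dag*}\} = \CK_n^*$, and since $\CK_n^*$ is a space of exponentials periodic in $H$ (equivalently, $\tb \mapsto \tb + \jb$ for $\jb \equiv 0 \pmod 3$), each translate $\Phi_n(\cdot - \frac{\jb}{n})$ stays in $\CK_n^*$; hence so does the finite linear combination $\CI_n^* f$.

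For (ii), the key computation is to evaluate $\Phi_n(\frac{\kb - \jb}{n})$ for $\kb,\jb \in \KK_n^*$. Writing $\Phi_n(\tfrac{\kb-\jb}{n}) = \frac{1}{n^2}\sum_{\mb \in \KK_n^{\dag*}} c_{\wh\mb}^{(n)} \e^{\frac{2\pi i}{3n}\mb^\tr(\kb-\jb)}$ and using Proposition~\ref{prop:H*H} to reparametrize the sum over $\KK_n^{\dag*}$ via $\mb = \tfrac{\wh\lb}{3}$ with $\lb$ ranging over (a symmetric version of) $\KK_n^*$, the exponent $\mb^\tr(\kb-\jb)$ becomes (up to the $\frac13$ factor) $\wh\lb^\tr(\kb-\jb)$, and one checks the antisymmetry identity $\wh\lb^\tr \kb = \lb^\tr \wh\kb$ (a routine $3\times 3$ skew-symmetric bilinear-form check on $\RR_H^3$). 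This converts $\Phi_n(\tfrac{\kb-\jb}{n})$ into the reproducing kernel of the discrete inner product $\la\cdot,\cdot\ra_n^*$ from \eqref{ipHH2} evaluated at $\phi_{\wh\kb}, \phi_{\wh\jb}$. Theorem~\ref{thm:HH2ip} then gives $\la \phi_{\wh\kb}, \phi_{\wh\jb}\ra_n^* = 1$ when $\wh\kb \equiv \wh\jb \pmod{3n}$ and $0$ otherwise; translating back through $\wh{(\cdot)}$, one obtains $\Phi_n(\tfrac{\kb-\jb}{n}) = \delta$-like behavior on $\KK_n^\circ$ and the "folding" $\sum_{\kb \in \S_\jb}$ on $\partial\KK_n^*$, since the congruence $\wh\kb \equiv \wh\jb \pmod{3n}$ is exactly the relation defining $\S_\jb$. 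Summing against $f(\tfrac{\jb}{n})$ yields the stated values of $\CI_n^* f$ at the nodes. That $\Phi_n$ is real follows because $\KK_n^{\dag*}$ and the weights $c_{\wh\jb}^{(n)}$ are symmetric under $\jb \mapsto -\jb$, so the sum equals its conjugate.

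For (iii), the approach is a direct summation of the geometric-type series defining $\Phi_n$ under the hypothesis $n \equiv 0 \pmod 3$, which makes $\KK_n^{\dag*}$ (equivalently $\Upsilon$-type sets) align with exact hexagonal/triangular lattice boundaries so that the boundary weights $c_{\wh\jb}^{(n)}$ telescope cleanly. The strategy is to split $\KK_n^{\dag*}$ into its interior part (weight $1$) plus half-weighted edges plus third-weighted vertices, write each piece using the homogeneous-coordinate variables $t_i$, and sum the resulting three one-parameter exponential sums $\sum \e^{\frac{2\pi i}{3}(\cdots)}$ along the three lattice directions; each produces a Dirichlet-kernel-type ratio, and the cross-terms combine into the $\frac{(2\cos\pi s_i + \cos\pi t_i)\sin\pi t_i \cos\frac{2\pi n t_i}{3}}{\sin\pi t_1 \sin\pi t_2 \sin\pi t_3}$ shape after using $t_1+t_2+t_3=0$ and the half-angle substitutions $s_i = \frac{t_{i-1}-t_{i+1}}{3}$. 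The correction term $-\frac13(\cos\frac{2n\pi t_1}{3} + \cos\frac{2n\pi t_2}{3} + \cos\frac{2n\pi s_3}{3})$ accounts for the vertex/edge weight deficit relative to the full unweighted interior sum. The main obstacle will be step (iii): bookkeeping the boundary-weight corrections of $\KK_n^{\dag*}$ correctly and coaxing the three partial sums into the single symmetric closed form requires careful trigonometric manipulation, and it is here that the hypothesis $n \equiv 0 \pmod 3$ is essential (the other residue classes give messier, non-uniform formulas, as already signaled by the dimension count $|\KK_n^*| = n^2+n\pm 1$). Steps (i) and (ii), by contrast, are structural consequences of Theorem~\ref{thm:HH2ip} and Proposition~\ref{prop:H*H} and should go through with only the bilinear-form identity $\wh\lb^\tr\kb = \lb^\tr\wh\kb$ as a lemma.
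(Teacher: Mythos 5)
Your parts (i) and (ii) follow essentially the same route as the paper: transfer the sum defining $\Phi_n$ from $\KK_n^{\dag*}$ to $\KK_n^*$ via Proposition \ref{prop:H*H}, identify $\Phi_n(\tfrac{\kb-\jb}{n})$ with the discrete reproducing kernel $\la\phi_{\wh\kb},\phi_{\wh\jb}\ra_n^*$, and invoke Theorem \ref{thm:HH2ip}. One small correction: the bilinear identity you propose as a lemma, $\wh\lb^\tr\kb=\lb^\tr\wh\kb$, is false as written (a direct expansion gives $\wh\lb\cdot\kb=-\lb\cdot\wh\kb$); the identity the argument actually needs is $\kb\cdot\tb=\wh\kb\cdot\sb$ with $\sb=\tfrac{\wh\tb}{3}$, equivalently $\kb\cdot\tb=\tfrac13\,\wh\kb\cdot\wh\tb$, which is what makes $\phi_\jb(\tb)=\phi_{\wh\jb}(\sb)$ and hence permits the reindexing. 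With that fix, (i) and (ii) are sound.

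The genuine gap is in part (iii). Your plan --- split $\KK_n^{\dag*}$ into interior, edges and vertices according to the weights $c_{\wh\jb}^{(n)}$ and then sum ``three one-parameter exponential sums along the three lattice directions'' --- does not compute the kernel: the interior of $\KK_n^{\dag*}$ is a genuinely two-dimensional hexagonal index set and cannot be written as a union of one-parameter progressions, so no Dirichlet-kernel ratios emerge from that decomposition. The paper handles the two difficulties separately. First, the boundary weights are absorbed by the identity $n^2\Phi_n=\tfrac12(\Theta_n+\Theta_{n-1})-\TC_{\frac n3,\frac n3,-\frac{2n}{3}}$ (averaging the Dirichlet kernels of consecutive orders produces the weight $\tfrac12$ on edges, and the extra $\TC$ term corrects the vertex weight $\tfrac13$); your proposed correction term $-\tfrac13(\cdots)$ is the \emph{result} of this, not an input. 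Second, and this is the step your sketch is missing entirely, the unweighted kernel $\Theta_n(\tb)=\sum_{\jb\in\KK_n^*}\phi_\jb(\sb)$ is evaluated by partitioning $\KK_n^*$ by the \emph{common residue of $j_1\equiv j_2\equiv j_3\pmod 3$}: the residue-$0$ class rescales to the known hexagonal Dirichlet kernel over $\HH_{\lfloor n/3\rfloor}^*$ from \cite{LSX1}, while the residue-$1$ class splits into three parallelogram-shaped subsets each summed as a product of two geometric series, and the residue-$2$ class is the negative of the residue-$1$ class, contributing the complex conjugate. Without this congruence-class decomposition (which is also where $n\equiv 0\pmod 3$ enters, making the three classes align), there is no visible path from your outline to the closed form \eqref{tPhi_n}.
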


\begin{proof}
By Proposition \ref{prop:H*H}, $\kb\in \KK_n^{*}$ implies $\frac{\wh \kb}{3} \in \KK_n^{\dag*}$,
and $\jb\in \KK_n^{\dag*}$ implies $\wh{\jb} \in \KK_n^{*}$.  By homogeneity,
$\kb\cdot \tb=(k_3-k_2)s_1+(k_1-k_3)s_2+(k_2-k_1)s_3 = \wh \kb \cdot \sb$. As a result,
\begin{align*}
\Phi_n(\tb) =
\frac{1}{n^2} \sum_{\jb \in \KK_n^{\dag*}} {c}_{\wh \jb}^{(n)} \phi_{\wh \jb}(\sb)
 =\frac{1}{n^2} \sum_{\jb \in \KK_n^{*}} {c}_{\jb}^{(n)} \phi_{\jb}(\sb).
\end{align*}
Consequently, by the definition in \eqref{ipHH2},
\begin{align*}
 \Phi_n (\tfrac{\kb-\jb}{n}) =
    \frac{1}{n^2} \sum_{\lb \in \KK_n^*} c_\lb^{(n)} \phi_\lb(\tfrac{\wh{\kb}-\wh{\jb}}{n})
=\frac{1}{n^2} \sum_{\lb \in \KK_n^*} c_\lb^{(n)} \phi_{\wh{\kb}-\wh{\jb}}(\tfrac{\lb}{n})
 = \la \phi_{\wh{\kb}},\phi_{\wh{\jb}} \ra^*_n.
\end{align*}
Thus, by Theorem \ref{thm:HH2ip}, it follows that
\begin{align} \label{k=jmod}
\begin{split}
   \Phi_n (\tfrac{\kb-\jb}{n}) = \la \phi_{\wh\kb}, \phi_{\wh\jb}\ra_{n}^* =
\begin{cases}
  1, & \wh\kb = \wh\jb + n \lb,\, \lb\in \HH,\\ 0, & \text{otherwise},
\end{cases}
\end{split}
\end{align}
which proves the stated result of $\CI_n^*f$.

To derive the compact formula for $\Phi_n$ we essentially need a formula for the
Dirichlet kernel, denoted by $\Theta_n(\tb)$, of the Fourier series over $\HH^{\dag*}_n$,
\begin{align*}
\Theta_n(\tb) = \sum_{\jb\in \HH^{\dag*}_n}  \phi_{\jb}(\tb).
\end{align*}
Indeed, by the definition of $c_\jb^{(n)}$, it follows that
\begin{align} \label{wPhi}
n^2 {\Phi}_n(\tb) = \frac{1}{2} \big(  {\Theta}_n(\tb) +  {\Theta}_{n-1}(\tb)\big)
- \begin{cases} \TC_{\frac{n}{3},\frac{n}{3},-\frac{2n}{3}}(\tb), & n \equiv 0 \pmod{3},\\
0, & otherwise.
\end{cases}
\end{align}

Using the identity $\kb\cdot \tb= \wh \kb \cdot \sb$ and Proposition \ref{prop:H*H}, we derive that
\begin{align*}
{\Theta}_n(\tb) = \sum_{\kb\in \KK_n^{\dag*}} \phi_\kb(\tb) =
    \sum_{\kb\in \KK_n^{\dag*}}\phi_{\wh \kb}(\sb) = \sum_{\jb\in \KK_n^{*}}\phi_{\jb}(\sb).
\end{align*}
We now partition $\KK_n^{*}$ into three parts according to the
congruence relation,
\begin{align*}
\KK_n^{(0)}:=\big\{ \jb \in {\KK}_n^{*}: j_1\equiv j_2\equiv j_3\equiv 0 \pmod 3\big\},\\
\KK_n^{(1)}:=\big\{ \jb \in {\KK}_n^{*}: j_1\equiv j_2\equiv j_3\equiv 1 \pmod 3\big\},\\
\KK_n^{(2)}:=\big\{ \jb \in {\KK}_n^{*}: j_1\equiv j_2\equiv j_3\equiv 2 \pmod 3\big\}.
\end{align*}
Using the fact that $\jb\in\KK_n^{(0)}  \Longleftrightarrow \frac{\jb}{3}\in \HH_{\lfloor\frac{n}{3}\rfloor}^*$, where $\HH_n^*$ is the
index defined in the previous subsection, and $\phi_{\jb}(\sb) = \phi_{\frac{\jb}{3}}(3\sb)$
if $\jb\in\KK_n^{(0)}$, we obtain from the Dirichlet kernel over $\HH_n^*$ in (3.10) of \cite{LSX1},
\begin{align} \label{SUM0}
\begin{split}
 \sum_{\jb\in \KK_n^{(0)}}\phi_{\jb}(\sb)
& = \prod_{j=1}^3 \frac{\sin \pi \lfloor\frac{n+3}{3}\rfloor t_j}{\sin \pi t_j}
    -\prod_{j=1}^3 \frac{\sin \pi \lfloor\frac{n}{3}\rfloor t_j}{\sin \pi t_j} \\
& = \sum_{j=1}^3 \frac{ \sin 2\pi \lfloor\frac{n}{3}\rfloor t_j - \sin 2\pi \lfloor\frac{n+3}{3}\rfloor t_j}
   {4\sin \pi t_1 \sin \pi t_2 \sin \pi t_3},
\end{split}
\end{align}
where we have used the identity $\sin 2t_1+\sin 2t_2+\sin2t_3=-4\sin t_1\sin t_2 \sin t_3$
in the last equal sign \cite[(3.15)]{LSX1}. Next we note that $\KK_n^{(1)}$ can be divided
into the following three (non-overlapping) subsets
$\KK_n^{(1)} = \KK_n^{(1,3)} \cup \KK_n^{(1,2)} \cup \KK_n^{(1,1)}$, where
\begin{align*}
&\KK_n^{(1,1)} = \left\{(-j_2-j_3,j_2,j_3):  j_2\equiv j_3 \equiv 1 \pmod3,\    1\le j_2,-j_3 \le n\right\},\\
&\KK_n^{(1,2)} = \left\{(j_1,-j_1-j_3,j_3): j_3\equiv j_1 \equiv 1 \pmod3,\    1\le j_3,-j_1 \le n\right\}, \\
&\KK_n^{(1,3)} = \left\{(j_1,j_2,-j_1-j_2):  j_1\equiv j_2 \equiv 1 \pmod3,\    1\le j_1,-j_2 \le n\right\}.
\end{align*}
Using the last set $\KK_n^{(1,3)}$, we define
\begin{align} \label{CI}
\begin{split}
\CI(t_1,t_2) :=&\ \sum_{\jb\in \KK_n^{(1,3)}}\phi_{\jb}(\sb) = \sum_{1\le j_1\le n \atop 3 | j_1-1} \sum_{1\le -j_2\le n \atop 3 |j_2-1} \e^{\frac{2i\pi}{3}(j_2t_1-j_1t_2 ) }
\\ =&\ \frac{\e^{-\frac{2i\pi t_2}{3}} (1-\e^{-2i\pi \lfloor \frac{n+2}{3}\rfloor t_2} )  }{1-\e^{-2i\pi t_2}}
\frac{\e^{-\frac{4i\pi t_1}{3}} (1-\e^{-2i\pi \lfloor \frac{n+1}{3}\rfloor t_1} )  }{1-\e^{-2i\pi t_1}}
\\ =&\ \frac{(\e^{-2i\pi s_2} -\e^{-2i\pi s_1}) (1-\e^{-2i\pi \lfloor \frac{n+1}{3}\rfloor t_1} )  (1-\e^{-2i\pi \lfloor \frac{n+2}{3}\rfloor t_2} )  }{(1-\e^{-2i\pi t_1}) (1-\e^{-2i\pi t_2}) (1-\e^{-2i\pi t_3}) },
\end{split}
\end{align}
where the second equal sign follows from $\jb \cdot \sb = j_1(s_1-s_3)+j_2(s_2-s_3)
=j_2t_1 -j_1t_2$. Moreover, we have $ \jb \cdot \sb = j_2t_1 -j_1t_2=j_3t_2 -j_2t_3 =
 j_1t_3 -j_3t_1$, which yields
\begin{align*}
\CI(t_2,t_3)=
\sum_{\jb\in \KK_n^{(1,1)}}\phi_{\jb}(\sb) \quad \hbox{and}\quad \CI(t_3,t_1)=
\sum_{\jb\in \KK_n^{(1,2)}}\phi_{\jb}(\sb).
\end{align*}
As a result, we conclude
\begin{align*}
  \sum_{\jb\in \KK_n^{(1)}}\phi_{\jb}(\sb) =  \CI(t_1,t_2)+\CI(t_2,t_3)+\CI(t_3,t_1).
\end{align*}
Furthermore, we note that $\KK_n^{(2)} =\left\{-\jb: \jb\in  \KK_n^{(1)} \right\}$ and, consequently,
\begin{align*}
\sum_{\jb\in \KK_n^{(1)}\cup \KK_n^{(2)}}\phi_{\jb}(\sb)& =
\sum_{\jb\in \KK_n^{(1)}}\phi_{\jb}(\sb) + \sum_{\jb\in \KK_n^{(2)}}\phi_{\jb}(\sb)
 = \sum_{\jb\in \KK_n^{(1)}}\phi_{\jb}(\sb) +
 \sum_{\jb\in \KK_n^{(1)}}\phi_{\jb}(-\sb) \\
& = \sum_{\jb\in \KK_n^{(1)}}\phi_{\jb}(\sb)+\sum_{\jb\in \KK_n^{(1)}} \overline{\phi_{\jb}(\sb)}
= 2\Re \{\CI(t_1,t_2) + \CI(t_2,t_3) + \CI(t_3,t_1) \}.
\end{align*}
Now assume that $n$ is a multiple of $3$. By using \eqref{CI} and the fact that $\tb$ is
homogeneous, we obtain
\begin{align*}
\begin{split}
\sum_{\jb\in \KK_n^{(1)}\cup \KK_n^{(2)} }   \phi_{\jb}(\sb)
=   2\Re &  \{ \CI(t_1,t_2))+2\Re(\CI(t_2,t_3))+2\Re(\CI(t_3,t_1) \} \\
=   2\Re & \left \{
 \frac{(\e^{-2i\pi s_2} -\e^{-2i\pi s_1}) (1+ \e^{2i\pi\frac{n}{3}t_3} -\e^{-2i\pi\frac{n}{3}t_1}-\e^{-2i\pi\frac{n}{3}t_2} ) }{(1-\e^{-2i\pi t_1}) (1-\e^{-2i\pi t_2}) (1-\e^{-2i\pi t_3}) }  \right.
\\ &+  \frac{(\e^{-2i\pi s_3} -\e^{-2i\pi s_2}) (1+ \e^{2i\pi\frac{n}{3}t_1} -\e^{-2i\pi\frac{n}{3}t_2}-\e^{-2i\pi\frac{n}{3}t_3} ) }{(1-\e^{-2i\pi t_1}) (1-\e^{-2i\pi t_2}) (1-\e^{-2i\pi t_3}) } \\
 & \left. + \frac{(\e^{-2i\pi s_1} -\e^{-2i\pi s_3}) (1+ \e^{2i\pi\frac{n}{3}t_2} -\e^{-2i\pi\frac{n}{3}t_3}-\e^{-2i\pi\frac{n}{3}t_1} ) }{(1-\e^{-2i\pi t_1}) (1-\e^{-2i\pi t_2}) (1-\e^{-2i\pi t_3}) } \right\}.
\end{split}
\end{align*}
Combining the numerators and collecting the terms in $1$, $\e^{2i\pi\frac{n}{3}\cdot}$
and $\e^{-2i\pi\frac{n}{3}\cdot}$, we obtain that the combined numerator is equal to
\begin{align*}
\begin{split}
 &\ (\e^{-2i\pi s_2} -\e^{-2i\pi s_1}) + (\e^{-2i\pi s_3} -\e^{-2i\pi s_2}) + (\e^{-2i\pi s_1} -\e^{-2i\pi s_3})
\\ &+ (\e^{-2i\pi s_2} -\e^{-2i\pi s_1}) \e^{2i\pi\frac{n}{3}t_3} -\big( (\e^{-2i\pi s_3} -\e^{-2i\pi s_2})+(\e^{-2i\pi s_1} -\e^{-2i\pi s_3})\big)\e^{-2i\pi\frac{n}{3}t_3}
\\ &+ (\e^{-2i\pi s_3} -\e^{-2i\pi s_2}) \e^{2i\pi\frac{n}{3}t_1} -\big( (\e^{-2i\pi s_1} -\e^{-2i\pi s_3})+(\e^{-2i\pi s_2} -\e^{-2i\pi s_1})\big)\e^{-2i\pi\frac{n}{3}t_1}
\\ &+ (\e^{-2i\pi s_1} -\e^{-2i\pi s_3}) \e^{2i\pi\frac{n}{3}t_2} -\big( (\e^{-2i\pi s_2} -\e^{-2i\pi s_1})+(\e^{-2i\pi s_3} -\e^{-2i\pi s_2})\big)\e^{-2i\pi\frac{n}{3}t_2}
\\
=&\  (\e^{-2i\pi s_2} -\e^{-2i\pi s_1}) (\e^{2i\pi\frac{n}{3}t_3}+\e^{-2i\pi\frac{n}{3}t_3})
    + (\e^{-2i\pi s_3} -\e^{-2i\pi s_2}) (\e^{2i\pi\frac{n}{3}t_1} + \e^{2i\pi\frac{n}{3}t_1} )
\\ &+ (\e^{-2i\pi s_1} -\e^{-2i\pi s_3}) (\e^{2i\pi\frac{n}{3}t_2}+\e^{-2i\pi\frac{n}{3}t_2})
\\ =&\  2  \cos \tfrac{2\pi n t_3}{3} \e^{i\pi s_3} (\e^{i\pi t_3} -\e^{-i\pi t_3})  +  2  \cos \tfrac{2\pi n t_1}{3} \e^{i\pi s_1} (\e^{i\pi t_1} -\e^{-i\pi t_1})
\\ &+  2  \cos \tfrac{2\pi n t_2}{3} \e^{i\pi s_2} (\e^{i\pi t_2} -\e^{-i\pi t_2})
\\ =&\  4i  \e^{i\pi s_3} \cos \tfrac{2\pi n t_3}{3}  \sin \pi t_3  + 4i\e^{i\pi s_1}   \cos \tfrac{2\pi n t_1}{3} \sin \pi t_1
+  4i\e^{i\pi s_2}  \cos \tfrac{2\pi n t_2}{3} \sin \pi t_2,
\end{split}
\end{align*}
where we use the facts that $ t_3=s_1 - s_2 , t_1=s_2-s_3, t_2=s_3-s_1$ and $s_1+s_2+s_3=0$  for the second equal sign. Using $t_1+t_2+t_3 =0$, the denominator becomes
\begin{align*}
(1-&\e^{-2i\pi t_1}) (1-\e^{-2i\pi t_2}) (1-\e^{-2i\pi t_3})
 =-8i \sin \pi t_1  \sin \pi t_2 \sin \pi t_3.
\end{align*}
Consequently, we derive that
\begin{align*} 
\begin{split}
& \sum_{\jb\in \KK_n^{(1)}\cup \KK_n^{(2)}}   \phi_{\jb}(\sb) \\
  & \qquad =   2\Re \frac{\e^{i\pi s_3} \cos \tfrac{2\pi n t_3}{3}  \sin \pi t_3  + \e^{i\pi s_1}   \cos \tfrac{2\pi n t_1}{3} \sin \pi t_1
+  \e^{i\pi s_2}  \cos \tfrac{2\pi n t_2}{3} \sin \pi t_2}{-2 \sin \pi t_1  \sin \pi t_2 \sin \pi t_3} \\
  &  \qquad = -\sum_{j=1}^3
\frac{\cos \frac{2\pi n t_j}{3} \cos \pi s_j \sin \pi t_j}{ \sin \pi t_1 \sin \pi t_2  \sin \pi t_3}.
\end{split}
\end{align*}
Combining the above equation with \eqref{wPhi} and \eqref{SUM0}, we obtain
\begin{align*}
& n^2 \Phi_n (\tb) = \frac{1}{2} \big( {\Theta}_n(\tb) + {\Theta}_{n-1}(\tb)\big)
- \TC_{\frac{n}{3},\frac{n}{3},-\frac{2n}{3}}(\tb)
\\ =&\ \frac{1}{2}\bigg(\sum_{\jb\in \KK_n^{(1)} } \phi_{\jb}(\sb) +
 \sum_{\jb\in \KK_n^{(1)}\cup \KK_n^{(2)}}\phi_{\jb}(\sb)+
 \sum_{\jb\in \KK_{n-3}^{(1)} } \phi_{\jb}(\sb)
 \bigg)
- \TC_{\frac{n}{3},\frac{n}{3},-\frac{2n}{3}}(\tb)
\\ =& \sum_{j=1}^3 \frac{ \sin \frac{2\pi (n-3) t_j}{3}- \sin \frac{2\pi (n+3) t_j}{3}
}{8\sin \pi t_1 \sin \pi t_2 \sin \pi t_3} -\sum_{j=1}^3
\frac{\cos \frac{2\pi n t_j}{3} \cos \pi s_j \sin \pi t_j}{ \sin \pi t_1 \sin \pi t_2  \sin \pi t_3}
- \TC_{\frac{n}{3},\frac{n}{3},-\frac{2n}{3}}(\tb)
\\ =& -\sum_{j=1}^3 \frac{ \cos \frac{2\pi n t_j}{3} \sin \pi t_j \cos \pi t_j
}{2\sin \pi t_1 \sin \pi t_2 \sin \pi t_3} -\sum_{j=1}^3
\frac{\cos \frac{2\pi n t_j}{3} \cos \pi s_j \sin \pi t_j}{ \sin \pi t_1 \sin \pi t_2  \sin \pi t_3}
- \TC_{\frac{n}{3},\frac{n}{3},-\frac{2n}{3}}(\tb)
\\ =& -\sum_{j=1}^3 \frac{ \cos \frac{2\pi n t_j}{3} \sin \pi t_j (\cos \pi t_j+2\cos\pi s_j)
}{2\sin \pi t_1 \sin \pi t_2 \sin \pi t_3}
- \frac{1}{3} \big(\cos \tfrac{2\pi n t_1}{3} + \cos \tfrac{2\pi n t_2}{3}+ \cos \tfrac{2\pi n t_3}{3} \big).
\end{align*}
This completes the proof.
\end{proof}

We now proceed to interpolation on the triangle $\Delta$. The idea is to use the
periodicity and apply the operator $\CP^\pm$ in \eqref{CP-pm} on the interpolation
$\CI_n f$, as in Theorem 4.7 in \cite{LSX1}. First we apply $\CP^-$ on $\CI_n f$, which
gives the following:

\begin{thm} \label{prop:1st-interpo-Dela}
For $n\ge 0$ and $f\in C(\Delta)$ define
$$
 \CL_n f (\tb) : = \sum_{\jb \in \Upsilon_n^{\circ}} f(\tfrac{\jb}{n})
      \ell_{\jb,n}^{\circ} (\tb), \qquad
       {\ell}_{\jb,n}^\circ (\tb) = \frac{6}{n^2} \sum_{\kb \in \Upsilon_n^{\dag\circ}}
        \wh{\lambda}_{\kb}^{(n)} \TS_{\kb}(\tb)\overline{ \TS_{\kb}(\tfrac{\jb}{n}) },
$$
where
$$
\wh{\lambda}_{\kb}^{(n)} =  {c}_{\wh{\kb}}^{(3n)} |\kb \A_2| = \begin{cases}
6 , &  k_1,k_2, n+k_3-k_1, n+k_3-k_2 >0,\\
1 , &\kb=\mathbf{0},\\
2 , &\kb=(\frac{n}{3},\frac{n}{3},-\frac{2n}{3}),\\
\frac32 , &\kb=(\frac{n}{2},0,-\frac{n}{2}) \text{ or }  (0,\frac{n}{2},-\frac{n}{2}),\\
3, &otherwise.
\end{cases}
$$
Then $\CL_n$ is the unique function in $\CTS_n$ that satisfies
$
 \CL_nf(\tfrac{\jb}{n}) = f(\tfrac{\jb}{n})$, $\jb \in \Upsilon_n^{\circ}.
$
\end{thm}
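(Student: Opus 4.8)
The plan is to follow the pattern of Theorem 4.7 in \cite{LSX1}: produce $\CL_n$ by applying the anti-symmetrizer $\CP^-$ of \eqref{CP-pm} to the near-interpolation operator $\CI_n^*$ of Theorem \ref{prop:tH-interpo} (equivalently, to $\CI_n f$ combined with hexagonal periodicity, which replaces the node set $\KK_n$ by the symmetric $\KK_n^*$), and then to identify the resulting kernel through discrete orthogonality of the exponentials. Two facts drive the argument. First, $\TS_\kb = \CP^-\phi_\kb$ is anti-invariant under $\CA_2$, so $\{\TS_\kb : \kb \in \Upsilon_n^{\dag\circ}\}$ is a basis of $\CTS_n$ and every element of $\CTS_n$ vanishes at each node $\jb/n$ with $\jb \in \partial\Upsilon_n$, since such a point is fixed by a reflection of $\CA_2$ and $\Delta$ is a fundamental domain. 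Second, Theorem \ref{thm:HH2ip} supplies the exact identity $\la f,g\ra_\Omega = \la f,g\ra_n^*$ for $f,g \in \CK_n$ together with the pointwise orthogonality of the $\phi_\jb$ under $\la\cdot,\cdot\ra_n^*$, and the node set $\KK_n^*$ is $\CA_2$-invariant.

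The core step is to push the exponential orthogonality on $\KK_n^*$ down to a discrete orthogonality of the $\TS_\kb$ on the triangular node set $\Upsilon_n^\circ$. Since $\KK_n^*$ is $\CA_2$-invariant and $\Delta^\circ$ is a fundamental domain, each node $\jb/n$ with $\jb \in \Upsilon_n^\circ$ has exactly $|\CA_2| = 6$ preimages in $\KK_n^*$, all interior to $\Omega$ and hence carrying the interior weight $1$, while the boundary of $\KK_n^*$ contributes nothing once anti-invariant functions are paired. Substituting the $\CA_2$-invariant product $\TS_\kb\overline{\TS_\lb}$ into $\la\cdot,\cdot\ra_n^*$ and invoking Theorem \ref{thm:HH2ip} together with the orthogonality of the generalized sines over $\Delta$ then gives
$$
  \frac{6}{n^2}\sum_{\jb\in\Upsilon_n^\circ}\TS_\kb(\tfrac{\jb}{n})\,\overline{\TS_\lb(\tfrac{\jb}{n})}
    \;=\; \frac{1}{\wh\lambda_\kb^{(n)}}\,\delta_{\kb,\lb},\qquad \kb,\lb\in\Upsilon_n^{\dag\circ},
$$
where $\wh\lambda_\kb^{(n)}$ is the reciprocal of $\|\TS_\kb\|_\Omega^2$. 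Because $\CP^-\phi_\kb$ is a signed sum over the $\CA_2$-orbit of $\kb$ and the $\phi_{\kb\sigma}$ are orthonormal, $\|\TS_\kb\|_\Omega^2$ is, up to a universal constant, the size of that orbit, which equals $6$ for generic $\kb$ but drops at the finitely many $\kb\in\Upsilon_n^{\dag\circ}$ that sit on a symmetry wall of the fundamental quadrilateral — namely $\kb=0$, $\kb=(\tfrac n3,\tfrac n3,-\tfrac{2n}{3})$ and $\kb=(\tfrac n2,0,-\tfrac n2)$ with its image $(0,\tfrac n2,-\tfrac n2)$, present only for the appropriate residues of $n$. Combining this orbit count with the boundary weight $c^{(3n)}_{\wh\kb}$ inherited through the rotation $\kb\mapsto\wh\kb$ of Proposition \ref{prop:H*H} produces the stated formula $\wh\lambda_\kb^{(n)} = c^{(3n)}_{\wh\kb}\,|\kb\CA_2|$ with its case list.

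Granting the displayed orthogonality, the interpolation property is immediate: for $\jb,\ib\in\Upsilon_n^\circ$ one obtains $\ell_{\jb,n}^\circ(\ib/n) = \delta_{\jb,\ib}$, hence $\CL_n f(\ib/n) = f(\ib/n)$, and $\CL_n f\in\CTS_n$ since each $\ell_{\jb,n}^\circ\in\CTS_n$. Uniqueness follows once one checks $|\Upsilon_n^{\dag\circ}| = |\Upsilon_n^\circ|$, so that the number of interpolation conditions equals $\dim\CTS_n$ and the interpolation matrix — whose nonsingularity is precisely the orthogonality above — is invertible. The step that needs genuine care, as opposed to bookkeeping, is the reduction in the second paragraph: one must track exactly which nodes of $\KK_n^*$ lie on the $\CA_2$-walls and what the weights $c_\jb^{(n)}$ are there, and match them via $\kb\mapsto\wh\kb$ against both the orbit sizes and the triangle weights so as to land precisely on the exceptional values $1,2,\tfrac32,3$; the congruence conditions modulo $3n$ built into $\KK_n^*$ make the case $n\equiv1\pmod3$ the delicate one.
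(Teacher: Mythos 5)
Your overall strategy --- apply $\CP^-$ to the near-interpolation operator of Theorem \ref{prop:tH-interpo} and exploit Theorem \ref{thm:HH2ip} together with the $\CA_2$-invariance of $\KK_n^*$ --- matches the paper's, but you execute the decisive step by a different route. The paper never passes through your node-side orthogonality of the $\TS_\kb$ over $\Upsilon_n^\circ$: it writes $\ell_{\jb,n}^\circ(\tb) = 6\,\CP^-\Phi_n(\tb-\tfrac{\jb}{n})$, obtains the kernel formula with $\wh\lambda_\kb^{(n)} = c_{\wh\kb}^{(3n)}|\kb\A_2|$ emerging directly from folding the frequency sum over $\KK_n^{\dag*}$ into $\CA_2$-orbits (using $\TS_{\kb\sigma}=\sign(\sigma)\TS_\kb$ and $c_{\wh{\kb\sigma}}^{(3n)}=c_{\wh\kb}^{(3n)}$), and then evaluates at the nodes by applying $\CP^-$ in the node variable to the delta relation \eqref{k=jmod} already proved in Theorem \ref{prop:tH-interpo}: for $\jb,\lb\in\Upsilon_n^\circ$ the congruence $\wh{\lb\sigma}\equiv\wh{\jb}\pmod{3n}$ forces $\sigma=\mathrm{id}$ and $\lb=\jb$, so $\CP^-_{\lb}\Phi_n(\tfrac{\lb-\jb}{n})=\tfrac16\delta_{\jb,\lb}$. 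This bypasses both your displayed orthogonality and the cardinality identity $|\Upsilon_n^{\dag\circ}|=|\Upsilon_n^\circ|$. In your argument that identity is not needed only for uniqueness: row-orthonormality of a rectangular matrix does not imply column-orthonormality, so your ``immediate'' deduction of $\ell_{\jb,n}^\circ(\ib/n)=\delta_{\jb,\ib}$ from the node-sum orthogonality silently uses the squareness you defer to the end. That is repairable by reordering, but it must be said.

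The genuine gap is your mechanism for the constants. You assert that $\wh\lambda_\kb^{(n)}$ is the reciprocal of $\|\TS_\kb\|_\Omega^2$ and that this continuous norm is, up to a universal constant, the orbit size. That cannot be right: whenever $\TS_\kb\not\equiv 0$ the stabilizer of $\kb$ in $\CA_2$ is trivial (a nontrivial stabilizer contains a reflection, whose sign kills the anti-invariant sum --- indeed $\TS_{\mathbf 0}\equiv 0$ and $\TS_{(\frac n3,\frac n3,-\frac{2n}{3})}\equiv 0$, so two of the exceptional cases are vacuous for the sine kernel and only matter for the cosine analogue), and then $\|\TS_\kb\|_\Omega^2$ takes the same value for every such $\kb$ by orthonormality of the $\phi_{\kb\sigma}$; a constant cannot produce the list $6,3,\tfrac32$. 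What actually varies is the \emph{discrete} norm $\la\TS_\kb,\TS_\kb\ra_n^*$, whose deviation from the generic value is governed by which nontrivial $\sigma\in\CA_2$ satisfy $\wh{\kb\sigma}\equiv\wh{\kb}\pmod{3n}$ (aliasing of boundary frequencies), equivalently by the weight $c_{\wh\kb}^{(3n)}$ and the way the orbit of $\kb$ meets $\KK_n^{\dag*}$. So the weights must be extracted from the frequency-side orbit decomposition, as the paper does, or from that congruence count --- not from continuous $L^2$ norms.
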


\begin{proof}
By the definition of $\CP^{\pm}$ and $\TS_{\kb}$,
\begin{align*}
 \CP^-_{\tb} \Phi(\tb - \tfrac{\jb}{n}) &=  \frac{1}{n^2}\sum_{\kb\in \KK^{\dag*}_n } c_{\wh \kb}^{(3n)}\overline{\phi}_{\kb}(\tfrac{\jb}{n}) \CP^-_{\tb}\phi_{\kb}(\tb)
 =\frac{i}{n^2}\sum_{\kb\in \KK^{\dag*}_n } c_{\wh \kb}^{(3n)}\overline{\phi}_{\kb}(\tfrac{\jb}{n}) \TS_{\kb}(\tb)
\\ & =\frac{i}{n^2} \sum_{\kb\in \Upsilon^{\dag*}_n }\sum_{\sigma\in \A_2}
  c_{\wh {\kb\sigma}}^{(3n)}\overline{\phi}_{\kb\sigma}(\tfrac{\jb}{n})
     \TS_{\kb\sigma}(\tb) \frac{|\kb \A_2|}{|\A_2|}
\\ & =\frac{i}{n^2} \sum_{\kb\in \Upsilon^{\dag*}_n }  c_{\wh {\kb}}^{(3n)}|\kb \A_2| \TS_{\kb}(\tb)  \frac{1}{|\A_2|}\sum_{\sigma\in \A_2}  \sign(\sigma) \overline{\phi}_{\kb\sigma}(\tfrac{\jb}{n})
\\ & =\frac{1}{n^2} \sum_{\kb\in \Upsilon^{\dag*}_n }  c_{\wh {\kb}}^{(3n)}|\kb \A_2| \TS_{\kb}(\tb)  \overline{\TS}_{\kb}(\tfrac{\jb}{n})
= \frac{1}{n^2} \sum_{\kb\in \Upsilon^{\dag*}_n } {\wh \lambda}_{\kb}^{(n)} \TS_{\kb}(\tb)  \overline{\TS}_{\kb}(\tfrac{\jb}{n}).
\end{align*}
Now, for $\jb,\lb\in \Upsilon_n^{\circ}$,
\begin{align*}
 & \CP^-_{\lb}  \Phi(\tb - \tfrac{\jb}{n}) =
 \CP^-_{\lb}\frac{1}{n^2}\sum_{\kb\in \KK^{\dag*}_n } c_{\wh \kb}^{(3n)}
   \phi_{\wh\kb}(\tfrac{\wh\lb}{3n}) \overline{\phi_{\wh\kb}(\tfrac{\wh\jb}{3n})}
= \CP^-_{\lb}\frac{1}{n^2}\sum_{\kb\in \KK^{\dag*}_n } c_{\wh \kb}^{(3n)} \phi_{\tfrac{\wh\lb}{3}}(\tfrac{\wh\kb}{n})
\overline{\phi_{\tfrac{\wh\jb}{3}}(\tfrac{\wh\kb}{n})} \\
& =
\sum_{\sigma\in \A_2}  \frac{\rho(\sigma)}{n^2}\sum_{\kb\in \KK^{\dag*}_n } c_{\wh \kb}^{(3n)} \phi_{\tfrac{\wh{\lb\sigma}}{3}}(\tfrac{\wh\kb}{n})
\overline{\phi_{\tfrac{\wh\jb}{3}}(\tfrac{\wh\kb}{n})}
= \frac16
\sum_{\sigma\in \A_2} \frac{\rho(\sigma)}{n^2}\sum_{\ib\in \KK^{*}_n } c_{\ib}^{(n)} \phi_{\tfrac{\wh{\lb\sigma}}{3}}(\tfrac{\ib}{n})
\overline{\phi_{\tfrac{\wh\jb}{3}}(\tfrac{\ib}{n})}
\\ & = \frac16
\sum_{\sigma\in \A_2}\rho(\sigma) (\phi_{\tfrac{\wh{\lb\sigma}}{3}}, \phi_{\tfrac{\wh\jb}{3}})_n^*
= 
\frac16\sum_{\sigma\in \A_2}\rho(\sigma)\delta^n_{\jb,\lb\sigma} =\frac16 \delta^n_{\jb,\lb},
 \end{align*}
where $\delta^n_{\jb , \kb} $ equals $1$ if $ \frac{\jb}{n}\equiv \frac{\kb}{n} \pmod 3$, and is $0$ otherwise. This completes the proof.
\end{proof}

In fact, $ \ell_{\jb,n}^{\circ} (\tb) = 6 \CP^- \Phi_n(\tb - \tfrac{\jb}{n})$, where $\CP^-$ acts
on the variable $\tb$, from which the proof reduces to verify formula of $\ell_{\jb}^\circ$
given in the theorem, using the periodicity and the symmetry. Applying now $\CP^+$ to $\CI_n f$, we obtain similarly the
trigonometric interpolation on $\Upsilon_n$ in $\Delta$.

\begin{thm}\label{prop:interpo-Dela}
For $n \ge 0$ and $f\in C(\Delta)$ define
\begin{equation*}
  \CL_n^* f(\tb) :=  \sum_{\jb \in \Upsilon_n} f (\tfrac{\jb}{n}) \ell_{\jb,n}(\tb), \quad
     \ell_{\jb,n}(\tb) := \frac{\lambda^{(n)}_{\jb}}{n^2} \sum_{\kb \in \Upsilon_n^{\dag}}
       \lambda_{\kb}^{(n)} \TC_{\kb}(\tb) \overline{ \TC_{\kb}(\tfrac{\jb}{n}) },
\end{equation*}
where $\lambda_\jb^{(n)}$ are defined in \eqref{cuba-HHT2}. Then $\CL_n^*$ is the
unique function in $\CTC_n$ that satisfies
$\CL_n^* f(\tfrac{\jb}{n}) = f(\tfrac{\jb}{n})$, $\jb \in \Upsilon_n.$
\end{thm}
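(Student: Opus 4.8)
The plan is to repeat the argument of Theorem~\ref{prop:1st-interpo-Dela} with the symmetrizer $\CP^+$ in place of the antisymmetrizer $\CP^-$. The key identity to establish first is
\[
 \ell_{\jb,n}(\tb) = \lambda_\jb^{(n)}\,\CP^+_{\tb}\,\Phi_n\!\big(\tb-\tfrac{\jb}{n}\big),
\]
with $\Phi_n$ the near-interpolation kernel of Theorem~\ref{prop:tH-interpo} and $\CP^+$ acting on $\tb$. This comes out of the same computation as in the proof of Theorem~\ref{prop:1st-interpo-Dela}: expand $\Phi_n(\tb-\tfrac{\jb}{n})=\frac1{n^2}\sum_{\kb\in\KK_n^{\dag*}}c_{\wh\kb}^{(3n)}\phi_\kb(\tb)\overline{\phi_\kb(\tfrac{\jb}{n})}$, apply $\CP^+_\tb$ so that each $\CP^+_\tb\phi_\kb(\tb)$ becomes a multiple of $\TC_\kb(\tb)$, and fold the sum over $\KK_n^{\dag*}$ onto the fundamental region $\Upsilon_n^{\dag}$, the $\A_2$-orbit of a representative $\kb$ producing the combinatorial factor $|\kb\A_2|$; collecting the weights gives the coefficient formula for $\ell_{\jb,n}$ in the statement, exactly as the analogous bookkeeping in Theorem~\ref{prop:1st-interpo-Dela} produced $\ell_{\jb,n}^{\circ}$. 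The one structural change from the $\CP^-$ case is that all orbit elements now enter with the same sign, so that the reflection-fixed $\kb$ on $\partial\Upsilon_n^{\dag}$ are not killed and the sum genuinely runs over all of $\Upsilon_n^{\dag}$; in particular $\ell_{\jb,n}\in\sspan\{\TC_\kb:\kb\in\Upsilon_n^{\dag}\}=\CTC_n$, so $\CL_n^* f\in\CTC_n$, which is the first assertion.

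For the interpolation conditions one evaluates $\ell_{\jb,n}$ at a node $\tb=\tfrac{\lb}{n}$, $\lb\in\Upsilon_n$; using linearity of $\CP^+$ and of the $\A_2$-action,
\[
 \ell_{\jb,n}\!\big(\tfrac{\lb}{n}\big)=\lambda_\jb^{(n)}\sum_{\sigma\in\A_2}\Phi_n\!\big(\tfrac{\lb\sigma-\jb}{n}\big).
\]
Each $\lb\sigma$ lies in the $\A_2$-invariant set $\KK_n^*$, so by the cardinal identity \eqref{k=jmod} every summand is $0$ or $1$, and the sum counts the $\sigma\in\A_2$ for which $\lb\sigma$ is congruent to $\jb$ in the sense of \eqref{k=jmod}. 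Since $\Upsilon_n$ is a fundamental region for the $\A_2$-action, this can occur only when $\jb=\lb$, and then the count equals the order of the $\A_2$-stabilizer of $\jb$, namely $6/\lambda_\jb^{(n)}$ ($=1$, $2$ or $6$ at an interior, edge or vertex node of $\Delta$). With the same overall normalization that turned the analogous $\CP^-$ sum into $\tfrac16\delta^n_{\jb,\lb}$ in the proof of Theorem~\ref{prop:1st-interpo-Dela}, the outer weight $\lambda_\jb^{(n)}$, which takes precisely the values $6$, $3$, $1$, cancels this stabilizer count, so $\ell_{\jb,n}(\tfrac{\lb}{n})=\delta_{\jb,\lb}$ for all $\jb,\lb\in\Upsilon_n$ and hence $\CL_n^* f(\tfrac{\jb}{n})=f(\tfrac{\jb}{n})$. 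Uniqueness is then a dimension count: the functions $\TC_\kb$, $\kb\in\Upsilon_n^{\dag}$, are linearly independent, so $\dim\CTC_n=|\Upsilon_n^{\dag}|$; and $|\Upsilon_n^{\dag}|=|\Upsilon_n|$ because the map $\kb\mapsto\wh\kb$ of Proposition~\ref{prop:H*H}, which satisfies $\wh{\wh\kb}=-3\kb$, identifies $\KK_n^{\dag*}$ with $\KK_n^*$ and intertwines the two copies of the $\A_2$-action, hence their orbit spaces $\Upsilon_n^{\dag}$ and $\Upsilon_n$. Since the cardinal functions $\ell_{\jb,n}$ lie in $\CTC_n$, they form a basis, and $\CL_n^*$ is the unique element of $\CTC_n$ with the prescribed nodal values.

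I expect the main obstacle to be the boundary of $\Upsilon_n$, equivalently the boundary of the triangle $\Delta$: there the $\A_2$-orbit of a node collapses, and the mod-$3$ congruence built into $\KK_n^*$ and $\Upsilon_n$ interacts with the period lattice of the exponentials $\phi_\kb$, so one must check with care --- using the explicit formula for the kernel $\Phi_n$ from Theorem~\ref{prop:tH-interpo} --- that the three weights $6,3,1$ are exactly what restores the cardinal property at the interior, edge and vertex nodes. This is precisely the point at which the theorem goes beyond the interior-only statement of Theorem~\ref{prop:1st-interpo-Dela}, and it is where the bulk of the work lies.
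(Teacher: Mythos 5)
Your proposal follows exactly the route the paper intends: the paper gives no separate proof of this theorem, saying only that it is obtained by ``applying $\CP^+$ to $\CI_n f$'' in the same way Theorem~\ref{prop:1st-interpo-Dela} was obtained with $\CP^-$, and that is precisely what you carry out, including the key identity $\ell_{\jb,n}(\tb)=\lambda_\jb^{(n)}\,\CP^+_\tb\Phi_n(\tb-\tfrac{\jb}{n})$ (the symmetric analogue of the paper's remark that $\ell_{\jb,n}^\circ(\tb)=6\,\CP^-\Phi_n(\tb-\tfrac{\jb}{n})$) and the cancellation of the boundary stabilizer count against the outer weight $\lambda_\jb^{(n)}$. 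Your approach matches the paper's.
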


For $n$ being a multiple of $3$, we can deduce a compact formula for  $\ell_{\jb,n}^\circ(\tb)$
and $\ell_{\jb,n}(\tb)$ from that of \eqref{tPhi_n}. The interpolation points of $\CL_n^*f$
are depicted in Figure 8. From the explicit formula of $\Phi_n^*$ in \eqref{tPhi_n}, it is not
difficult to prove, following proof of Theorem 3.6 in \cite{LSX1},  that the uniform operator
norm (Lebesgue constant) of $\CI_{n}^*f$ in Theorem \ref{prop:tH-interpo} satisfies
$\|\CI_{n}^*\|_\infty \le c (\log n)^2$ for $n \equiv 0 \pmod 3$; in other words,
$\|\CI_{n}^* f \|_\infty \le c \|f\|_\infty$,
where $\|\cdot\|_\infty$ denotes the uniform norm over $\Omega$. Since $\CL_n f$ and
$\CL_n^*f$ are obtained by applying $\CP^\pm$ to $\CI_n^*f$, it follows immediately that
$$
   \|\CL_{n} \|_\infty \le c (\log n)^2  \quad{and} \quad  \|\CL_{n}^* \|_\infty \le c (\log n)^2,
$$
where $n \equiv 0 \pmod 3$ and the uniform norm is taken over the triangle $\Delta$.

\subsubsection{Fast Fourier transform} Comparing to the Hexagon-Hexagon case, the
set up in the present subsection has at least one advantage if we consider the fast Fourier transform. The discrete Fourier transform of a function $f$ periodic in $H$ is
$$
 \CI_n f(\tb)  = \sum_{\kb \in \KK_n^\dag} \wh f_\kb \phi_\kb(\tb), \quad\hbox{where}\quad
      \wh f_\kb = \la f, \phi_\kb \ra_n = \frac{1}{n^2}\sum_{\jb\in \KK_n} f(\tfrac{\jb}{n})
         \e^{-\frac{2i\pi}{3} \kb\cdot \jb  }.
$$
For $\la \cdot, \cdot \ra_n$ in \eqref{ipHH2}, we show that $\wh f_\kb$ can be evaluated as
in the classical discrete Fourier transform on a square. For this purpose, it is more
convenient to use Cartesian coordinates. Let $k = (k_1,k_2)$ corresponds to $\kb$.
Then, by Theorem \ref{thm:df},
$$
  \wh f_{\kb}  =   ( f, \phi_{\kb})_n =  \la f, \phi_{k} \ra_N
       =  \frac{1}{n^2} \sum_{j\in \Lambda_{N}} f( n^{-1} H j) \e^{2i\pi n^{-1} k\cdot j},
$$
since $\phi_k(x) = \e^{2 \pi i H^{-1} k \cdot x}$ and $B = n H^{-\tr}$ implies that
$\phi_k(B^{-\tr}j) =  \e^{2i\pi n^{-1} k\cdot j}$. The homogeneous coordinates
of $H j$ is $(2j_1-j_2,2j_2-j_1,-j_1-j_2)$,
so that
\begin{align*}
  \hat f_{\kb}& =\frac{1}{n^2} \sum_{j\in \Lambda_{N}} f( \tfrac{2j_1-j_2}{n},
     \tfrac{2j_2-j_1}{n}, \tfrac{-j_1-j_2}{n}) \e^{2i\pi n^{-1} k\cdot j  } \\
 &=\frac{1}{n^2} \sum_{0\le j_1,j_2<n} f( \tfrac{2j_1-j_2}{n}, \tfrac{2j_2-j_1}{n}, \tfrac{-j_1-j_2}{n})    \e^{2i\pi n^{-1} (k_1 j_1+k_2 j_2)}.
\end{align*}
This states that the discrete Fourier transform coincides, up to a reordering,  with
the classical discrete Fourier transform on a rectangle. Figure \ref{reorderPoint} shows
the set $\Lambda_N$ and its reordering in rectangular coordinates.
\begin{figure}[h]
\centering
\begin{minipage}{0.37\textwidth}\centering \includegraphics[width=1\textwidth]{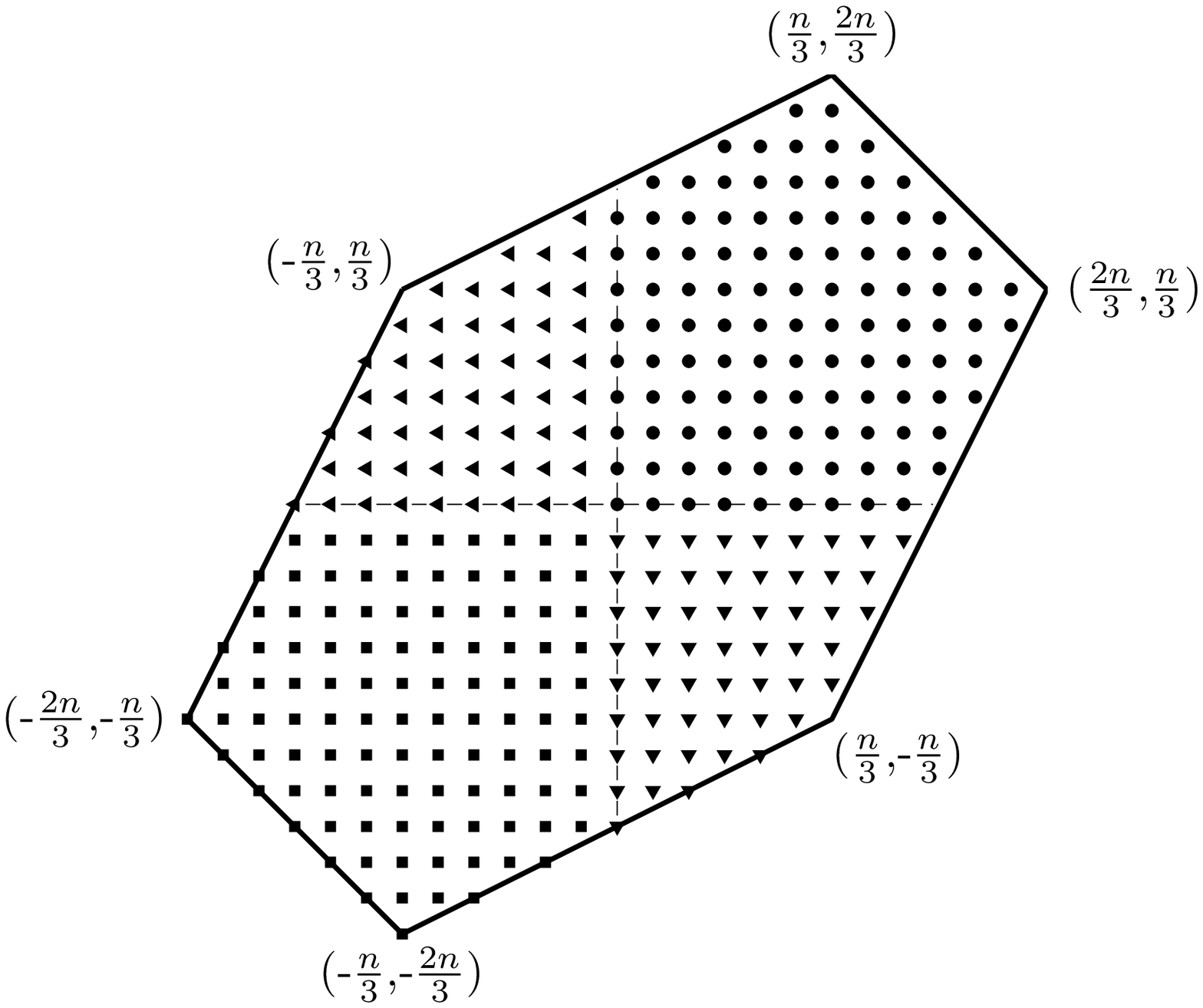}
\end{minipage}
\qquad
\begin{minipage}{0.3\textwidth}\centering \includegraphics[width=1\textwidth]{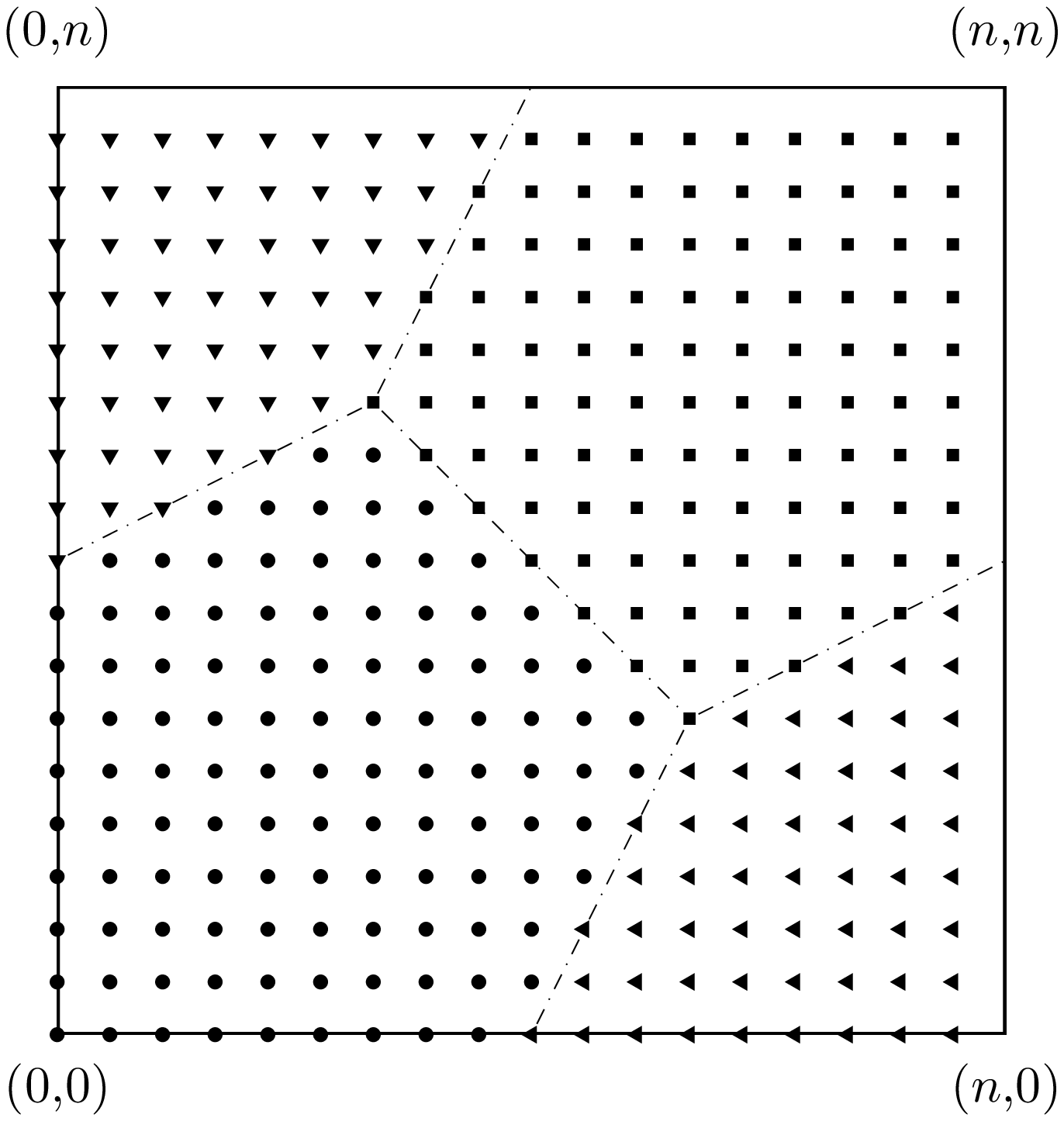}
\end{minipage}
\caption{The index set $\Lambda_N$ (left) and its reordering (right).}
\label{reorderPoint}
\end{figure}
Similarly, recalling $\CH_N =\{\phi_\kb: \kb \in \KK_n^\dag\}$; the index set $\KK_n^\dag$
in rectangular coordinates can also be reordered, so that $\CH_N$ becomes the product
space in rectangular domain. In particular, this allows us to apply the classical FFT to
evaluate $\wh f_\kb$.

\subsection{Other possibilities} There are other possible choices of lattices in our
general frame of discrete Fourier analysis. For example, we can consider
$A = H^{-\tr}$ and $B = n H$, for which the integral domain $\Omega_A$ will be
the hexagon in Figure 5. It is easy to see that the index sets $\Lambda_N$ and
$\Lambda_N^\dag$ in this case are $\KK_n^\dag$ and $\KK_n$ in the previous
subsection, that is, their roles are interchanged. This case, however, does not seem
to lead to interesting new result; the integral domain in the Stage 3 for the generalized
cosine and sine functions will be the quadrilateral in Figure 7.

One obvious question is if we can choose one lattice tiling $\RR^2$ with square
or rhombus and choose the other lattice tiling $\RR^2$ with hexagon. The answer is
negative if we try to use regular hexagon, since the matrix $H$ contains $\sqrt{3}$ and
the requirement $N= B^\tr A$ having all integer entries cannot be satisfied. We can,
however, use other hexagon domains. For example, we can choose either
$$
    H_1 = \left [ \begin{matrix} 1 & 1 \\ -2 & 1 \end{matrix} \right]
       \quad \hbox{or} \quad
   H_2 = \left [ \begin{matrix} 1 & 2 \\ -1 & 1 \end{matrix} \right].
$$
Both lattices $H_1 \ZZ^2$ and $H_2 \ZZ^2$ tile $\RR^2$.
\begin{figure}[h]
\centering
\begin{minipage}{0.3\textwidth}\centering \includegraphics[width=1\textwidth]{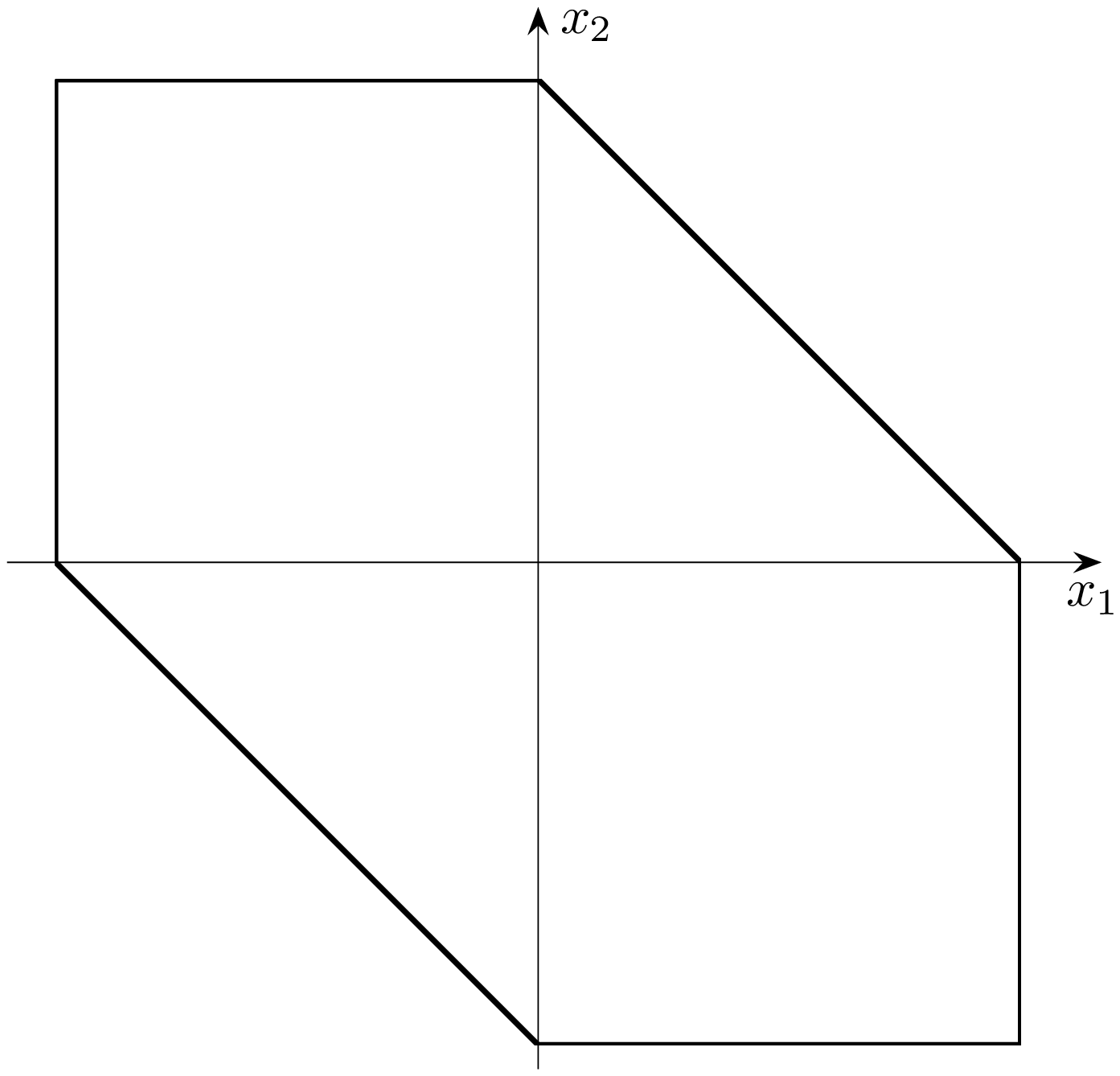}
\end{minipage}
\qquad
\begin{minipage}{0.3\textwidth}\centering \includegraphics[width=1\textwidth]{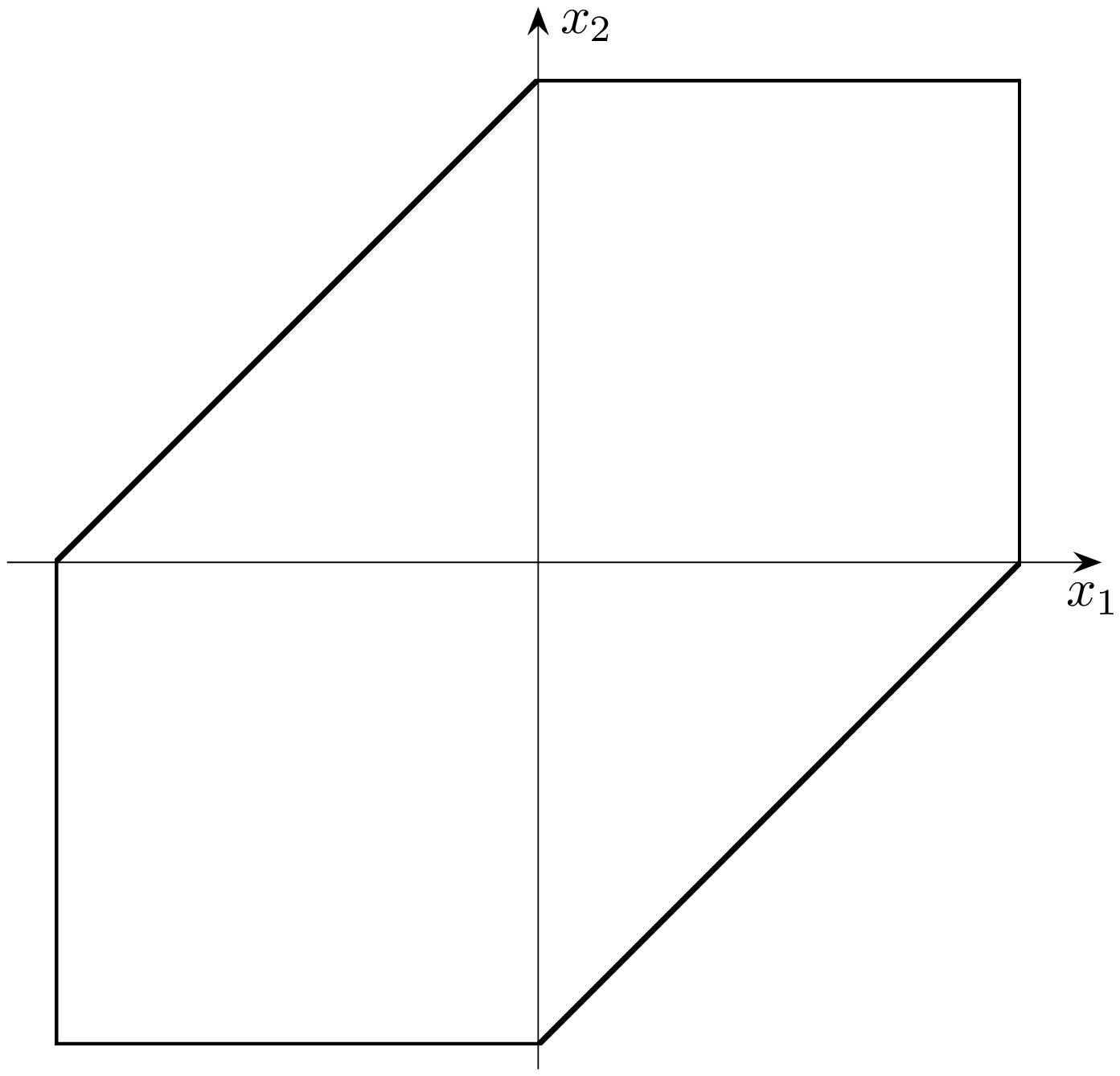}
\end{minipage}
\caption{The fundamental domains of $H_1\ZZ^2$ (left) and $H_2 \ZZ^2$ (right).}
\label{H1-H2}
\end{figure}
Their fundamental domains are depicted in Figure \ref{H1-H2}. The general result in Section 2.1
can be applied to develop a discrete Fourier analysis using either $H_1$ or $H_2$ and
a lattice that tiles $\RR^2$ with either square or rhombus, since the requirement that
$N =B^\tr A$ has integer entries can be readily attained using, say $A= H_1$ or $H_2$
and $B = I$ or $R$. Comparing to the regular
hexagon, the hexagons in Figure 9 possess far less symmetry. The lack of symmetry
means that we will not be able to carry the program outlined in Section 2.2 to Stage 3
and Stage 4, whereas the results in Stage 1 and Stage 2 can be derived from the
general theory straightforwardly. Hence, we will not pursuit the matter any further.

\end{document}